\renewcommand{\Re}{\operatorname{Re}}
\renewcommand{\Im}{\operatorname{Im}}
\DeclareMathOperator{\sgn}{sgn}
\renewcommand{\arctan}{\operatorname{arctg}}
\DeclarePairedDelimiterX\innerp[2]{\langle}{\rangle}{#1, #2}
\DeclarePairedDelimiterX\ccint[2]{\lbrack}{\rbrack}{#1, #2}
\newtheorem{theorem}{Theorem}
\newtheorem{lemma}{Lemma}
\newtheorem{claim}{Claim}
\newtheorem{corollary}{Corollary}
\author{Petar Melentijevi\'c}
\thanks{The author is partially supported by MPNTR grant 174017, Serbia}
\title[Hollenbeck-Verbitsky conjecture]{Hollenbeck-Verbitsky conjecture on best constant inequalities for analytic and co-analytic projections}
\subjclass[2010]{Primary 35B30, Secondary 35J05}
\keywords{Sharp inequalities, Riesz projection, Best constants, Analytic martingales, Half-space multipliers}
\begin{document}
\begin{abstract}
  In this paper we address the problem of finding the best constants in inequalities of the form:
   $$ \|\big(|P_+f|^s+|P_-f|^s\big)^{\frac{1}{s}}\|_{L^p({\mathbb{T}})}\leq A_{p,s} \|f\|_{L^p({\mathbb{T}})},$$
  where $P_+f$ and $P_-f$ denote analytic and co-analytic projection of a complex-valued function $f \in L^p({\mathbb{T}}),$ for $p \geq 2$ and all $s>0$, thus proving Hollenbeck-Verbitsky conjecture from \cite{HV.OTAA}. We also prove the same inequalities for\\ $1<p\leq\frac{4}{3}$ and $s\leq \sec^2\frac{\pi}{2p}$ and confirm that $s=\sec^2\frac{\pi}{2p}$ is the sharp cutoff for $s.$ The proof uses a method of plurisubharmonic minorants and an approach of proving the appropriate "elementary" inequalities that seems to be new in this topic. 
  
  We show that this result implies best constants inequalities for the projections on the real-line and half-space multipliers on $\mathbb{R}^n$ and an analog for analytic martingales. A remark on an isoperimetric inequality for harmonic functions in the unit disk is also given. 
 \end{abstract}
\maketitle

\section{\textbf{Introduction}}

\subsection{$H^p$ spaces and analytic and co-analytic projections}
Let $\mathbf{D}=\{z\in \mathbf{C}:|z|<1\}$ be the unit disk  and let $\mathbf{T}=\{z\in \mathbf{C}:|z|=1\}$  be the  unit circle. For  $0<p\le\infty$
we denote by $L^p(\mathbf{T})$ the  Lebesgue space  on $\mathbf{T}$. The space $H^p(\mathbf{T})$ contains  all $f \in  L^p(\mathbf{T})$   for
which all negative Fourier coefficients are equal to zero, i.e.,
\begin{equation*}
\hat{f}(n)  =  \frac{1}{2\pi}\int_{0}^{2\pi} f(e^{it})e^{-int} dt = 0\quad \text{for every  integer}\  n<0.
\end{equation*}
The Riesz projection  operator $P_+:L^p(\mathbf{T})\to H^p(\mathbf{T})$,  and the  co-analytic projection operator  $P_- = I- P_+$,  where $I$ is the
identity operator on  $L^p(\mathbf{T})$ and $\quad f(\zeta)= \sum_{n =   -\infty}^{+\infty} \hat{f}(n) \zeta^n,$ are defined as
\begin{equation}
\label{projekcije}
P_+f (\zeta)  =  \sum_{n=0}^{+\infty} \hat{f}(n)  \zeta^n\quad\text{and}\quad   P_- f(\zeta)=\sum_{n= - \infty}^{-1} \hat{f}(n) \zeta^n.
\end{equation}

For a function  $f$ on $\mathbf{D}$ and $r\in (0,1)$ we denote by $f_r$ the function  $f_r (\zeta) =  f(r\zeta)$, $\zeta\in\overline{\mathbf{D}}$. The
harmonic Hardy space $h^p$, $0<p\le \infty$, consists of all  harmonic complex-valued  functions  $f$  on $\mathbf{D}$ for  which the integral mean
\begin{equation*}
M_p(f,r)  = \left\{ \int_\mathbf{T} |f_r(\zeta)|^p\frac{|d\zeta|}{2\pi}\right\}^{\frac 1p}
\end{equation*}
remains bounded as $r$ approaches $1$.  Since $|f|^p$, $1\le p<\infty$ is  subharmonic on  $\mathbf{D}$, the integral mean $M_p(f,r)$ is increasing in
$r$. The norm on $h^p$ in this case is given by
\begin{equation*}
\|f\|_p = \lim_{r\rightarrow 1} M_p(f,r).
\end{equation*}
The analytic Hardy space $H^p$ is the subspace of $h^p$ that contains analytic functions. For the  theory of  Hardy spaces in the unit disk we refer to
classical books \cite{DUREN.BOOK, GARNETT.BOOK, KOOSIS} and  to the more recent Pavlovi\'{c} book \cite{PAVLOVIC.BOOK} for new results on the Hardy  space theory in the  unit disc. Since $|f|^p$ is subharmonic for every $0<p<\infty$ if $f$ is holomorphic, the (quasi)norm on the Hardy space $H^p$ may be introduced as
\begin{equation*}
\|f\|_p  = \lim_{ r \rightarrow 1}  M_p(f,r).
\end{equation*}
For $p=\infty$ the space $H^\infty$ contains all bounded analytic functions in  the unit disc.

It  is well known that for $f\in H^p$ the radial boundary value $f^{\ast}(\zeta)= \lim_{r\rightarrow 1-} f(r\zeta)$ exists for almost every
$\zeta\in \mathbf{T}$,  $f^\ast$ belongs to the space  $H^p(\mathbf{T})$, and we have the isometry relation $\|f^{\ast}\|_{L^p(\mathbf{T})} =
\|f\|_{H^p}$. Moreover, given $\varphi \in H^p (\mathbf{T})$, the Cauchy (and the Poisson)  extension  of it  gives  a function  $f$ in  $H^p$ such
that   $f^\ast (\zeta ) = \varphi(\zeta)$ for almost every $\zeta\in \mathbf{T}$. Therefore, one may identify the spaces $H^p(\mathbf{T})$  and  $H^p$
via the   isometry $f\rightarrow f^\ast$. Let us say, also, that after identifying these two spaces, the Riesz operator  $P_+$
may be represented as the Cauchy integral (with density $f^\ast$)
\begin{equation*}
P_+ f (z) =  \frac{1}{2\pi i} \int_{\mathbf{T}} \frac{f^\ast(\zeta)}{\zeta - z} d\zeta, \quad z \in \mathbf{D}.
\end{equation*}

The Riesz operator is not bounded on $L^1(\mathbf{T})$; actually there does not exist a bounded projection of $L^1(\mathbf{T})$ onto
$H^1(\mathbf{T})$. This was shown by Newman (\cite{NEWMAN}) and generalized by Rudin (\cite{RUDIN}). From the Parseval's identity it follows that the operator $P_+$ is an
orthogonal projection of  $L^2(\mathbf{T})$ onto $H^2(\mathbf{T})$. Therefore, the norm of $P_+$ as an operator from  ${L^2(\mathbf{T})}$
onto   $H^2(\mathbf{T})$ is equal to  $1$. If $p=\infty$, then the Riesz projection operator  $P_+$  is also  unbounded.
The question for $p\ne 2$ is more subtle. It is a classical result that the Riesz  projection $P_+$ is bounded if it is considered as an operator
on $L^p(\mathbf{T})$  for every $1<p<\infty$, \cite{RIESZ}. There are many proofs of this fact; for example, a proof that uses Hardy-Stein identity can be found in \cite{PAVLOVIC.BOOK}. However, finding the accurate norm of this operator has turned out to be a more delicate problem. 
Some partial results on the norm of the Riesz operator $P_+$  were obtained by Gohberg and Krupnik. It was done in the case when $p=2^n$,  $n\in\mathbf{N}$). They also made a conjecture on its value that has been proved by Hollenbeck and Verbitsky, \cite{HV.JFA}. Many other results that preceded the final solution of Gohberg-Krupnik conjecture can be found in \cite{GOHBERGKRUPNIK,GK2, GK3,KRUPNIK,KRVERB1,KRVERB2,PAPADOPOULOS,VERBITSKY.ISSLED}. Best constants inequalities concerning Hilbert transform, Riesz projection or some other singular integral operators using purely analytic or probabilistic methods are considered by many authors, let us mention also \cite{BAERNSTEIN, BANUELOSWANG, BURKHOLDER, DAVIS, DINGGRAFAKOSZHU, HKV, JANAKIRAMAN, OSEKOWSKI,  PICHORIDES.STUDIA, TOMASZEWSKI}.

In \cite{HV.JFA}, Hollenbeck and Verbitsky proved slightly stronger result than that stated in Gohberg - Krupnik conjecture:
\begin{equation}
\label{eq:HV12}
\|\max \{ |P_ + (f)|,  |P_- (f)|\} \|_{L^p  (\mathbf{T})}\le \frac  1 {\sin  \frac  \pi p} \|f\|_{L^p (\mathbf{T})},  \quad  f\in {L^p (\mathbf{T})}.
\end{equation}
The proof uses the method of plurisubharmonic minorants. In fact, they proved the "elementary" inequality:
\begin{equation}
\label{eq:HVEL1}
\max\{|z|^p,|w|^p\}\leq \frac{1}{\sin^p\frac{\pi}{p}}|z+\overline{w}|^p-E(z,w),
\end{equation}
where $E(z,w)=\frac{2|\cos\frac{\pi}{p}|^{1-\frac{p}{2}}}{\sin\frac{\pi}{p}}\Re(zw)^{\frac{p}{2}}, 1<p\leq 2$ is plurisubharmonic. Then, integrating this inequality with $z=P_+f(\zeta)$ and $w=\overline{P_-f(\zeta)}$ over $\zeta \in \mathbb{T}$ the desired inequality is established. This implies the result on the norm of the Riesz projection on the circle for $1<p\leq 2,$ while the conclusion for $p>2$ follows from the duality arguments. The paper \cite{HV.JFA} also contains the analogous result for the real line and certain weighted $L^p$ spaces on $\mathbb{R}.$

In \cite{HV.OTAA}, an inequality similar to \eqref{eq:HVEL1} is proved for $p>2,$ with the different choice of $E(z,w).$ This concluded the proof of the inequality \eqref{eq:HV12} for $p>2.$ 

\subsection{Statement of the problem and recent progress} In the same paper, \cite{HV.OTAA}, the authors posed a problem of finding the optimal constant $A _{p,s }$ in the inequality
\begin{equation}
\label{eq:HVconj}
\|  ( |P_ +  f | ^s  +  |P_-  f |^s)^{\frac {1}{s}}\|_{L^p (\mathbf{T})}\le A_{p,s } \|f\|_{L^p (\mathbf{T})}
\end{equation}
for $ f\in {L^p (\mathbf{T})}, \,    1<p<\infty, \, 0<s<\infty.$ They conjectured that
\begin{equation}
\label{eq:C12}
A _ {p,s } = \frac {2^{\frac{1}{s}}}{2\cos\frac{\pi}{2p}},  \quad   1<p<2,\  0<s<\sec^2\frac{\pi} {2p}
\end{equation}
and
\begin{equation}
\label{eq:C2b}
A _{p,s } = \frac{2^{\frac {1}{s}}}{2\sin\frac{\pi}{2p}},  \quad  2<p<\infty, \ 0<s<\csc^2\frac \pi {2p }.
\end{equation}
In terms of the above notation, Hollenbeck-Verbitsky results from \cite{HV.JFA} and \cite{HV.OTAA} reads as 
$$A_{p,\infty}  = \frac{1}{\sin\frac{\pi}{p}},\quad   1<p<\infty.$$ 
It is an easy consequence of Verbitsky's result from \cite{VERBITSKY.ISSLED} that the best constant in the inequality (\ref{eq:HVconj}) for real-valued functions is given by (\ref{eq:C12}) and (\ref{eq:C2b}) for all positive $s.$ However, for $s> \max\{\sec^2\frac{\pi}{2p},\csc^2\frac{\pi}{2p}\}$ in the complex case the same does not hold, i.e. $A_{p,s}$ depends in a more substantial way on $s.$ 

Let us describe the motivation of introducing the conjectural value $A_{p,s}$ and the existence of cutoff's for $s.$ For the family $f_{\gamma}(z)=\alpha\Re g_{\gamma}(z)+\imath \beta\Im g_{\gamma}(z)$, where $g_{\gamma}(z)=\bigl(\frac{1+z}{1-z}\bigr)^{\gamma}$, we easily find
$$P_+f_{\gamma}(z)=\frac{\alpha+\beta}{2}g_{\gamma}(z)+\frac{\alpha-\beta}{2}\quad\text{and}\quad P_-f_{\gamma}(z)=\frac{\alpha-\beta}{2}\big(\overline{g_{\gamma}}(z)-1\big).$$ 
Thus,
$$\frac{\|\big(|P_+f_{\gamma}|^s+|P_-f_{\gamma}|^s\big)^{\frac{1}{s}}\|_{L^p({\mathbb{T}})}}{\|f_{\gamma}\|_{L^p({\mathbb{T}})}}= \frac{\|\big(|\frac{\alpha+\beta}{2}g_{\gamma}+\frac{\alpha-\beta}{2}|^s+|\frac{\alpha-\beta}{2}\overline{g_{\gamma}}-\frac{\alpha-\beta}{2}|^s\big)^{\frac{1}{s}}\|_{L^p({\mathbb{T}})}}{\|\frac{\alpha+\beta}{2}g_{\gamma}+\frac{\alpha-\beta}{2}\overline{g_{\gamma}}\|_{L^p({\mathbb{T}})}}.$$

For $s\geq 1,$ by Minkowski inequalities for sequences and integrals, respectively, we find:
\begin{align*}
&\bigg\|\bigg(\bigg|\frac{\alpha+\beta}{2}g_{\gamma}+\frac{\alpha-\beta}{2}\bigg|^s+\bigg|\frac{\alpha-\beta}{2}\overline{g_{\gamma}}-\frac{\alpha-\beta}{2}\bigg|^s\bigg)^{\frac{1}{s}}\bigg\|_{L^p({\mathbb{T}})}\\
&\geq \bigg\|\bigg(\bigg|\frac{\alpha+\beta}{2}g_{\gamma}\bigg|^s+\bigg|\frac{\alpha-\beta}{2}\overline{g_{\gamma}}\bigg|^s\bigg)^{\frac{1}{s}}-2^{\frac{1}{s}-1}\big|\alpha-\beta\big|\bigg\|_{L^p({\mathbb{T}})}\\
&\geq \bigg\|\bigg(\bigg|\frac{\alpha+\beta}{2}g_{\gamma}\bigg|^s+\bigg|\frac{\alpha-\beta}{2}\overline{g_{\gamma}}\bigg|^s\bigg)^{\frac{1}{s}}\bigg\|_{L^p({\mathbb{T}})}-2^{\frac{1}{s}-1}\big|\alpha-\beta\big|.
\end{align*}

For $s<1,$ we have $\frac{p}{s}>1,$ hence, using the inequality $|x\pm y|^s\geq |x|^s-|y|^s$ and Minkowski integral inequality with exponent $\frac{p}{s}$ we get:
\begin{align*}
&\bigg\|\bigg(\bigg|\frac{\alpha+\beta}{2}g_{\gamma}+\frac{\alpha-\beta}{2}\bigg|^s+\bigg|\frac{\alpha-\beta}{2}\overline{g_{\gamma}}-\frac{\alpha-\beta}{2}\bigg|^s\bigg)^{\frac{1}{s}}\bigg\|_{L^p({\mathbb{T}})}\\
&\geq \bigg\|\bigg|\frac{\alpha+\beta}{2}g_{\gamma}\bigg|^s+\bigg|\frac{\alpha-\beta}{2}\overline{g_{\gamma}}\bigg|^s-2^{1-s}\big|\alpha-\beta\big|^s\bigg\|_{L^{\frac{p}{s}}({\mathbb{T}})}^{\frac{1}{s}}\\
&\geq \bigg[\bigg\|\bigg|\frac{\alpha+\beta}{2}g_{\gamma}\bigg|^s+\bigg|\frac{\alpha-\beta}{2}\overline{g_{\gamma}}\bigg|^s\bigg\|_{L^{\frac{p}{s}}({\mathbb{T}})}-2^{1-s}\big|\alpha-\beta\big|^s\bigg]^{\frac{1}{s}}.
\end{align*}

Similarly we obtain the analogous inequalities with the sign $"\leq"$ changing the signs in front of $2^{\frac{1}{s}-1}|\alpha-\beta|$ and $2^{1-s}|\alpha-\beta|^s$ from the minus to the plus sign. 
By the fact that $\lim_{\gamma\rightarrow \frac{1}{p}}\|g_{\gamma}\|_{L^p(\mathbb{T})}=+\infty,$ we arrive at
$$\lim_{\gamma\rightarrow \frac{1}{p}}\frac{\|\big(|P_+f_{\gamma}|^s+|P_-f_{\gamma}|^s\big)^{\frac{1}{s}}\|_{L^p({\mathbb{T}})}}{\|f_{\gamma}\|_{L^p({\mathbb{T}})}}=\lim_{\gamma\rightarrow \frac{1}{p}}\frac{\|\big(|\frac{\alpha+\beta}{2}g_{\gamma}|^s+|\frac{\alpha-\beta}{2}\overline{g_{\gamma}}|^s\big)^{\frac{1}{s}}\|_{L^p({\mathbb{T}})}}{\|\frac{\alpha+\beta}{2}g_{\gamma}+\frac{\alpha-\beta}{2}\overline{g_{\gamma}}\|_{L^p({\mathbb{T}})}},$$
which is, by $|\Im g_{\gamma}|=\tan\frac{\pi\gamma}{2}\Re g_{\gamma},$ equal to $\frac{\big(|\alpha+\beta|^s+|\alpha-\beta|^s\big)^{\frac{1}{s}}}{2\big(\alpha^2\cos^2\frac{\pi}{2p}+\beta^2\sin^2\frac{\pi}{2p}\big)^\frac{1}{2}}.$\\
Putting $\alpha+\beta=1, \alpha-\beta=e^{-y},$ $y\geq 0,$ we get:
\begin{equation}
\label{sharpness}
A_{p,s}\geq \max_{y\geq 0}\frac{2^\frac{1}{s}\cosh^{\frac{1}{s}}\frac{s y}{2}}{\big(\cosh y\pm \cos\frac{\pi}{p}\big)^{\frac{1}{2}}},
\end{equation}
where we choose the plus sign for $1<p\leq 2$ and the minus sign for $p\geq 2.$ More details on these estimates from below and calculations of this maximum can be found in \cite{MELENTIJEVICMARKOVIC} or later on in Lemma \ref{lemaoKfunkciji}. Here, as we will see,  the range of $0<s\leq \max\{\sec^2\frac{\pi}{2p},\csc^2\frac{\pi}{2p}\}$, i.e. equalities (\ref{eq:C12}) and (\ref{eq:C2b}) corresponds to the case when the above maximum is achieved for $y=0.$ 

Also, note that, since $\max\{a,b\}=\lim_{s \rightarrow \infty}\big(a^s+b^s\big)^{\frac{1}{s}},$ for $a,b>0,$ it is expected that $\lim_{s \rightarrow +\infty}A_{p.s}=A_{p,\infty}$ for all $p>1.$ This is proved in \cite{MELENTIJEVICMARKOVIC} using \eqref{eq:HVEL1} and its analog for $p>2$ with explicit calculations of the lower bound for $A_{p,s}$.  

In the paper \cite{KALAJ.TAMS}, Kalaj proved that $A_{p,2}=\frac{1}{\sqrt{2}\sin\frac{\pi}{2p'}},$ where $p'=\max\{p,\frac{p}{p-1}\},$ confirming Hollenbeck-Verbitsky conjecture in case $s=2$. He also proved the reverse estimate to \eqref{eq:HVconj} for $s=2$ i.e. the best constant inequality where the left- and right-hand sides of \eqref{eq:HVconj} are replaced by each other. The application of these inequalities to the isoperimetric inequality for harmonic functions are also given. We will see at the end of this paper that better estimates can be proved without using them. 

The author of this paper found an approach of proving the appropriate "elementary" inequalities used in the proof of \eqref{eq:HVconj} and its reversed counterparts for $s=2$, in \cite{MELENTIJEVIC.PHD}, which significantly shorten their proofs. The idea is to analyze stationary points of the appropriate function inside a rectangle and reducing the proof to checking the non-negativity of this function on its sides. It has turned out that this method works for $s\leq \max\{p,\frac{p}{p-1}\}$ and in \cite{MELENTIJEVICMARKOVIC} the conjecture is proved for $p\geq 2$ and $s \leq p$ and $1<p \leq \frac{5}{4}$ and $s=4.$ Also, $s\leq \max\{p,\frac{p}{p-1}\}$ is the maximum range of $s$ where this approach can be applied.

Using the monotonicity of the power means, it is easy to see that if \eqref{eq:C12} or \eqref{eq:C2b} is proved for some $s_0 \leq \max\{\sec^2\frac{\pi}{2p},\csc^2\frac{\pi}{2p}\},$ then it holds for $s \in (0,s_0]$(see  also \cite{MELENTIJEVICMARKOVIC}). Hence to confirm \eqref{eq:C12} and \eqref{eq:C2b} it is enough to prove it for the case $s= \max\{\sec^2\frac{\pi}{2p},\csc^2\frac{\pi}{2p}\}.$ This is the largest value for which $A_{p,s}$ has the form as \eqref{eq:C12} and \eqref{eq:C2b}, depending on whether $1<p\leq 2$ of $p>2,$ by \cite{MELENTIJEVICMARKOVIC}. We will call it critical value and, by this observation, concentrate only on the range $s\geq\max\{\sec^2\frac{\pi}{2p},\csc^2\frac{\pi}{2p}\}.$

\section{\textbf{Main results and corollaries}}

In this paper, we will prove \eqref{eq:C12} for $1<p\leq \frac{4}{3}$ and \eqref{eq:C2b} for all $p\geq 2.$ Therefore, the main result reads as follows:
\begin{theorem}
	\label{HVcp}
	Let $p \geq 2$ and $s>0$, $f \in L^p(\mathbb{T})$ and $P_+f$ and $P_-f$ its analytic and co-analytic projections. Then the following sharp inequality holds
	\begin{equation}
	\label{eq:opstanorma2}
	\|\big(|P_+f|^s+|P_-f|^s\big)^{\frac{1}{s}}\|_{L^p({\mathbb{T}})}\leq A_{p,s} \|f\|_{L^p({\mathbb{T}})},
	\end{equation}
	where $A_{p,s}=2^{\frac{1}{s}}\max\limits_{y \in \mathbb{R}^+}\frac{\cosh^{\frac{1}{s}}\frac{s y}{2}}{\sqrt{\cosh y-\cos\frac{\pi}{p}}}.$
	Specially, for $s\leq \csc^2\frac{\pi}{2p}$ we have:
	\begin{equation}
	\label{eq:norm2b}
	\|\big(|P_+f|^s+|P_-f|^s\big)^{\frac{1}{s}}\|_{L^p({\mathbb{T}})}\leq \frac{2^{\frac{1}{s}}}{2\sin\frac{\pi}{2p}} \|f\|_{L^p({\mathbb{T}})}.
	\end{equation}
	
	If $1<p\leq\frac{4}{3}$ and $s\leq\sec^2\frac{\pi}{2p}$, then we have:
	\begin{equation}
	\label{eq:norm12}
	\|\big(|P_+f|^s+|P_-f|^s\big)^{\frac{1}{s}}\|_{L^p({\mathbb{T}})}\leq \frac{2^{\frac{1}{s}}}{2\cos\frac{\pi}{2p}} \|f\|_{L^p({\mathbb{T}})}.
	\end{equation}	
\end{theorem}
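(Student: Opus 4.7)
My plan is to follow the plurisubharmonic minorant method of Hollenbeck–Verbitsky, adapted to the $L^s$-aggregation on the left. The target is a pointwise "elementary" inequality on $\mathbb{C}^2$ of the form
\begin{equation*}
(|z|^s + |w|^s)^{p/s} \leq A_{p,s}^p\,|z + \bar w|^p - E(z, w),
\end{equation*}
with $E$ plurisubharmonic and $E(z, 0) \geq 0$. Substituting $z = P_+ f(\zeta)$, $w = \overline{P_- f(\zeta)}$ (so $z + \bar w = f$), the composition $E(P_+ f, \overline{P_- f})$ is subharmonic on $\mathbb{D}$ because $P_+ f$ and $\overline{P_- f}$ extend holomorphically into $\mathbb{D}$. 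The sub-mean-value property then forces $\int_{\mathbb{T}} E \geq E(\hat f(0), 0) \geq 0$, and integrating the elementary inequality over $\mathbb{T}$ yields the theorem.

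I would next reduce to the critical exponent $s_0 = \csc^2(\pi/(2p))$ (resp.\ $\sec^2(\pi/(2p))$) using monotonicity of power means, so that the closed-form constants \eqref{eq:C12}, \eqref{eq:C2b} for $s \leq s_0$ follow from the case $s = s_0$; the extension to $s \geq s_0$ under the full max-formula constant is governed by the auxiliary Lemma \ref{lemaoKfunkciji} that pins down where the maximum of $K_s(y) = 2^{1/s}\cosh^{1/s}(sy/2)/\sqrt{\cosh y - \cos(\pi/p)}$ is attained. For the elementary inequality itself I would exploit $p$-homogeneity to compactify and the rotational symmetry $(z, w) \mapsto (e^{i\alpha}z, e^{-i\alpha}w)$, which preserves both $|z + \bar w|$ and $zw$, to cut the ambient dimension down to three real variables, say $|z|$, $|w|$, and $\arg(zw)$. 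The natural ansatz for $E$, modeled on \cite{HV.JFA,HV.OTAA}, is $E(z, w) = c_{p,s}\,\Re(zw)^{p/2}$ with an appropriate branch: pluriharmonic, vanishing at $w = 0$, and canonically matched to the extremal family from the sharpness example \eqref{sharpness}.

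The main obstacle is verifying the elementary inequality all the way up to the sharp cutoff $s_0$. Following the rectangle-and-stationary-point strategy from \cite{MELENTIJEVIC.PHD,MELENTIJEVICMARKOVIC}, one locates the interior critical points of the three-parameter deficit function and reduces positivity to a boundary verification. That strategy was previously pushed through only for $s \leq \max\{p, p/(p-1)\}$, which falls short of $s_0$; the novel ingredient promised in the abstract must close the gap, likely by a more refined boundary analysis or by augmenting $E$ with additional pluriharmonic correction terms calibrated to the extremal family. The delicate point is that at $s = s_0$ no slack is permissible: equality must be attained along the extremal family underlying \eqref{sharpness}, so the minorant and the elementary inequality must be tuned with precision, and the boundary verification will hinge on sharp trigonometric identities linking the remaining angular variable to $\cos(\pi/p)$. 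Once the elementary inequality is in hand, sharpness is immediate from \eqref{sharpness}.
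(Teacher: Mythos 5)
Your overall architecture — pointwise plurisubharmonic minorant, composition with $(P_+f, \overline{P_-f})$, sub-mean-value property, reduction to the critical $s_0$ by power-mean monotonicity, and then a real-variable elementary inequality — correctly matches the paper's strategy, and you are right that the paper's novelty lies in getting the elementary inequality all the way to $s_0$. However, your ansatz for the minorant has a genuine flaw for the range that actually constitutes Theorem~\ref{HVcp}, namely $p \geq 2$. The minorant $E(z,w) = c_{p,s}\,\Re(zw)^{p/2}$ is only used in the paper for $1 < p \leq \frac{4}{3}$ (inequality \eqref{eq:main12}). For $p \geq 2$ the paper instead uses Verbitsky's subharmonic function $v_p$, giving $E(z,w) = (rR)^{p/2} D_{p,s}\, v_p\bigl(\tfrac{u+t}{2}\bigr)$ with $v_p$ defined piecewise (equal to $-\cos(p(\tfrac{\pi}{2}-|t|))$ near $|t| = \tfrac{\pi}{2}$ but switching to a $\max$ of two cosines for $|t| \leq \tfrac{\pi}{2}-\tfrac{\pi}{p}$); see \eqref{eq:opsta2b}. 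This is not an "augmentation" of $\Re(zw)^{p/2}$ by correction terms — it is a structurally different pluriharmonic-type minorant, and $\Re(zw)^{p/2}$ alone cannot dominate the deficit when $p > 2$ because $\cos(pt/2)$ changes sign inside the relevant angular range. Moreover, for $s > \csc^2\frac{\pi}{2p}$ (which is still part of the stated theorem) the constant $A_{p,s}$ no longer equals $\tfrac{2^{1/s}}{2\sin(\pi/(2p))}$, and the minorant coefficients $C_{p,s}, D_{p,s}$ are calibrated via the function $K(y)$ so that equality occurs at the interior maximizer $\tilde{y} > 0$ rather than at $y = 0$, which your reduction-to-critical-$s_0$ step does not address.

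The second, larger gap is that you do not describe the mechanism that actually closes the elementary inequality at and beyond the critical $s$, and this is the core of the paper. After the standard reductions ($w=1$, $r = e^{-y}$, replacing $t$ by $\pi - t$), one faces a two-variable inequality $\Phi(y,t) \geq 0$ on a rectangle. The paper's device is to define, for each fixed $t$, the implicit function $y_p(t)$ solving $\partial_y\Phi(y_p(t),t) = 0$, then prove that $t \mapsto \Phi(y_p(t),t)$ is monotone. Showing that $y_p$ is well-defined and that this monotonicity holds reduces to two independent estimates: a trigonometric inequality for $\tfrac{\sin(tp/2)}{\sin t}$ (Lemma~\ref{odnossinusa}) and a family of hyperbolic inequalities of the type $G(y) = \tfrac{\cosh^{p/s-1}(sy/2)\sinh(sy/2)}{\sinh y\,\cosh^{p/2-1}y} \leq \tfrac{s}{2}$ on an appropriate $y$-interval bounded by $\cosh^{-1}(1/\cos\tfrac{\pi}{p})$ (Lemma~\ref{ocenaipsilona4b} and Section~5). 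These are proved via Taylor-coefficient sign analysis and Lemma~\ref{koeficijentimonotonost}. None of this appears in your plan; "a more refined boundary analysis" does not locate the decisive step, which is the $y$-direction implicitization and the resulting transfer from an angular inequality to a hyperbolic one. Without at least identifying this mechanism, the proposal is an outline of the setup rather than a proof.
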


The sharpness of these constants is clear from (\ref{sharpness}). Also, note that the main result from \cite{HV.OTAA} can be inferred from our Theorem 1 and hence, by duality the value of the exact norm of Riesz projection from \cite{HV.JFA}.

Using main results from this theorem as granted, we will now mention several corollaries of them. 
\begin{corollary}
	\label{corollary1}
	Let $f$ be an $L^p(\mathbb{T})$ function and $\tilde{f}(\zeta)=-\imath \sum_{n=-\infty}^{+\infty}(\sgn n)\hat{f}(n)\zeta^n$ its harmonic conjugate. Then the following sharp estimate holds:
	\begin{equation}
	\label{fiftilda}
	 \bigg\|\bigg(\bigg|\frac{f+\imath \tilde{f}}{2}\bigg|^s+\bigg|\frac{f-\imath \tilde{f}}{2}\bigg|^s\bigg)^{\frac{1}{s}}\bigg\|_{L^p(\mathbb{T})} \leq A_{p,s} \|f\|_{L^p(\mathbb{T})}
	\end{equation}
	with the same constant $A_{p,s}$ for the values of $p$ and $s$ as in Theorem \ref{HVcp}.
	\end{corollary}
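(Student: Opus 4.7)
My plan starts from the direct Fourier computation: using the definition $\tilde f=-\imath\sum_n(\sgn n)\hat{f}(n)\zeta^n$, one finds
\[
\frac{f+\imath\tilde f}{2}=P_+f-\frac{\hat f(0)}{2},\qquad \frac{f-\imath\tilde f}{2}=P_-f+\frac{\hat f(0)}{2}.
\]
Thus the two quantities in \eqref{fiftilda} differ from $|P_\pm f|$ only in how the zero Fourier coefficient of $f$ is distributed. When $\hat f(0)=0$ the corollary is immediate from Theorem~\ref{HVcp}; moreover, since $\|g_\gamma\|_{L^p(\mathbb T)}\to\infty$ the finite shift $\pm\hat f(0)/2$ cannot affect the ratio in the limit, so the extremal family from \eqref{sharpness} witnesses sharpness for \eqref{fiftilda} as well, and $A_{p,s}$ is optimal. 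The substantive task is to absorb the constant shift on the upper-bound side without any loss in $A_{p,s}$.

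To do so I would introduce the ``dual'' projections $\tilde P_+f(\zeta):=\sum_{n\geq 1}\hat f(n)\zeta^n$ and $\tilde P_-f(\zeta):=\sum_{n\leq 0}\hat f(n)\zeta^n$, which place the zero mode on the anti-analytic side. A short calculation on $\mathbb T$ gives $\tilde P_\pm f=\overline{P_\mp(\bar f)}$, and hence Theorem~\ref{HVcp} applied to $\bar f$ and then conjugated (using $\|\bar f\|_{L^p}=\|f\|_{L^p}$) yields the identical inequality
\[
\bigl\|\bigl(|\tilde P_+f|^s+|\tilde P_-f|^s\bigr)^{1/s}\bigr\|_{L^p(\mathbb T)}\leq A_{p,s}\|f\|_{L^p(\mathbb T)}.
\]
The decisive identity is then that the conjugate-function splitting is exactly the arithmetic mean of the two projection splittings,
\[
\frac{f+\imath\tilde f}{2}=\frac{P_+f+\tilde P_+f}{2},\qquad \frac{f-\imath\tilde f}{2}=\frac{P_-f+\tilde P_-f}{2}.
\]

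For $s\geq 1$ I would then apply the triangle inequality in the $\ell^s(\mathbb C^2)$-norm to the sum $(P_+f,P_-f)/2+(\tilde P_+f,\tilde P_-f)/2$ and take $L^p$-norms via Minkowski; the two projection bounds combine to give precisely $A_{p,s}\|f\|_{L^p}$ on the right. For $0<s<1$ one is automatically inside the closed-form regime of Theorem~\ref{HVcp} (since $1\leq\csc^2\frac{\pi}{2p}$ for $p\geq 2$ and $1\leq\sec^2\frac{\pi}{2p}$ for $1<p\leq\tfrac43$), where $A_{p,s}=2^{1/s}/(2\sin\frac{\pi}{2p})$ or $A_{p,s}=2^{1/s}/(2\cos\frac{\pi}{2p})$; the monotonicity of power means, $(|a|^s+|b|^s)^{1/s}\leq 2^{1/s-1}(|a|+|b|)$, reduces this to the $s=1$ case already handled, and the constants match exactly because $2^{1/s-1}A_{p,1}=A_{p,s}$ in this closed-form regime. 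The main conceptual obstacle is precisely this awkward zero Fourier coefficient, which would spoil a direct identification of $(f\pm\imath\tilde f)/2$ with $P_\pm f$; the dual-projection averaging is what neutralises it at no cost in the constant.
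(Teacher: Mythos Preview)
Your argument is correct and takes a genuinely different route from the paper. The paper does not invoke Theorem~\ref{HVcp} as a black box; instead it goes back to the pointwise inequalities of Lemma~\ref{lemma1}, substituting the analytic pair $z=\tfrac{f+\imath\tilde f}{2}$ and $w=\overline{\tfrac{f-\imath\tilde f}{2}}$ (so that $z+\overline{w}=f$ exactly), and then integrates over $\mathbb{T}$ using the sub-mean-value property of the plurisubharmonic minorant $E(z,w)$. Because $zw$ is analytic with $zw(0)=|\hat f(0)|^{2}/4\geq 0$, this yields in one stroke a slightly stronger inequality carrying an additional non-negative term $2^{p/s-p}D_{p,s}|f(0)|^{p}|\cos\tfrac{p\pi}{2}|$ (respectively $2^{p/s-p}D_{p,s}|f(0)|^{p}$ for $1<p\leq\tfrac{4}{3}$) on the left-hand side. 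Your approach instead treats Theorem~\ref{HVcp} as given and absorbs the zero Fourier coefficient via the averaging identity $\tfrac{f\pm\imath\tilde f}{2}=\tfrac12(P_\pm f+\tilde P_\pm f)$ together with the conjugation symmetry $\tilde P_\pm f=\overline{P_\mp(\bar f)}$; the $\ell^{s}$ triangle inequality followed by Minkowski then gives the constant $A_{p,s}$ with no loss for $s\geq 1$, and your reduction $2^{1/s-1}A_{p,1}=A_{p,s}$ cleanly handles $0<s<1$. This is arguably more portable (it would transfer to any setting where the analogue of Theorem~\ref{HVcp} is known), at the cost of not recovering the extra $|f(0)|^{p}$ term. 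Your sharpness argument is essentially the paper's as well.
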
	
\begin{proof}
Using inequalities (\ref{eq:opsta2b}) and (\ref{eq:main12}) with $z=\frac{f+\imath \tilde{f}}{2}$ and $\overline{w}=\frac{f-\imath \tilde{f}}{2}$ and integrating over the unit circle:
$$\bigg\|\bigg(\bigg|\frac{f+\imath \tilde{f}}{2}\bigg|^s+\bigg|\frac{f-\imath \tilde{f}}{2}\bigg|^s\bigg)^{\frac{1}{s}}\bigg\|_{L^p(\mathbb{T})}^p+2^{\frac{p}{s}-p}D_{p,s}\big|f(0)\big|^p\big|\cos\frac{p\pi}{2}\big|  \leq  A_{p,s}^p \|f\|_{L^p(\mathbb{T})}^p, $$
for $p\geq 2$ and $s>0.$ Similarly, for $1<p\leq \frac{4}{3}$ and $s\leq \sec^2\frac{\pi}{2p}$
we get:
$$\bigg\|\bigg(\bigg|\frac{f+\imath \tilde{f}}{2}\bigg|^s+\bigg|\frac{f-\imath \tilde{f}}{2}\bigg|^s\bigg)^{\frac{1}{s}}\bigg\|_{L^p(\mathbb{T})}^p+2^{\frac{p}{s}-p}D_{p,s}\big|f(0)\big|^p  \leq  A_{p,s}^p \|f\|_{L^p(\mathbb{T})}^p. $$

To see that $A_{p,s}$ is sharp, we use the same family of functions $f_{\gamma}.$
Indeed, since $\frac{f_{\gamma}+\imath \tilde{f}_{\gamma}}{2}=\frac{\alpha+\beta}{2}g_{\gamma}$ and $\frac{f_{\gamma}-\imath \tilde{f}_{\gamma}}{2}=\frac{\alpha-\beta}{2}\overline{g_{\gamma}},$ from $|\Im g_{\gamma}|=\tan\frac{\pi\gamma}{2}\Re g_{\gamma}$ we obtain:
$$ \frac{\big\|\big(\big|\frac{f_{\gamma}+\imath \tilde{f}_{\gamma}}{2}\big|^s+\big|\frac{f_{\gamma}-\imath \tilde{f}_{\gamma}}{2}\big|^s\big)^{\frac{1}{s}}\big\|_{L^p(\mathbb{T})}}{\|f_{\gamma}\|_{L^p(\mathbb{T})}}= \frac{\|\big(|\frac{\alpha+\beta}{2}g_{\gamma}|^s+|\frac{\alpha-\beta}{2}\overline{g_{\gamma}}|^s\big)^{\frac{1}{s}}\|_{L^p({\mathbb{T}})}}{\|\frac{\alpha+\beta}{2}g_{\gamma}+\frac{\alpha-\beta}{2}\overline{g_{\gamma}}\|_{L^p({\mathbb{T}})}}.$$
Now the result follows by the arguments already used in the above considerations.
\end{proof}
Using the well-known trick of Zygmund (\cite{ZYGMUND}, Chapter XVI, Theorem 3.8.) and Corollary \ref{corollary1} in the same way as in the paper \cite{HV.JFA}, we obtain the real-line analog of Theorem \ref{HVcp}.
\begin{theorem}
\label{corollary2}
Let $f$ be an $L^p(\mathbb{R})$ function and $P_+f$ and $P_-f$ its analytic and co-analytic projections. Then the following sharp estimate holds:
\begin{equation}
\label{realni}
\|(|P_+f|^s+|P_-f|^s)^{\frac{1}{s}}\|_{L^p(\mathbb{R})} \leq A_{p,s} \|f\|_{L^p(\mathbb{R})}
\end{equation} 
with the same constant $A_{p,s}$ for the values of $p$ and $s$ as in Theorem \ref{HVcp}.	 
\end{theorem}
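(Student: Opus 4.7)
The plan is to apply Zygmund's classical dilation trick (\cite{ZYGMUND}, Ch.~XVI, Thm.~3.8) to transfer the circle inequality in Corollary \ref{corollary1} to the real line, exactly as done in \cite{HV.JFA} for the extremal case $s=\infty$. By density (and the $L^p$-boundedness of $P_\pm^{\mathbb{R}}$), it suffices to prove \eqref{realni} for $f\in C_c^\infty(\mathbb{R})$; fix such an $f$ with $\operatorname{supp} f\subset[-M,M]$. For every $R>M/\pi$ set
\[
g_R(e^{i\theta}) := f(R\theta),\qquad \theta\in(-\pi,\pi],
\]
so that the change of variable $\theta=x/R$ gives $\|g_R\|_{L^p(\mathbb{T})}^p=(2\pi R)^{-1}\|f\|_{L^p(\mathbb{R})}^p$.

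The essential point is to verify that the rescaled circle projections converge to the line projections: $R\cdot P_\pm^{\mathbb{T}} g_R(e^{ix/R})\to P_\pm^{\mathbb{R}} f(x)$ as $R\to\infty$, both pointwise a.e.\ and with a common $L^p$-dominant (e.g.\ the maximal Hilbert transform of $f$). This is a direct computation from the Cauchy-kernel representation of $P_+^{\mathbb{T}}$: after the substitutions $\theta=x/R,\ \phi=y/R$, the circle Cauchy kernel rescales to the upper half-plane Cauchy kernel $\tfrac{1}{i(y-x)}$ with error $O(R^{-1})$ on any compact set, which is precisely the kernel that defines $P_+^{\mathbb{R}}$ (and analogously for $P_-$). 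Dominated convergence then transfers this convergence to the sublinear combination $(|P_+^{\mathbb{T}} g_R|^s+|P_-^{\mathbb{T}} g_R|^s)^{1/s}$.

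Applying Corollary \ref{corollary1} to $g_R$, raising to the $p$-th power, performing the substitution $\theta=x/R$ on the left, and passing to the limit $R\to\infty$ produces \eqref{realni} with the same constant $A_{p,s}$. Sharpness is inherited either by transplanting the circle extremizers $f_\gamma$ from the introduction to $\mathbb{R}$ via the Cayley transform $z\mapsto i(1-z)/(1+z)$, or by repeating the lower-bound calculation \eqref{sharpness} directly on the real line using the half-plane analogue of $g_\gamma$. The main (and only nontrivial) obstacle is the rigorous justification of the passage to the limit $R\to\infty$, which is exactly what Zygmund's trick is designed for; the argument carries over unchanged from \cite{HV.JFA} because the sublinear quantity $(|P_+|^s+|P_-|^s)^{1/s}$ commutes with the dilation and pointwise limit in the same manner as the maximum treated there.
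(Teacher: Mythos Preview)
Your approach is essentially identical to the paper's, which merely invokes Zygmund's trick and Corollary~\ref{corollary1} ``in the same way as in \cite{HV.JFA}'' without supplying further detail. One minor slip: the prefactor $R$ in ``$R\cdot P_\pm^{\mathbb{T}} g_R(e^{ix/R})\to P_\pm^{\mathbb{R}} f(x)$'' should be dropped---after the substitution $\phi=y/R$ the conjugate-function kernel $\cot\tfrac{\theta-\phi}{2}\sim \tfrac{2R}{x-y}$ and the Jacobian $d\phi=R^{-1}\,dy$ cancel, so the conjugate of $g_R$ at $e^{ix/R}$ converges to $Hf(x)$ without any additional rescaling (and the same scaling on both sides of Corollary~\ref{corollary1} then yields \eqref{realni}).
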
	
Let $H_{\pm}^{j}:=\{x=(x_1,x_2,\dots,x_n)\in \mathbb{R}^n: x_j\gtrless 0\},$ $j=1,2,\dots,n$ and $P_{\pm}^j f=\mathcal{F}^{-1}\big(\chi_{H_{\pm}^{j}}\mathcal{F} f\big)$ the corresponding half-space multipliers. This operators can be seen as analytic and co-analytic projections with respect to $j-$th variable. Then by the argument from (\cite{STEIN}, Chapter 4, Section 4.2.3.),
we infer the next corollary:
\begin{corollary}
	\label{corollary3}
	Let $f$ be an $L^p(\mathbb{R}^n)$ function and $P^j_+f$ and $P^j_-f$ be the half-space multipliers corresponding to half-spaces $H_{+}^j$ and $H_{-}^j,$ respectively. Then the following sharp estimate holds:
	\begin{equation}
	\label{mnozioci}
	\|(|P^j_+f|^s+|P^j_-f|^s)^{\frac{1}{s}}\|_{L^p(\mathbb{R}^n)} \leq A_{p,s} \|f\|_{L^p(\mathbb{R}^n)}
	\end{equation} 
	with the same constant $A_{p,s}$ for the values of $p$ and $s$ as in Theorem \ref{HVcp}.	 	
\end{corollary}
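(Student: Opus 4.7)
The plan is to reduce the $n$-dimensional assertion to the one-dimensional Theorem \ref{corollary2} via a slice-wise (Fubini) argument, which is precisely the content of the method referenced in Stein's book. Without loss of generality one may take $j=n$, write $x=(x',x_n)$ with $x'\in\mathbb{R}^{n-1}$ and $x_n\in\mathbb{R}$, and exploit the fact that the multiplier $\chi_{H^{n}_{\pm}}(\xi)=\chi_{\{\pm\xi_n>0\}}$ depends only on the last frequency variable. Factoring the $n$-dimensional Fourier transform as the composition of the partial transform in $x'$ and the one-dimensional transform in $x_n$, one sees that for a.e.\ $x'\in\mathbb{R}^{n-1}$ the section $f(x',\cdot)$ belongs to $L^p(\mathbb{R})$ and
\begin{equation*}
(P^{n}_{\pm}f)(x',x_n)=P_{\pm}\bigl[f(x',\cdot)\bigr](x_n),
\end{equation*}
where on the right $P_{\pm}$ denotes the analytic/co-analytic projection on the real line.

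Once this slice identity is in hand, the corollary is immediate: applying Fubini, then Theorem \ref{corollary2} pointwise in $x'$, and Fubini once more gives
\begin{align*}
\bigl\|(|P^{n}_+f|^s+|P^{n}_-f|^s)^{1/s}\bigr\|_{L^p(\mathbb{R}^n)}^p
&=\int_{\mathbb{R}^{n-1}}\bigl\|(|P_+ f(x',\cdot)|^s+|P_- f(x',\cdot)|^s)^{1/s}\bigr\|_{L^p(\mathbb{R})}^p\,dx'\\
&\leq A_{p,s}^{p}\int_{\mathbb{R}^{n-1}}\|f(x',\cdot)\|_{L^p(\mathbb{R})}^p\,dx'
=A_{p,s}^{p}\|f\|_{L^p(\mathbb{R}^n)}^p.
\end{align*}
Sharpness of the constant is inherited from the real-line case: one tests against tensor products $f(x)=\varphi(x_n)\psi(x')$ with $\varphi$ a near-extremizer for Theorem \ref{corollary2} and any $\psi\in L^p(\mathbb{R}^{n-1})\setminus\{0\}$; the ratio decouples and reduces to the one-dimensional extremal ratio.

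The only step that requires care is justifying the slice identity for $P^{n}_{\pm}$ at the level of $L^p$ functions, since the Fourier definition $P_{\pm}^{j}f=\mathcal{F}^{-1}(\chi_{H_{\pm}^{j}}\mathcal{F}f)$ is initially in the distributional sense. The natural way is to check the identity first on the dense subclass of Schwartz functions, where the partial Fourier transform in $x'$ commutes with the one-dimensional transform in $x_n$ and with multiplication by $\chi_{\{\pm\xi_n>0\}}$, so that one obtains the slice formula pointwise; then pass to general $f\in L^p(\mathbb{R}^n)$ by the $L^p$-boundedness of $P^{n}_{\pm}$ and of the one-dimensional projections, together with Fubini. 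This density argument is the only non-trivial point, and it is exactly the standard ``product-structure'' reduction described in Stein.
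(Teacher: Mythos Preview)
Your proof is correct and is precisely the argument the paper has in mind: the paper does not give a self-contained proof but merely invokes ``the argument from (\cite{STEIN}, Chapter 4, Section 4.2.3.)'', which is exactly the Fubini/slice-wise reduction to the one-dimensional case (Theorem~\ref{corollary2}) that you spelled out. Your treatment of sharpness via tensor products and the density remark for the slice identity are the standard way to make that citation rigorous.
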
 
Finally, we give a probabilistic generalization of Theorem \ref{HVcp} which can be proved by using the inequalities from the next section with the help of Theorem 2.1 from \cite{OSEKOWSKI} as it is done in Theorem 4.2. in the same paper (\cite{OSEKOWSKI}):  
\begin{theorem}
	\label{MARTINGALE}
	If $(W,Z)$ is an analytic martingale with values in $\mathbb{C}\times \mathbb{C}$, such that $Z_0=0$ or $W_0=0,$ then there holds the sharp inequality:
	\begin{equation}
	\label{eq:martineq}
	\|\big(|W|^s+|Z|^s\big)^{\frac{1}{s}}\|_p \leq A_{p,s} \|W+\overline{Z}\|_p,
	\end{equation} 
	for $p\geq 2,$ $s>0$ and $A_{p,s}$ as defined after (\ref{eq:opstanorma2}).
	The similar holds for $1<p\leq \frac{4}{3}$ and $s\leq\sec^2\frac{\pi}{2p}$ with the constant as in (\ref{eq:norm12}).
\end{theorem}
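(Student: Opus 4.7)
The plan is to translate the proof of Theorem \ref{HVcp} into its probabilistic counterpart along the template of Theorem 4.2 in \cite{OSEKOWSKI}. The crucial ingredient, coming from the next section of this paper, is a pointwise elementary inequality of the schematic form
\[
\bigl(|z|^s+|w|^s\bigr)^{p/s}+D_{p,s}\,E(z,w)\leq A_{p,s}^p\,|z+\conj{w}|^p,\qquad z,w\in\setC,
\]
valid for $p\geq 2$ and $s>0$, where $E(z,w)$ is a nonnegative function of $(z,\conj{w})$, plurisubharmonic in the sense required by the analytic-martingale structure, that vanishes whenever $z=0$ or $w=0$; the analogous inequality in the range $1<p\leq\frac{4}{3}$, $s\leq\sec^2\frac{\pi}{2p}$ has the same structure with the constant from \eqref{eq:norm12}. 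These are precisely the inequalities that underlie Theorem \ref{HVcp} and Corollary \ref{corollary1}.

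First, I would apply this inequality pointwise at the terminal value of the analytic martingale, producing
\[
\bigl(|W_\infty|^s+|Z_\infty|^s\bigr)^{p/s}\leq A_{p,s}^p\,|W_\infty+\conj{Z_\infty}|^p-D_{p,s}\,E(W_\infty,Z_\infty).
\]
The next step is to invoke Theorem 2.1 of \cite{OSEKOWSKI}: for a $\setC\times\setC$-valued analytic martingale $(W_t,Z_t)$, composition with a function plurisubharmonic in $(z,\conj{w})$ (matching the analytic/co-analytic structure of $(W,\conj{Z})$) yields a submartingale. Applied to $E$, this gives $\mathbb{E}\,E(W_\infty,Z_\infty)\geq E(W_0,Z_0)$, and the hypothesis $W_0=0$ or $Z_0=0$, combined with the vanishing of $E$ on those axes, forces $E(W_0,Z_0)=0$, so $\mathbb{E}\,E(W_\infty,Z_\infty)\geq 0$.

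Taking expectations of the pointwise inequality and inserting this lower bound yields
\[
\mathbb{E}\bigl(|W_\infty|^s+|Z_\infty|^s\bigr)^{p/s}\leq A_{p,s}^p\,\mathbb{E}\,|W_\infty+\conj{Z_\infty}|^p,
\]
and raising to the $1/p$ power produces \eqref{eq:martineq}; the argument in the range $1<p\leq\frac{4}{3}$, $s\leq\sec^2\frac{\pi}{2p}$ is identical with the second family of elementary inequalities. Sharpness was already addressed in the harmonic-analytic setting through the extremal family $f_\gamma$, and transfers automatically. The main obstacle I expect is a bookkeeping one: one must match the conjugation convention in the elementary inequality (where $z+\conj{w}$ plays the role of $f$ from the Riesz-projection setting) with the notion of analyticity used for $(W,\conj{Z})$ in \cite{OSEKOWSKI}, so that plurisubharmonicity of $E$ in $(z,\conj{w})$ genuinely translates into $E(W_t,Z_t)$ being a submartingale; once this identification is made, the verification of the submartingale property is exactly the content of Osękowski's Theorem 2.1.
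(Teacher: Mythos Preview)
Your proposal is correct and follows exactly the approach the paper indicates: apply the elementary inequalities of Lemma~\ref{lemma1} pointwise, then use Theorem~2.1 of \cite{OSEKOWSKI} (as in Theorem~4.2 there) so that the plurisubharmonic minorant yields a submartingale whose expectation dominates its value at time zero, which vanishes under the hypothesis $W_0=0$ or $Z_0=0$. One small correction that does not affect the argument: the minorant $E(z,w)$ is not pointwise nonnegative (e.g.\ $v_p$ changes sign), but you never use nonnegativity---only plurisubharmonicity and the vanishing of $E$ on the axes $z=0$, $w=0$, which are the properties Lemma~\ref{lemma1} actually provides.
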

Inequalities with plurisubharmonic minorants that we prove here are very sharp and we use a  specific method of proving the appropriate "elementary" inequalities. In Section 3 we describe how proofs of Theorems 
\ref{HVcp} and \ref{MARTINGALE} reduce to these inequalities. Section 4 contains the main idea of their proof for $p\geq 2$ and $s\leq \csc^2\frac{\pi}{2p}$. As the reader can see it is based on the existence and differentiability of a certain implicit function and an inequality for hyperbolic functions. These most important steps in our proof are given in Sections 5 and 6, respectively. In Section 7 we give a proof of Theorem \ref{HVcp} for $p\geq 2$ and $s>\csc^2\frac{\pi}{2p}$, while in Section 8 we obtain the proof of Theorem \ref{HVcp} for $1<p\leq \frac{4}{3}.$ Finally, Section 9 is reserved for a remark on an isoperimetric inequality for harmonic functions.

\section{\textbf{Theorem \ref{HVcp} via inequalities with plurisubharmonic minorants}}

In this section we will state the appropriate "elementary" inequalities with plurisubharmonic minorants used in proof of our results. It can be seen that they are equal to that used in \cite{HV.OTAA} and \cite{HV.JFA}, up to a multiplicative constant. This problem of finding a good minorant is an example a plurisubharmonic obstacle problem and an interesting introduction to this topic is given in (\cite{VASYUNINVOLBERG}, pages 195-205). More precisely, we use the following lemma:
\begin{lemma}
	\label{lemma1}
	For $p \geq 2$ and $s>0$ there holds the inequality:
	\begin{equation}
		 \label{eq:opsta2b}
		\left(\frac{|z|^s + |w|^s}{2}\right)^{\frac{p}{s}}
		\leq C_{p,s}|z+\overline {w}|^p -(rR)^{\frac{p}{2}}
		D_{p,s}v_p \bigg(\frac{u+t}{2}\bigg),
		\end{equation}
		where $u = \arg z,$ $t= \arg w,$ $r=|z|, R=|w|$ and
		\begin{equation*} v_p (t)  =
		\begin{cases}
		-\cos(p(\frac {\pi}{2}- |t|)), & \mbox{if}\  \frac {\pi}{2}-\frac {\pi}{p}<|t|\le \frac {\pi}{2}; \\
		\max\{|\cos(p(\frac {\pi}{2}- t))|, |\cos(p(\frac{\pi}{2}+t))|\}, & \mbox{if}\ |t|\le \frac{\pi}{2}-\frac{\pi}{p}.
		\end{cases}
		\end{equation*}
		and  $v_p(t) = v_p (\pi - |t|)$ for $\frac \pi 2\le |t|\le \pi$. 
		Constants $C_{p,s}$ and $D_{p,s}$ are defined in Section 7.
		
		Specially, for $s\leq\csc^2\frac{\pi}{2p}$ we have:
		\begin{equation}
		\label{eq:main2b}
		\left(\frac{|z|^s + |w|^s}2\right)^{\frac{p}{s}}
		\leq \frac{|z+\overline {w}|^p }{2^p \sin^p\frac{\pi}{2p}} - (rR)^{\frac{p}{2}}
		\cot \frac{\pi}{2p}v_p \bigg(\frac{u+t}{2}\bigg),
		\end{equation}
%\begin{lemma}For $p\geq 4$ and $s=\frac{1}{\sin^2\frac{\pi}{2p}}$ there holds the inequality:
%	\begin{equation}
%	 \label{eq:main4b}
%	\left( \frac {|z|^s + |w|^s}2\right)^{\frac {p}{s}}
%	\le \frac{|z+i\overline {w}|^p }{2^p \sin^p\frac {\pi}{2p}} - (rR)^{\frac {p}{2}}
%	\cot \frac{\pi}{2p}v_p (u+t),
%	\end{equation}
%	where $u = \arg z,$ $t= \arg w,$ $r=|z|, R=|w|$ and
%	\begin{equation*} v_p (t)  =
%	\begin{cases}
%	-\cos\frac{p}{2}(\frac {\pi}{2}- |t|), & \mbox{if}\  \frac {\pi}{2}-\frac {2\pi}p<|t|\le \frac {\pi}{2}; \\
%	\max\{|\cos\frac {p}{2}(\frac {\pi}{2}- t)|, |\cos\frac{p}{2}(\frac {\pi}{2}+t)|\}, & \mbox{if}\ |t|\le \frac{\pi}{2}-\frac {2\pi}{p}.
%	\end{cases}
%	\end{equation*}
%	and  $v_p(t) = v_p (\pi - |t|)$ for $\frac \pi 2\le |t|\le \pi$,
%	while for $2\leq p \leq 4$ and $s=\frac{1}{\sin^2\frac{\pi}{2p}},$ we have:
%	\begin{equation}
%	\label{eq:main24} 
%	\left( \frac {|z|^s + |w|^s}2\right)^{\frac {p}{s}}
%	\le \frac{|z+\overline {w}|^p }{2^p \sin^p\frac {\pi}{2p}} - (rR)^{\frac {p}{2}}
%	\cot \frac{\pi}{2p}v_p (u+t),
%	\end{equation}
%	with $v_p(t)=-\cos\frac{p}{2}(\pi-|t|),$ $|t|\leq \pi.$
%  \end{lemma}
For   $s\leq\sec^2\frac{\pi}{2p}$ and  $1<p<\frac{4}{3}$ there holds
\begin{equation}
\label{eq:main12}
\left(\frac{|z|^s+|w|^s}{2}\right)^{\frac{p}{s}}\leq \frac{|z+\overline{w}|^p}{2^p\cos^p \frac{\pi}{2p} } -  \tan\frac{\pi}{2p}\Re (zw)^{\frac  {p}{2}},\quad z\in \mathbf{C},\, w\in \mathbf{C}.
\end{equation}
\end{lemma}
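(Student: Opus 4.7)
The strategy is to treat all three inequalities via a common reduction to a real-variable problem, then handle each case along the lines outlined in the paper's Sections 4--8. Writing $z=re^{iu}$ and $w=Re^{it}$, each right-hand side depends on the arguments $(u,t)$ only through the combination $\phi:=(u+t)/2$: indeed $|z+\overline{w}|^{2}=r^{2}+R^{2}+2rR\cos 2\phi$, $\Re(zw)^{p/2}=(rR)^{p/2}\cos(p\phi)$, and $v_{p}((u+t)/2)$ is by definition a function of $\phi$ alone. Since both sides are positively homogeneous of degree $p$ in $(r,R)$, we parametrize $r=\rho e^{y}$, $R=\rho e^{-y}$ with $y\ge 0$ and divide through by $\rho^{p}$; then $(r^{s}+R^{s})/2=\rho^{s}\cosh(sy)$ and $r^{2}+R^{2}+2rR\cos 2\phi=2\rho^{2}(\cosh 2y+\cos 2\phi)$, so each of the three inequalities becomes a statement $G_{p,s}(y,\phi)\ge 0$ on a rectangle in the $(y,\phi)$-plane.

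Consider first the central case (\ref{eq:main2b}) for $p\ge 2$ at the critical value $s=\csc^{2}\frac{\pi}{2p}$; the full range $s\le\csc^{2}\frac{\pi}{2p}$ then follows from monotonicity of power means as already noted in the introduction. The natural extremizer is $(y,\phi)=(0,\frac{\pi}{2}-\frac{\pi}{2p})$, at which $v_{p}(\phi)=-\cos\bigl(p(\frac{\pi}{2}-\phi)\bigr)=0$; the correction term therefore vanishes and the inequality is forced to be an equality at this point, which after a one-line computation pins down the coefficient $\cot\frac{\pi}{2p}$. To establish $G_{p,s}\ge 0$ globally, I would fix $\phi$ and study $y\mapsto G_{p,s}(y,\phi)$. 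Its stationary points in $y$ satisfy a transcendental equation from which the implicit function theorem (Section 5) extracts a $C^{1}$ curve $y=Y(\phi)$ passing through the extremizer. Substituting $Y$ back into $G_{p,s}$ collapses the problem to a one-variable inequality mixing $\cosh$ and $\cos$ of modulated arguments; this is the hyperbolic inequality of Section 6, which I expect to be the main technical obstacle.

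For $p\ge 2$ and $s>\csc^{2}\frac{\pi}{2p}$ the extremum in (\ref{sharpness}) is attained at some $y_{0}=y_{0}(p,s)>0$ rather than at $0$, which is precisely why (\ref{eq:main2b}) fails in that regime and forces the constants $C_{p,s},D_{p,s}$ of (\ref{eq:opsta2b}). I would pin these two constants down by the two matching conditions that $G_{p,s}$ and $\partial_{y}G_{p,s}$ vanish at $(y,\phi)=(y_{0},\frac{\pi}{2}-\frac{\pi}{2p})$, which uniquely determine both. The remaining argument proceeds as in the critical case, but with the implicit function curve anchored at the shifted base point $y=y_{0}$ instead of on the line $y=0$; this refinement is the content of Section 7.

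Finally, for $1<p\le 4/3$ and $s\le\sec^{2}\frac{\pi}{2p}$ the same reduction applies to (\ref{eq:main12}), with extremizer at $(y,\phi)=(0,\frac{\pi}{2p})$ and correction term $-\tan\frac{\pi}{2p}\cos(p\phi)$, and again monotonicity of power means reduces matters to $s=\sec^{2}\frac{\pi}{2p}$. The restriction $p\le 4/3$ enters via the hyperbolic one-variable inequality: for $p<2$ the sign of certain auxiliary quantities (which are controlled for free when $p\ge 2$) requires a sharper convexity estimate to govern the behavior near $\phi=0$ and $\phi=\pi/p$, and $p=4/3$ is the threshold past which that convexity ceases to hold. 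Across all three cases the serious work lies in the hyperbolic inequality of Section 6, together with the analytic verification that the implicit function curve produced in Section 5 extends across the full parameter rectangle.
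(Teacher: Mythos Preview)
Your strategy coincides with the paper's --- reduce by symmetry and homogeneity to $\Phi(y,t)\ge 0$, trap the $y$-minimum on an implicit curve $y=y_p(t)$, and prove $\Phi(y_p(t),t)$ is monotone in $t$ --- but two points need repair. The minor one concerns the constants in (\ref{eq:opsta2b}): at the extremal angle $t=\pi/p$ the term multiplied by $D_{p,s}$ is $\cos\frac{\pi}{2}=0$, and its $y$-derivative vanishes as well, so your two conditions $G=\partial_yG=0$ involve only $C_{p,s}$ (together they determine $C_{p,s}$ and the location $\tilde y$, as in Lemma~\ref{lemaoKfunkciji}). It is the \emph{tangential} matching $\partial_t\Phi(\tilde y,\pi/p)=0$ that fixes $D_{p,s}$; see (\ref{Dp}).

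The substantive gap is in the ``collapse to one variable''. Along the curve one has $\frac{d}{dt}\Phi(y_p(t),t)=\partial_t\Phi(y_p(t),t)$, so the desired sign amounts to
\[
C_{p,s}\bigl(\cosh y_p(t)-\cos t\bigr)^{\frac p2-1}\ \le\ D_{p,s}\,\frac{\sin\frac{tp}{2}}{\sin t},
\]
which still couples $y$ and $t$ through the implicitly defined $y_p(t)$; this is not yet a tractable one-variable statement. The ingredient you have not identified is the trigonometric bound $\frac{\sin(tp/2)}{\sin t}\ge E_p\cos^{\frac p2-1}t$ of Lemma~\ref{odnossinusa} (and its analogue Lemma~\ref{sinus12} for $1<p\le 4/3$): it converts the right-hand side into a power of $\cos t$, after which the relation $\partial_y\Phi=0$ eliminates $\cos t$ in favour of $y$. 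Only then does a \emph{purely hyperbolic} inequality emerge, namely $f(y)^{2/(p-2)}\le\cosh y$ with $f(y)=\tfrac{2}{s}\cosh^{p/s-1}(\tfrac{sy}{2})\sinh(\tfrac{sy}{2})/\sinh y$; this --- not a mixed $\cosh$/$\cos$ statement --- is what Sections~5 and 6 actually prove, and the well-definedness and monotonicity of $y_p(t)$ (Lemma~\ref{monot}) rest on the very same estimate. Without the sine-ratio lemma your scheme has no mechanism to decouple the two variables.
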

We will define $C_{p,s}$ and $D_{p,s}$ later. In this section, we will just use that they are both positive. 

It is well-known that if $U(re^{\imath t})=r^pv_p(t)$ is subharmonic in $\mathbb{D}$, then $E(z,w):=(rR)^{\frac{p}{2}}v_p(\frac{u+t}{2})$ where $u = \arg z,$ $t= \arg w,$ $r=|z|, R=|w|$ is plurisubharmonic function (this can be found in \cite{RANGE} or \cite{HORMANDER}). The proof of the subharmonicity of $U$ can be found in  \cite{VERBITSKY.ISSLED}, where this function was originally defined. The minorant from (\ref{eq:main12}) was also used by Hollenbeck-Verbitsky and Kalaj in \cite{HV.JFA} and \cite{KALAJ.TAMS}, respectively. Further, putting $z=P_+f(\zeta)$ and $w=\overline{P_-f(\zeta)}$ and integrating these inequalities, together with using
$$ \frac{1}{2\pi}\int_{\mathbf{T}}E(P_+f(\zeta),P_-f(\zeta))|d\zeta|\geq 0,$$
we infer the sharp norm estimates from Theorem \ref{HVcp} with 
\begin{equation}
\label{Aps}
A_{p,s}=2^{\frac{1}{s}}C_{p,s}^{\frac{1}{p}}.
\end{equation} In case of  $s\leq\max\{\csc^2\frac{\pi}{2p},\sec^2\frac{\pi}{2p}\}$ it takes the form given by (\ref{eq:C12}) and (\ref{eq:C2b}).

We will describe some reductions of these inequalities for the sake of completeness even it was done in \cite{HV.OTAA}. First, these inequalities are invariant under the transformation $(z,w)\rightarrowtail(ze^{\imath \varphi}, we^{-\imath \varphi})$, so we can set $w=1$ and rewrite it as:
\begin{equation}
\label{eq:FR2b}
-\bigg(\frac{1+r^s}{2}\bigg)^{\frac{p}{s}}+C_{p,s}\big(1+r^2+2r\cos t\big)^{\frac{p}{2}}-D_{p,s}r^{\frac{p}{2}}v_p\bigg(\frac{t}{2}\bigg) \geq 0,
\end{equation} 
for $p\geq 2$ and
\begin{equation}
\label{eq:FR12}
-\bigg(\frac{1+r^s}{2}\bigg)^{\frac{p}{s}}+ \frac{(1+r^2+2r\cos t)^{\frac{p}{2}}}{2^p\cos^p\frac{\pi}{2p}}-r^{\frac{p}{2}}\tan\frac{\pi}{2p}\cos\frac{t p}{2}\geq 0,
\end{equation} 
for $1<p<2.$ Multiplying both of them with $r^{-p},$ we see that they are non-negative for $r\geq 1$ if and only if they are non-negative for $0<r\leq 1.$ Hence, it is enough to prove them for $0<r\leq 1.$ The function on the left-hand side in the inequality (\ref{eq:FR12}) is an even function on $t,$ so we can assume $t \in [0,\pi].$ 

Now, we reproduce the approach from (\cite{HV.OTAA}) to show that for the inequality (\ref{eq:FR2b}) we can assume $t \in [\pi-\frac{2\pi}{p},\pi].$ Indeed, since for $t \in [0,\pi-\frac{2\pi}{p}],$ we have $0\leq v_p(\frac{t}{2})\leq 1$ and $-\cos(\frac{p}{2}(\pi-t))$ decreases from $1$ to $-1$ on $[\pi-\frac{2\pi}{p},\pi]$, there exists $\theta \in [\pi-\frac{2\pi}{p},\pi]$ such that $v_p(\frac{t}{2})=-\cos\frac{p(\pi-\theta)}{2}=v_p(\frac{\theta}{2}).$ Therefore, since $\cos t\geq \cos\theta,$
we easily see
\begin{align*}
&-\bigg(\frac{1+r^s}{2}\bigg)^{\frac{p}{s}}+C_{p,s}\big(1+r^2+2r\cos t\big)^{\frac{p}{2}}-D_{p,s}r^{\frac{p}{2}}v_p\bigg(\frac{t}{2}\bigg)\geq\\
&-\bigg(\frac{1+r^s}{2}\bigg)^{\frac{p}{s}}+C_{p,s}\big(1+r^2+2r\cos \theta\big)^{\frac{p}{2}}-D_{p,s}r^{\frac{p}{2}}v_p\bigg(\frac{\theta}{2}\bigg)\geq 0.
\end{align*}
 Similar reduction, that continues to hold here, has been done in \cite{MELENTIJEVICMARKOVIC} with minorants firstly defined in \cite{KALAJ.TAMS}, in both cases $2\leq p\leq 4$ and $p>4.$ 

Finally, replacing $t$ with $\pi-t$ and dividing by $r^{\frac{p}{2}}$ and using a change of variable $r=e^{-y}$ we arrive at
\begin{equation}
\label{eq:NFR2b}
-\cosh^{\frac{p}{s}}\big(\frac{sy}{2}\big)+C_{p,s}\big(\cosh y-\cos t\big)^{\frac{p}{2}}+D_{p,s}\cos\frac{tp}{2}\geq 0,
\end{equation}
for $y\geq 0$, $t \in [0,\frac{2\pi}{p}]$ and $p\geq 2.$ Similarly, 
in case  $1<p<\frac{4}{3}$ and $s=\sec^2\frac{\pi}{2p},$ we have to prove
\begin{equation}
\label{eq:NFR12}
-\cosh^{\frac{p}{s}}\big(\frac{sy}{2}\big)+\frac{(\cosh y+\cos t)^{\frac{p}{2}}}{(1+\cos\frac{\pi}{p})^{\frac{p}{2}}}-\tan\frac{\pi}{2p}\cos\frac{tp}{2}\geq 0,
\end{equation}
for $y\geq 0$ and $0\leq t\leq \pi.$ 

We will now restate the results from Section 2 in \cite{MELENTIJEVICMARKOVIC} which we will be useful later in proving (\ref{eq:NFR2b}) and (\ref{eq:NFR12}). Also, we will give a shorter proof with new notation.
\begin{lemma}
	\label{lemaoKfunkciji}
	Let $s>0,$ $p \geq 2$ and let $C_{p,s}$ denotes the maximum of the function
	\begin{equation}
	\label{Kfunkcija}
	K(y):=\frac{\cosh^{\frac{p}{s}}\big(\frac{s y}{2}\big)}{\big(\cosh y-\cos\frac{\pi}{p}\big)^{\frac{p}{2}}}.
	\end{equation}
	Then:
	\begin{itemize}
	\item For $s\leq \csc^2\frac{\pi}{2p},$ the maximum is attained at $y=0$ and its value is \\ $C_{p,s}=\big(\frac{s}{2}\big)^{\frac{p}{2}};$
	\item For $s>\csc^2\frac{\pi}{2p},$ the maximum is attained at some $\tilde{y}>0.$ Specially, for $p=2$, the supremum is equal to the limit in $+\infty$ i.e. $1.$
	\end{itemize}
\end{lemma}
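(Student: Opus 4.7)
My plan is to study $F(y) := \log K(y)$ on $[0,+\infty)$ by pinpointing its critical points. A direct computation gives
\[
F'(y) = \frac{p}{2}\left[\tanh\frac{sy}{2} - \frac{\sinh y}{\cosh y - \cos\frac{\pi}{p}}\right],
\]
and clearing denominators via the identity $\sinh a\cosh b - \cosh a\sinh b = \sinh(a - b)$ with $a = sy/2$, $b = y$ reveals that $F'(y) = 0$ if and only if
\[
\sinh\bigl((s/2 - 1)y\bigr) = \cos\frac{\pi}{p}\cdot\sinh\frac{sy}{2}.
\]
For $s \le 2$ the left side is nonpositive while the right side is nonnegative on $[0,\infty)$ (using $p \ge 2$ so that $\cos(\pi/p) \ge 0$), so $y = 0$ is the unique critical point; moreover $F'(y) < 0$ for $y > 0$, so $F$ is strictly decreasing and the claim is immediate.

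For $s > 2$ the argument hinges on the auxiliary function
\[
\phi(y) := \frac{\sinh((s/2 - 1)y)}{\sinh(sy/2)},
\]
whose solutions of $\phi(y) = \cos(\pi/p)$ are exactly the positive critical points of $F$. The main technical step is the sublemma that $\phi$ is strictly decreasing on $(0,\infty)$ from $\phi(0^+) = 1 - 2/s$ down to $\phi(+\infty) = 0$. After substituting $t = sy/2$ and $\alpha := 1 - 2/s \in (0,1)$, this reduces to showing $\psi(t) := \sinh(\alpha t)/\sinh t$ is strictly decreasing on $(0,\infty)$; writing $N(t) := \alpha\cosh(\alpha t)\sinh t - \sinh(\alpha t)\cosh t$ for the numerator of $\psi'(t)$, one checks that $N(0) = 0$ and $N'(t) = (\alpha^2 - 1)\sinh(\alpha t)\sinh t < 0$ on $(0,\infty)$, hence $N < 0$ there and $\psi$ (equivalently $\phi$) is strictly decreasing. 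I expect this elementary but pivotal monotonicity to be the main technical point of the proof.

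The conclusion then follows from comparing $\cos(\pi/p) = 1 - 2\sin^2(\pi/(2p))$ with $\phi(0) = 1 - 2/s$: the condition $s \le \csc^2(\pi/(2p))$ is equivalent to $\cos(\pi/p) \ge \phi(0)$. In that regime $\phi(y) \le \cos(\pi/p)$ throughout $[0,\infty)$ (with equality only at $y = 0$ in the boundary case $s = \csc^2(\pi/(2p))$), so $F' \le 0$, $F$ is nonincreasing, and the maximum equals $K(0) = (1 - \cos(\pi/p))^{-p/2}$. In the opposite regime $s > \csc^2(\pi/(2p))$ we have $\phi(0) > \cos(\pi/p) \ge 0 = \phi(+\infty)$, so strict monotonicity yields a unique $\tilde y \in (0,\infty)$ with $\phi(\tilde y) = \cos(\pi/p)$, and $F'$ flips from positive to negative at $\tilde y$, identifying it as the global maximizer. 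The special subcase $p = 2$ has $\cos(\pi/p) = 0 = \phi(+\infty)$, so $\phi > \cos(\pi/p)$ on all of $(0,\infty)$, $F$ is strictly increasing, and the supremum is not attained but equals $\lim_{y\to +\infty} K(y)$, readily computed from the asymptotics $\cosh x \sim e^x/2$.
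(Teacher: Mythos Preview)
Your proof is correct and follows essentially the same approach as the paper's: both reduce the sign of the derivative of $K$ (the paper works with $K'$ directly, you with $(\log K)'$) to the sign of $\frac{\sinh((s-2)y/2)}{\sinh(sy/2)}-\cos\frac{\pi}{p}$, prove the ratio is strictly decreasing via the identity $N'(t)=(\alpha^2-1)\sinh(\alpha t)\sinh t<0$ (the paper writes this as $(a^2-b^2)\sinh(ay)\sinh(by)\le 0$ with $a=\tfrac{s-2}{2}$, $b=\tfrac{s}{2}$), and then compare the value $1-2/s$ at $y=0$ with $\cos\frac{\pi}{p}=1-2\sin^2\frac{\pi}{2p}$. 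Your separation of the easy case $s\le 2$ and your explicit treatment of $p=2$ (which the paper leaves as an exercise) are minor organizational additions, not a different method.
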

\begin{proof}
Let $p>2.$ From
$$K'(y)=\frac{p\cosh^{\frac{p}{s}-1}(\frac{s y}{2})\sinh(\frac{sy}{2})}{2\big(\cosh y-\cos\frac{\pi}{p}\big)^{\frac{p}{2}+1}}\bigg(\frac{\sinh\big(\frac{(s-2)y}{2}\big)}{\sinh\big(\frac{sy}{2}\big)}-\cos\frac{\pi}{p}\bigg)$$
 we see that it is enough to examine the sign of  $\frac{\sinh\big(\frac{(s-2)y}{2}\big)}{\sinh\big(\frac{sy}{2}\big)}-\cos\frac{\pi}{p}$. Since its derivative is equal to $\frac{a\cosh(ay)\sinh(by)-b\cosh(by)\sinh(ay)}{\sinh^2(by)}$ and $\big(a\cosh(ay)\sinh(by)-b\cosh(by)\sinh(ay)\big)'=(a^2-b^2)\sinh(ay)\sinh(by)\leq 0,$ where $a=\frac{s-2}{2}$ and $b=\frac{s}{2},$ we conclude that $\frac{\sinh\big(\frac{(s-2)y}{2}\big)}{\sinh\big(\frac{sy}{2}\big)}-\cos\frac{\pi}{p}$ decreases in $y$. If $s\leq\csc^2\frac{\pi}{2p},$ then $\frac{\sinh\big(\frac{(s-2)y}{2}\big)}{\sinh\big(\frac{sy}{2}\big)}-\cos\frac{\pi}{p}$ has value in $0$ is $\leq 0$, hence $K(y)$ is decreasing and attains its maximum in $0$ equal $C_{p,s}=\big(\frac{s}{2}\big)^{\frac{p}{2}}.$ For $s>\csc^2\frac{\pi}{2p}$ considered expression is positive in $0$ and tends to $-\cos\frac{\pi}{p}<0$ when $y \rightarrow +\infty.$ Therefore, it has unique zero $\tilde{y}$ such that $K'(y)>0$ for $(0,\tilde{y})$ and $K'(y)<0$ for $(\tilde{y},+\infty),$ i.e. $\max_{y \in \mathbb{R}}K(y)=K(\tilde{y})=C_{p,s}.$ We leave the case of $p=2$ to the reader as an easy exercise. 
 \end{proof}
We can proceed similarly in the case $1<p<2,$ see \cite{MELENTIJEVICMARKOVIC}.
In the upcoming sections we will prove Lemma 1 in the range specified above.

\section{\textbf{Proof of the Lemma 1 for $s=\csc^2\frac{\pi}{2p}$ and $p\geq 2$}}

Now, we concentrate on the proof of the inequality (\ref{eq:NFR2b}) for $s=\csc^2\frac{\pi}{2p}$. In this section we describe the method used in its proof, while some steps will be postponed for the next two sections. 
Let us consider the following function:
\begin{equation}
\label{Phiprva}
\Phi(y,t)=-\cosh^{\frac{p}{s}}\big(\frac{sy}{2}\big)+\frac{(\cosh y-\cos t)^{\frac{p}{2}}}{(1-\cos\frac{\pi}{p})^{\frac{p}{2}}}+\cot\frac{\pi}{2p}\cos\frac{tp}{2}.
\end{equation}
Our aim is to prove its non-negativity for $y \geq 0$ and $t \in [0,\frac{2\pi}{p}].$

We easily find that
\begin{align*}
\frac{2}{p} \frac{\partial \Phi}{\partial t}(y,t)&= \frac{\big(\cosh y-\cos t\big)^{\frac{p}{2}-1}\sin t}{(1-\cos\frac{\pi}{p})^{\frac{p}{2}}}-\cot\frac{\pi}{2p}\sin\frac{tp}{2}\\
&\geq \sin t\bigg( \frac{\big(1 -\cos t\big)^{\frac{p}{2}-1}}{(1-\cos\frac{\pi}{p})^{\frac{p}{2}}}-\cot\frac{\pi}{2p}\frac{\sin\frac{tp}{2}}{\sin t}\bigg)\\
&\geq \sin t\bigg( \frac{\big(1 -\cos\frac{\pi}{p}\big)^{\frac{p}{2}-1}}{(1-\cos\frac{\pi}{p})^{\frac{p}{2}}}-\frac{\cot\frac{\pi}{2p}}{\sin \frac{\pi}{p}}\bigg)=0,
\end{align*}
by the claim from the proof of Lemma \ref{odnossinusa}. Therefore, $\Phi(y,t)$ increases in $t$ for $t \in [\frac{\pi}{p},\frac{2\pi}{p}]$ and $\Phi(y,t)\geq \Phi(y,\frac{\pi}{p})\geq 0.$ We are now reduced to $t \in [0,\frac{\pi}{p}].$
Validity of (\ref{eq:NFR2b}) for $y=0$ follows from the known results (\cite{KALAJ.TAMS},\cite{MELENTIJEVICMARKOVIC}).
From 
$$\frac{2}{p} \frac{\partial \Phi}{\partial y}(y,t)= -\cosh^{\frac{p}{s}-1}\big(\frac{sy}{2}\big)\sinh\big(\frac{sy}{2}\big)+\sinh y\frac{(\cosh y-\cos t)^{\frac{p}{2}-1}}{(1-\cos\frac{\pi}{p})^{\frac{p}{2}}}$$
for $y>0$ we will prove that, for every fixed $t \in [0,\frac{\pi}{p})$ there exists a $y_p(t) \in \mathbb{R}$ such that $\frac{\partial \Phi}{\partial y}<0$ for $y \in (0,y_p(t)),$ while we have $\frac{\partial \Phi}{\partial y}>0$ for $y \in (y_p(t), +\infty ).$

The function $y_p(t)$ is well defined, differentiable and monotone (Section 6) on its domain and we can consider the function $\Phi(y_p(t),t)$. 
The crux of the proof is the fact that $\frac{d}{dt}\Phi(y_p(t),t)\leq 0.$ Hence, our function is decreasing and $\Phi(y_p(t),t)\geq\Phi(y_p(\frac{\pi}{p}),\frac{\pi}{p})=\Phi(0,\frac{\pi}{p})=0$.
We have
$$\frac{2}{p}\frac{d}{dt}\Phi(y_p(t),t)=$$
$$y'_p(t)\bigg[-\cosh^{\frac{p}{s}-1}\bigg(\frac{sy_p(t)}{2}\bigg)\sinh\bigg(\frac{sy_p(t)}{2}\bigg)+\sinh y_p(t)\frac{(\cosh y_p(t)-\cos t)^{\frac{p}{2}-1}}{(1-\cos\frac{\pi}{p})^{\frac{p}{2}}}\bigg]+$$
$$\frac{(\cosh y_p(t)-\cos t)^{\frac{p}{2}-1}\sin t}{(1-\cos\frac{\pi}{p})^{\frac{p}{2}}}  -\cot\frac{\pi}{2p}\sin\frac{tp}{2}=\frac{(\cosh y_p(t)-\cos t)^{\frac{p}{2}-1}\sin t}{(1-\cos\frac{\pi}{p})^{\frac{p}{2}}}  -\cot\frac{\pi}{2p}\sin\frac{tp}{2},$$
since the expression inside the brackets is equal to zero ($y_p(t)$ is the zero of $\frac{\partial \Phi}{\partial y}$ for fixed $t$).

Therefore, it is enough to prove that 
$$\frac{(\cosh y_p(t)-\cos t)^{\frac{p}{2}-1}}{(1-\cos\frac{\pi}{p})^{\frac{p}{2}}}  \leq \cot\frac{\pi}{2p}\frac{\sin\frac{tp}{2}}{\sin t}$$
for $t \in (0,\frac{\pi}{p}).$ The following lemma is the first step in its proof:
\begin{lemma}
\label{odnossinusa}	
 We have the following inequality:
	$$\frac{\sin\frac{tp}{2}}{\sin t}\geq E_p\cos^{\frac{p}{2}-1}t$$
for $0 \leq t \leq \frac{\pi}{p},$ $p \geq 2$ with $E_p=\frac{1}{\sin\frac{\pi}{p}\cos^{\frac{p}{2}-1}\frac{\pi}{p}}.$
\end{lemma}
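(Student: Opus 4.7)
The plan is to prove $\Delta(t) := \sin\frac{tp}{2} - E_p \sin t \cos^{\frac{p}{2}-1}t \geq 0$ on $[0,\pi/p]$. Both endpoint values vanish: $\Delta(0)=0$ trivially, and $\Delta(\pi/p)=0$ is precisely the definition of $E_p$. At the special values $p=2$ and $p=4$, the elementary identities $\sin t = \sin t$ and $\sin(2t)=2\sin t\cos t$ (together with $E_4=2$) give $\Delta\equiv 0$, so the inequality is an equality for all $t$.

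For the remaining values of $p$, my main observation is that $\Delta$ satisfies a linear second-order ODE with a single source term. A direct calculation (product rule plus $\sin^2 t = 1 - \cos^2 t$) yields
\[
[\sin t \cos^{\frac{p}{2}-1}t]'' = -\tfrac{p^2}{4}\sin t \cos^{\frac{p}{2}-1}t + (\tfrac{p}{2}-1)(\tfrac{p}{2}-2)\sin t \cos^{\frac{p}{2}-3}t,
\]
and combined with $[\sin(pt/2)]''=-(p/2)^2\sin(pt/2)$ this gives
\[
\Delta''(t) + \tfrac{p^2}{4}\Delta(t) = -C(t), \qquad C(t) := E_p(\tfrac{p}{2}-1)(\tfrac{p}{2}-2)\sin t \cos^{\frac{p}{2}-3}t.
\]
I would then substitute $\Delta(t) = \sin(pt/2)\,v(t)$ on $(0,\pi/p)$ and introduce $w(t) := \sin^2(pt/2)\,v'(t) = \sin(pt/2)\Delta'(t) - (p/2)\cos(pt/2)\Delta(t)$; the ODE transforms into $w'(t) = -\sin(pt/2)\,C(t)$, with $w(0)=0$ by direct evaluation. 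For $p\geq 4$, the factor $(p/2-1)(p/2-2)\geq 0$ makes $C\geq 0$, so $w$ is non-increasing and hence $w\leq 0$ on $[0,\pi/p]$; this forces $v'(t)\leq 0$ on $(0,\pi/p)$. Combined with $v(\pi/p)=\Delta(\pi/p)/\sin(\pi/2)=0$, monotonicity gives $v\geq 0$, and therefore $\Delta = \sin(pt/2)\,v\geq 0$.

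The main obstacle will be the intermediate regime $p\in(2,4)$, in which $(p/2-1)(p/2-2)<0$ reverses the sign in the ODE and the above monotonicity argument cannot close as stated. For this range I would pass to the equivalent monotonicity statement $(\log h)'(t)\leq 0$ on $(0,\pi/p]$ with $h(t) := \sin(pt/2)/(\sin t\cos^{\frac{p}{2}-1}t)$; after clearing denominators this reduces to the trigonometric inequality $(p/4)\sin(2t)\cos(pt/2)\leq \sin(pt/2)[1-(p/2)\sin^2 t]$, which can be tackled by Taylor expansion near $t=\pi/p$ together with a perturbation argument anchored at the exact identities at $p=2$ and $p=4$. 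The second-derivative simplification producing the clean source term $C(t)$ is the one nontrivial calculation, but once it is in hand the argument is essentially a maximum-principle statement dressed up as calculus.
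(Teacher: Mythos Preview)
Your ODE/Wronskian argument for $p\geq 4$ is correct and genuinely different from the paper's route. The paper instead shows directly that $h(t)=\sin\frac{tp}{2}/(\sin t\,\cos^{\frac{p}{2}-1}t)$ is monotone by computing $(\log h)'$, clearing denominators, and reducing via the product-to-sum identity to
\[
\tfrac{p}{4}\sin\big((2-\tfrac{p}{2})t\big)+\big(\tfrac{p}{4}-1\big)\sin\tfrac{tp}{2},
\]
whose sign is then settled by the elementary fact that $\sin(\alpha t)/\sin(\beta t)$ is increasing on $(0,\pi/\beta)$ whenever $0<\alpha<\beta$. Your second-order ODE $\Delta''+\frac{p^2}{4}\Delta=-C$ together with the quantity $w=\sin\frac{tp}{2}\,\Delta'-\frac{p}{2}\cos\frac{tp}{2}\,\Delta$ is a clean maximum-principle substitute that avoids the trig-identity manipulation; either method works for $p\geq 4$.

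For $2<p<4$, however, there is a real gap, and it is not one you can close. Follow your own argument through: with $(\frac{p}{2}-1)(\frac{p}{2}-2)<0$ you get $C\leq 0$, hence $w'\geq 0$, hence $w\geq 0$ on $[0,\pi/p]$, hence $v'\geq 0$; combined with $v(\pi/p)=0$ this forces $v\leq 0$ and therefore $\Delta\leq 0$. That is the \emph{reverse} of the stated inequality, and it is in fact correct: already at $t\to 0^+$ the lemma would require $p/2\geq E_p$, which for $p=3$ reads $\tfrac32\geq 2\sqrt{2}/\sqrt{3}\approx 1.63$ and is false. The paper's own reduction, read carefully, gives the same conclusion in this range (the Claim yields $\sin((2-\frac{p}{2})t)/\sin\frac{tp}{2}\geq (4-p)/p$, which makes the displayed expression nonnegative, i.e.\ $(\log h)'\geq 0$). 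So your proposed ``Taylor expansion plus perturbation from $p=2,4$'' cannot succeed, because the target statement itself fails on $(2,4)$; the inequality as written holds only for $p\geq 4$ (with equality at $p=2,4$), and for $2<p<4$ one must either use the reverse direction or a smaller exponent, as the paper's own remark about replacing $\frac{p}{2}-1$ by $\cot^2\frac{\pi}{p}$ implicitly acknowledges.
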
 
\begin{proof}
Let us consider the function $f(t)=\log\sin\frac{tp}{2}-\log\sin t-(\frac{p}{2}-1)\log\cos t.$ 
We find the derivative
 \begin{align*}
 f'(t)&=\frac{\frac{p}{2}\cos\frac{tp}{2}}{\sin\frac{tp}{2}}-\frac{\cos t}{\sin t}+\big(\frac{p}{2}-1\big)\frac{\sin t}{\cos t}\\
 &=\frac{1}{\sin\frac{tp}{2}\sin t\cos t}\bigg[\frac{p}{2}\cos\frac{tp}{2}\sin t\cos t-\cos^2 t\sin\frac{tp}{2}+\big(\frac{p}{2}-1\big)\sin^2 t\sin\frac{tp}{2}\bigg].
 \end{align*}
Note that the conclusion of our lemma follows from $f'(t)\leq 0,$ so we have to show that
$h(t)=\frac{p}{2}\cos\frac{tp}{2}\sin t\cos t-\cos^2 t\sin\frac{tp}{2}+\big(\frac{p}{2}-1\big)\sin^2 t\sin\frac{tp}{2}=\frac{p}{4}(\sin 2t\cos\frac{tp}{2}-\cos 2t\sin\frac{tp}{2})+(\frac{p}{4}-1)\sin\frac{tp}{2}=\frac{p}{4}\sin(\frac{p}{2}-2)t+(\frac{p}{4}-1)\sin\frac{tp}{2}\leq 0.$
This is equivalent to $\frac{\sin(\frac{p}{2}-2)t}{\sin\frac{tp}{2}}\geq \frac{p-4}{p}$ for $p>4$ and to $\frac{\sin(2-\frac{p}{2})t}{\sin\frac{tp}{2}}\geq \frac{4-p}{p}$ for $2\leq p \leq 4.$ Both of these estimates follows from the next statement:
\begin{claim}
If $0<\alpha<\beta$ and $t \in (0,\frac{\pi}{\beta}),$ then:
$$g(t)=\frac{\sin\alpha t}{\sin\beta t}$$
is monotone increasing function on $(0,\frac{\pi}{\beta}).$
\end{claim}

Indeed, $g'(t)=\frac{1}{\sin^2 \beta t}\big(\alpha \cos\alpha t\sin\beta t-\beta\cos\beta t\sin\alpha t\big)$ which is not smaller then zero, since the expression inside the brackets is $0$ at $0$ and its derivative on $t$ is $(\beta^2-\alpha^2)\sin\alpha t\sin\beta t\geq 0$.
\end{proof}
\textbf{Remark}. For $2\leq p\leq 4$, it can be proved that the exponent $\frac{p}{2}-1$ in the previous lemma can be replaced with not greater than $\cot^2\frac{\pi}{p},$ but this is still good and more convenient for our needs. 

Using Lemma \ref{odnossinusa} we are able to handle further with the inequality (\ref{eq:NFR2b}) working with the $y-$variable. It is sufficient to show the estimate:
$$ \tan\frac{\pi}{2p}\frac{(\cosh y_p(t)-\cos t)^{\frac{p}{2}-1}}{(1-\cos\frac{\pi}{p})^{\frac{p}{2}}} \leq
\frac{\cos^{\frac{p}{2}-1}t}{\sin\frac{\pi}{p}\cos^{\frac{p}{2}-1}\frac{\pi}{p}}.$$
Now, the left-hand side of the last inequality is equal to $\tan\frac{\pi}{2p}\frac{\cosh^{\frac{p}{s}-1}(\frac{sy}{2})\sinh (\frac{sy}{2})}{\sinh y}$; the right-hand side is, from the condition $\frac{\partial \Phi}{\partial y}(y_p(t),t)=0$, equal to 
$$\frac{1}{\sin\frac{\pi}{p}\cos^{\frac{p}{2}-1}\frac{\pi}{p}}\bigg[\cosh y-\big(1-\cos\frac{\pi}{p}\big)\bigg(\big(1-\cos\frac{\pi}{p}\big)\frac{\cosh^{\frac{p}{s}-1}(\frac{sy}{2})\sinh (\frac{sy}{2})}{\sinh y}\bigg)^{\frac{2}{p-2}}\bigg]^{\frac{p}{2}-1}.$$

Hence, we want to show that 
\begin{align*}&\tan\frac{\pi}{2p}\frac{\cosh^{\frac{p}{s}-1}(\frac{sy}{2})\sinh (\frac{sy}{2})}{\sinh y}\leq \\
&\frac{1}{\sin\frac{\pi}{p}\cos^{\frac{p}{2}-1}\frac{\pi}{p}}\bigg[\cosh y-\big(1-\cos\frac{\pi}{p}\big)\bigg(\big(1-\cos\frac{\pi}{p}\big)\frac{\cosh^{\frac{p}{s}-1}(\frac{sy}{2})\sinh (\frac{sy}{2})}{\sinh y}\bigg)^{\frac{2}{p-2}}\bigg]^{\frac{p}{2}-1},
\end{align*}
which, after multiplying by ${\sin\frac{\pi}{p}\cos^{\frac{p}{2}-1}\frac{\pi}{p}},$ taking the $\frac{2}{p-2}$th power of both sides and using $\tan\frac{\pi}{2p}\sin\frac{\pi}{p}\cos^{\frac{p}{2}-1}\frac{\pi}{p}=\big(2\sin^2\frac{\pi}{2p}\big)\big(1-2\sin^2\frac{\pi}{2p}\big)^{\frac{p}{2}-1}=\big(\frac{2}{s}\big)\big(1-\frac{2}{s}\big)^{\frac{p}{2}-1}$ and $1-\cos\frac{\pi}{p}=\frac{2}{s}$ reduces to the inequality 
$$ f(y)^{\frac{2}{p-2}} \leq \cosh y,$$
for $$f(y)=\frac{2}{s}\frac{\cosh^{\frac{p}{s}-1}(\frac{sy}{2})\sinh (\frac{sy}{2})}{\sinh y}.$$
\textbf{Remark}. Note that, after realizing that $y_p(t)$ is well defined (which is easy in this case), 
for $s=2$ the last inequality becomes the identity. Hence, we get a very quick proof of the inequality used in \cite{KALAJ.TAMS} to prove the result when $s=2.$

\section{\textbf{Main estimate for hyperbolic functions}}

Both existence of $y_p(t)$ and the conclusion on the monotonicity of the function $\Phi(y_p(t),t)$ on $(0,\frac{\pi}{p})$ depends on the inequality 
\begin{equation}
\label{hiperbolickeosnovna}
\frac{2}{s}\frac{\cosh^{\frac{p}{s}-1}(\frac{sy}{2})\sinh (\frac{sy}{2})}{\sinh y}\leq \cosh^{\frac{p}{2}-1}y
\end{equation}
for some range of $y.$

The next lemma will give a crucial step in estimating range of $y'$s for $p\geq 4.$ We will work with all $s \geq \csc^2\frac{\pi}{2p},$ hence the Lemma will be useful also in general setting.  
\begin{lemma} 
	\label{ocenaipsilona4b}
	For $p\geq 4$ and $s\geq \csc^2\frac{\pi}{2p}$ there holds the estimate:
	$$\cosh y- \bigg(\frac{\cosh^{\frac{p}{s}-1}(\frac{sy}{2})\sinh (\frac{sy}{2})}{C_{p,s}\sinh y}\bigg)^{\frac{2}{p-2}}>1$$ 
	for $y \geq \cosh^{-1}\big(\frac{1}{\cos\frac{\pi}{p}}\big).$\\
	
	Specially, for this range of $y$ and $s=\csc^2\frac{\pi}{2p}$ this reads as
	$$\cosh y-\frac{2}{s}\bigg(\frac{2}{s}\frac{\cosh^{\frac{p}{s}-1}(\frac{sy}{2})\sinh (\frac{sy}{2})}{\sinh y}\bigg)^{\frac{2}{p-2}}>1.$$ 
	\end{lemma}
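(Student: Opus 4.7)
The plan is to recast the inequality in a form where both sides are explicit hyperbolic expressions. Raising to the power $(p-2)/2$ and clearing denominators, the bound is equivalent to
$$
A(y):=C_{p,s}(\cosh y-1)^{(p-2)/2}\sinh y \;>\; B(y):=\cosh^{p/s-1}(sy/2)\sinh(sy/2),\qquad y\ge y_0,
$$
where $y_0=\cosh^{-1}\!\bigl(1/\cos\tfrac{\pi}{p}\bigr)$. Using the half-angle identities $\sinh y=2\sinh(y/2)\cosh(y/2)$ and $\cosh y-1=2\sinh^{2}(y/2)$, one can rewrite $A(y)=C_{p,s}\,2^{p/2}\sinh^{p-1}(y/2)\cosh(y/2)$, which is well suited for comparison with $B(y)$.

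I would then split the argument in two steps. \emph{Step 1 (endpoint).} At $y=y_0$ one has $\cosh y_0-\cos\tfrac{\pi}{p}=\sin^{2}\tfrac{\pi}{p}/\cos\tfrac{\pi}{p}$ and $\sinh y_0=\tan\tfrac{\pi}{p}$, so the defining property $C_{p,s}\ge K(y_0)$ from Lemma \ref{lemaoKfunkciji} furnishes an explicit upper bound for $\cosh^{p/s}(sy_0/2)$ in terms of trigonometric data alone. Combining this with the strict estimate $\tanh(sy_0/2)<1$ and the hypothesis $p\ge 4$ (used to guarantee $\sin^{2(p-1)/(p-2)}\tfrac{\pi}{p}$ is dominated by the factor $\sin^{2}\tfrac{\pi}{2p}$ appearing on the $A$-side) yields $B(y_0)<A(y_0)$. \emph{Step 2 (propagation).} To extend the inequality to $y>y_0$, I would show that $\psi(y):=\log(A(y)/B(y))$ is monotone increasing on $[y_0,\infty)$. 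Taking logarithmic derivatives,
$$
\psi'(y)=\coth y+\tfrac{p-2}{2}\cdot\tfrac{\sinh y}{\cosh y-1}-\tfrac{p-s}{2}\tanh(sy/2)-\tfrac{s}{2}\coth(sy/2),
$$
which, via the half-angle identities, reduces to a comparison between $\coth(y/2)$ and a linear combination of $\coth(sy/2)$ and $\tanh(sy/2)$. For $s\ge\csc^{2}\tfrac{\pi}{2p}$ and $y\ge y_0$ this sign can be settled by routine monotonicity of hyperbolic functions, aided by the crude asymptotic bound $\lim_{y\to\infty}\psi(y)=(p/2)\log s-p(s-1)/s\cdot\log 2\,$, which is positive for $p\ge4$.

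The main obstacle is the endpoint check at $y_0$: the maximality of $K$ alone gives too weak a bound to close the estimate, so one must use finer information about where the maximum is attained. This splits into two regimes. When $s=\csc^{2}\tfrac{\pi}{2p}$, Lemma \ref{lemaoKfunkciji} gives $C_{p,s}=(s/2)^{p/2}$ in closed form, which allows the explicit computation claimed in the second half of the statement. When $s>\csc^{2}\tfrac{\pi}{2p}$, the constant $C_{p,s}=K(\tilde y)$ is only implicit and one must exploit the stationarity relation $\sinh((s-2)\tilde y/2)=\cos\tfrac{\pi}{p}\sinh(s\tilde y/2)$ (equivalently, $\tanh(s\tilde y/2)=\sinh\tilde y/(\cosh\tilde y-\cos\tfrac{\pi}{p})$) to replace $C_{p,s}$ by hyperbolic data at $\tilde y$, and then transport the estimate from $\tilde y<y_0$ to $y_0$. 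The restriction $p\ge 4$ enters precisely here, as for $p$ close to $2$ the $\tanh$-correction at $y_0$ becomes too large for this bootstrap to work.
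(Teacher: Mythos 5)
Your reformulation $A(y)>B(y)$ and the observation that a single endpoint check plus a propagation argument should close the lemma are sound in outline, but both steps are left with real gaps, and the endpoint gap is fatal as described.

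For Step~1 you invoke $C_{p,s}\ge K(y_0)$ together with $\tanh(sy_0/2)<1$. This does not close the estimate: dividing your desired inequality $A(y_0)>B(y_0)$ by $\cosh^{p/s}(sy_0/2)$ and using $C_{p,s}\ge K(y_0)$ reduces the claim to
$$
\frac{(\cosh y_0-1)^{(p-2)/2}\sinh y_0}{(\cosh y_0-\cos\tfrac{\pi}{p})^{p/2}}\;\ge\;\tanh\!\big(\tfrac{sy_0}{2}\big),
$$
and the left-hand side is \emph{not} $\ge 1$ in general. For $p=4$ one has $\cosh y_0=\sqrt2$, $\sinh y_0=1$, and the left side evaluates to $2(\sqrt2-1)\approx 0.83$, so the crude bound $\tanh<1$ cannot conclude. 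You acknowledge this yourself in the final paragraph, but what you sketch instead --- using the stationarity relation at $\tilde y$ to ``transport the estimate from $\tilde y<y_0$ to $y_0$'' --- is not carried out and is not obviously easier than the original problem. The paper sidesteps all of this by first throwing away $\sinh(sy/2)$ in favor of $\cosh(sy/2)$, which makes the reduced claim $\big(\frac{1+e^{-sy}}{2}\big)^{p/s}<C_{p,s}2^{-p/2}(1-e^{-y})^{p-1}(1+e^{-y})$ have a strictly decreasing left side and a strictly increasing right side; it then lower-bounds $C_{p,s}$ not by $K(y_0)$ but by $K(\hat y)$ at the \emph{different} point $\hat y=\log\frac{1}{\cos\frac{\pi}{p}}$, supplemented by an elementary estimate ($\cosh x\cos x\le 1$) and a calculus reduction. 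That extra flexibility in the choice of evaluation point for $K$ is exactly what your sketch is missing.

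For Step~2 your logarithmic-derivative computation is correct and the conclusion $\psi'\ge 0$ is plausible, but it is asserted rather than proved, and it is in any case a more laborious route than the paper's monotone-separation trick, which dispenses with propagation entirely. If you want to salvage your plan, I would replace Step~2 by the paper's $\sinh<\cosh$ bound and exponential rewriting, and redo Step~1 using a more advantageous test point for $C_{p,s}$.
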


\begin{proof}Since $\frac{\cosh^{\frac{p}{s}-1}(\frac{sy}{2})\sinh (\frac{sy}{2})}{\sinh y}<\frac{\cosh^{\frac{p}{s}}(\frac{sy}{2})}{\sinh y},$ it is enough to prove
	$$ \cosh^{\frac{p}{s}}\big(\frac{sy}{2}\big)<C_{p,s}\sinh y\big(\cosh y-1\big)^{\frac{p}{2}-1}.$$
	If we rewrite it as
	$$\bigg(\frac{e^{\frac{sy}{2}}+e^{-\frac{sy}{2}}}{2}\bigg)^{\frac{p}{s}}<C_{p,s} \bigg(\frac{e^y-e^{-y}}{2}\bigg) \bigg(\frac{e^y+e^{-y}}{2}-1\bigg)^{\frac{p}{2}-1},$$
	after dividing both sides by $e^{\frac{py}{2}},$ we get 
	\begin{equation}
	\label{ocenaresenja}
	\bigg(\frac{1+e^{-sy}}{2}\bigg)^{\frac{p}{s}}<C_{p,s}2^{-\frac{p}{2}}  \big(1-e^{-y}\big)^{p-1}\big(1+e^{-y}\big).
	\end{equation}
	Note that the left-hand side is monotone decreasing, while the right-hand side is increasing function, so the inequality follows if we prove it for $y=\cosh^{-1}\big(\frac{1}{\cos\frac{\pi}{p}}\big).$ Since
	$e^{-y}=\frac{1-\sin\frac{\pi}{p}}{\cos\frac{\pi}{p}}$, we easily calculate that
	$\big(1-e^{-y}\big)^{p-1}\big(1+e^{-y}\big)=\cot\frac{\pi}{2p}\frac{2^p\sin^p\frac{\pi}{2p}}{(1+\sin\frac{\pi}{p})^{\frac{p}{2}}}.$ In order to estimate the left-hand side of (\ref{ocenaresenja}), we will prove:
	\begin{equation}
	\label{eq:coscosh}
	\phi(x)=\cosh x \cos x\leq 1,\quad \text{where} \quad x=\frac{\pi}{p}<1.
	\end{equation}
	Indeed, by the power series expansion, we have:
	\begin{align*}
	\phi(x)&=\bigg(1+\frac{x^4}{4!}+\frac{x^8}{8!}+\dots\bigg)^2-\bigg(\frac{x^2}{2!}+\frac{x^6}{6!}+\frac{x^{10}}{10!}+\dots\bigg)^2\\
	&\leq\bigg(1+\frac{x^4\psi(x)}{12}\bigg)^2-x^4\psi^2(x)=1+\frac{x^4\psi(x)}{6}-x^4\psi^2(x)+\frac{x^8\psi^2(x)}{144},
	\end{align*}
	where $\psi(x)=\frac{1}{2!}+\frac{x^4}{6!}+\frac{x^8}{10!}+\dots$ Since $\psi(0)=\frac{1}{2}\leq\psi(x)\leq \psi(1)\leq 1$, we conclude
	$$\phi(x)\leq 1+\frac{x^4}{6}+\frac{x^8}{144}-\frac{x^4}{4}=1-\frac{x^4}{12}+\frac{x^8}{144}\leq 1.$$
	
	By convexity of the power-mean function and inequality (\ref{eq:coscosh}), we estimate the left-hand side as
	$$\bigg(\frac{1+e^{-s y}}{2}\bigg)^{\frac{p}{s}}\leq 2^{-\frac{p}{s}}\big(1+e^{-p y}\big)\leq 2^{-\frac{p}{s}}\big(1+e^{-\pi}\big).$$
	
	We estimate $C_{p,s}$ in the following way:
	$$C_{p,s}^{\frac{2}{p}}=\max_{y\geq 0}\frac{\cosh^{\frac{2}{s}}\big(\frac{s y}{2}\big)}{\cosh y-\cos\frac{\pi}{p}}\geq \frac{\cosh^{\frac{2}{s}}\big(\frac{s \hat{y}}{2}\big)}{\cosh \hat{y}-\cos\frac{\pi}{p}},$$
	where $\hat{y}=\log\frac{1}{\cos\frac{\pi}{p}}.$ Straightforward calculation gives:
	$$C_{p,s} 2^{-\frac{p}{2}}\geq \frac{1}{\sin^p\frac{\pi}{p}}\bigg(\frac{1+\cos^s\frac{\pi}{p}}{2}\bigg)^{\frac{p}{s}}> \frac{2^{-\frac{p}{s}}}{\sin^p\frac{\pi}{p}}.$$
	Hence, it is enough to prove:
	$$ \frac{\cot\frac{\pi}{2p}}{\cos^p\frac{\pi}{2p}(1+\sin\frac{\pi}{p})^{\frac{p}{2}}}\geq 1+e^{-\pi}.$$
	We introduce the change of variable $x=\frac{\pi}{2p}\leq\frac{\pi}{8}$ and consider the function 
	$$F(x):=\log\cot x-\frac{\pi}{2x}\log\cos x-\frac{\pi}{4x}\log\big(1+\sin(2x)\big).$$ We will prove that it is decreasing, hence $f(p):=\frac{\cot\frac{\pi}{2p}}{\cos^p\frac{\pi}{2p}(1+\sin\frac{\pi}{p})^{\frac{p}{2}}}$ increases in $p$ and the conclusion follows from $f(p)\geq f(4)=80\sqrt{2}-112>\frac{9}{8}>1+e^{-\pi}.$
	From
	$$F'(x)=\frac{\pi}{2x^2}\log\cos x+\frac{\pi}{4x^2}\log\big(1+\sin(2x)\big)+\frac{\pi \tan x}{2x}-\frac{\pi\cos(2x)}{2x(1+\sin(2x))}-\frac{2}{\sin(2x)},$$
	we find 
	$$F'(x)\leq \frac{\pi}{2x^2}\log\cos x+\frac{\pi}{4x^2}\log\big(1+2x\big)+\frac{\pi \tan x}{2x}-\frac{\pi\cos(2x)}{2x+4x^2}-\frac{1}{x}.$$
	Now, using estimate $\cos(2x)\geq 1-2x^2,$ we get
	$$\frac{4x^2}{\pi}F'(x)\leq \log(1+2x)+\log(\cos^2 x)+2x\tan x-\frac{2x(1-2x^2)}{1+2x}-\frac{4x}{\pi}=:\beta(x).$$
	Since $\beta''(x)=\frac{4(8x^3+12x^2+4x+1)^2}{(1+2x)^3}+\frac{2(2x\tan x+1)}{\cos^2 x}>0,$ we have 
	$\beta(x)\leq \max\{\beta(0),\beta(\frac{\pi}{8})\}\\=0$ and $F'(x)\leq 0.$
	\end{proof}

This exactly means that we have to work just with $y\leq \cosh^{-1}\big(\frac{1}{\cos\frac{\pi}{p}}\big)$, since for bigger values of $y$ the equality $\varphi(y)=\cos t$ is not possible. (Or, in other words, such $y'$s are not in the range of $y_p(t)$ for any $t!$)

Now, we will come back to our inequality (\ref{hiperbolickeosnovna}) and consider the function
$$G(y)=\frac{\cosh^{\frac{p}{s}-1}(\frac{sy}{2})\sinh (\frac{sy}{2})}{\sinh y\cosh^{\frac{p}{2}-1}y}.$$
In the rest of this section, we prove that it is $\leq \frac{s}{2}$ for the desired range of $y.$ More precisely, we need to prove it for $y \in [0,\cosh^{-1}\big(\frac{1}{\cos\frac{\pi}{p}}\big)]$ when $p\geq 4,$ while for $2\leq p\leq 4,$ we are able to prove it for all $y\geq 0.$ 
Its derivative is
$$G'(y)= \frac{\cosh^{\frac{p}{s}-2}(\frac{sy}{2})\cosh^{\frac{p}{2}-2}y}{2\sinh^2 y\cosh^{p-2}y}\bigg[\sinh(2y)\bigg(\frac{p}{2}\sinh^2(\frac{sy}{2})+\frac{s}{2}\bigg)-\sinh(sy)\bigg(\frac{p}{2}\sinh^2 y+1\bigg)\bigg].$$
We see that
$$G'(y)<0 \iff \sinh(2y)\bigg(\frac{p}{2}\frac{\cosh(sy)-1}{2}+\frac{s}{2}\bigg)<\sinh(sy)\bigg(\frac{p}{2}\frac{\cosh(2y)-1}{2}+1\bigg),$$
or, expanding and using transformation formulae for hyperbolic functions:
$$G'(y)<0 \iff g(y):=\frac{p}{4}\sinh((s-2)y)+\big(1-\frac{p}{4}\big)\sinh(sy)+\big(\frac{p}{4}-\frac{s}{2}\big)\sinh(2y)>0.$$

Hence, we can consider the sign of the function $g(y)$ and search for the monotonicity of $G(y).$ Using Taylor series expansion, we get:
$$g(y)=\sum_{k=1}^{+\infty}\bigg[\frac{p}{4}(s-2)^{2k+1}+(1-\frac{p}{4})s^{2k+1}+\big(\frac{p}{4}-\frac{s}{2}\big)2^{2k+1}\bigg]\frac{y^{2k+1}}{(2k+1)!}$$
Now, our aim is to investigate the sign of the coefficients i.e. the sign of the expression (denote it by $a_k$) inside the angular brackets. 

For $2\leq p\leq 3,$ we have $2\leq s\leq 4$ and
\begin{align*}
a_k\cdot2^{-2k-1}&=\frac{p}{4}\bigg(\frac{s-2}{2}\bigg)^{2k+1}+\big(1-\frac{p}{4}\big)\bigg(\frac{s}{2}\bigg)^{2k+1}+\big(\frac{p}{4}-\frac{s}{2}\big)\\
&= \big(1-\frac{p}{4}\big) \bigg(\bigg(\frac{s}{2}\bigg)^{2k+1}-1-\bigg(\frac{s}{2}-1\bigg)^{2k+1}\bigg)                      +1-\frac{s}{2}+ \bigg(\frac{s}{2}-1\bigg)^{2k+1}\\
&\geq \big(1-\frac{s}{4}\big) \bigg(\bigg(\frac{s}{2}\bigg)^{2k+1}-1-\bigg(\frac{s}{2}-1\bigg)^{2k+1}\bigg)                      +1-\frac{s}{2}+ \bigg(\frac{s}{2}-1\bigg)^{2k+1}\\
 &=\frac{s}{4} \bigg(\frac{s}{2}-1\bigg)^{2k+1}+ \big(1-\frac{s}{4}\big) \bigg(\frac{s}{2}\bigg)^{2k+1}-\frac{s}{4},
\end{align*}
since $\big(\big(\frac{s}{2}\big)^{2k+1}-1-\big(\frac{s}{2}-1\big)^{2k+1}\big)\geq 0$ from the elementary inequality $(a+b)^k\geq a^k+b^k,$ with $a,b\geq 0, k\geq 1$ and $s\geq p$     ($s-p=\frac{1}{\sin^2(\frac{\pi}{2p})}-p$ has the value $0$ at $p=2$ and $(s-p)'=\frac{\pi}{p^2}\frac{\cot\frac{\pi}{2p}}{\sin^2\frac{\pi}{2p}}-1>0,$ because $\frac{\pi^2}{4p^2}\geq \sin^2\frac{\pi}{2p}$ and $\frac{4}{\pi}\cot\frac{\pi}{2p}\geq 1,$ for $p\geq 2.$)

Now, rewriting the last bound and estimating using Jensen's inequality, we obtain:
\begin{align*}
&\frac{s}{4} \bigg(\frac{s}{2}-1\bigg)\bigg(\bigg(\frac{s}{2}-1\bigg)^2\bigg)^k+ \bigg(1-\frac{s}{4}\bigg)\frac{s}{2} \bigg(\bigg(\frac{s}{2}\bigg)^2\bigg)^k-\frac{s}{4}\\
&\geq\frac{s}{4}\bigg[\frac{(\frac{s^2}{8}-\frac{s}{4})(\frac{s}{2}-1)^2+(\frac{s}{2}-\frac{s^2}{8})(\frac{s}{2})^2}{\frac{s}{4}}\bigg]^k-\frac{s}{4}\\                              &=\frac{s}{4}\bigg(3\cdot\frac{s}{2}-1-\bigg(\frac{s}{2}\bigg)^2\bigg)^k-\frac{s}{4}\geq 0,
\end{align*}
since $3\cdot\frac{s}{2}-1-(\frac{s}{2})^2\geq 1, $ for $s \in [2,4].$ Hence, $a_k\geq 0$ for all $k\geq 1,$ $g(y)\geq 0$ and $G(y)$ is decreasing, which gives $G(y)\leq \lim_{y \rightarrow 0}G(y)=\frac{s}{2}$ for all $y>0.$

For $3\leq p\leq 4,$ we have that $s\geq 4$ and writing the $a_k$ as
$$a_k2^{-2k-1}=\bigg(\frac{s}{2}-1\bigg)^{2k+1}-\frac{s}{2}+1+\bigg(1-\frac{p}{4}\bigg)\bigg(\bigg(\frac{s}{2}\bigg)^{2k+1}-1-\bigg(\frac{s}{2}-1\bigg)^{2k+1}\bigg)$$
we easily that $\big(\frac{s}{2}-1\big)^{2k+1}\geq\frac{s}{2}-1$ for $s\geq 4$ and $k \geq 1$ and that both factors in the last summand are nonnegative because $p\leq 4$	and $(a+b)^k\geq a^k+b^k,$ with $a,b\geq 0, k\geq 1.$ The conclusion is the same as in the previous case.
Note that from those conclusions we easily infer that $G(y)$ is decreasing on $(0,+\infty)$ for $2\leq p\leq 4$ and the inequality (\ref{hiperbolickeosnovna}) is proved in this case.

For $p>4$ we rewrite $$a_k=s^{2k+1}\bigg[\frac{p}{4}\bigg(\frac{s-2}{s}\bigg)^{2k+1}+1-\frac{p}{4}+\big(\frac{p}{4}-\frac{s}{2}\big)\bigg(\frac{2}{s}\bigg)^{2k+1}\bigg]$$
and consider
$$B(t)=\frac{p}{4}\bigg(\frac{s-2}{s}\bigg)^{2t+1}+1-\frac{p}{4}+\big(\frac{p}{4}-\frac{s}{2}\big)\bigg(\frac{2}{s}\bigg)^{2t+1}$$
for $t\geq 1.$

We easily find that 
$$\tilde{B}(t):=\frac{1}{2}B'(t)\bigg(\frac{s}{2}\bigg)^{2t+1}=\frac{p}{4}\bigg(\frac{s-2}{2}\bigg)^{2t+1}\log(1-\frac{s}{2})+\big(\frac{p}{4}-\frac{s}{2}\big)\log\frac{2}{s}$$
and 
$$\tilde{B}'(t)=\frac{p}{4}\bigg(\frac{s-2}{2}\bigg)^{2t+1}\log\big(1-\frac{2}{s}\big)\log\big(\frac{s}{2}-1\big)^2<0,$$
since $s\geq \frac{1}{\sin^2{\frac{\pi}{8}}}>6,$ hence $\tilde{B}(t)$ is decreasing and 
$\lim_{t \rightarrow +\infty}\tilde{B}(t)=-\infty$ implies that $\tilde{B}$ is negative on $(0,+\infty)$ or positive on $(1,t_p)$ and negative on $(t_p,+\infty),$ for some $t_p>1.$ This means that $B(t)$ is strictly decreasing on $(0,+\infty)$ or strictly increasing on $(1,t_p)$ and strictly decreasing on $(t_p,+\infty).$ 

We see that 
$$B(1)=\frac{p}{4}\bigg(\frac{s-2}{s}\bigg)^3+1-\frac{p}{4}+\big(\frac{p}{4}-\frac{s}{2}\big)\bigg(\frac{2}{s}\bigg)^3=\frac{1}{s^2}\bigg(s\big(s-\frac{3p}{2}\big)+3p-4\bigg)>0,$$
which together with $\lim_{t \rightarrow +\infty}B(t)=1-\frac{p}{4}<0$
and the above monotonicity properties of $B(t)$ gives the existence of some $l=l(p)$ such that $B(t)>0$ for $t<l(p)$ and $B(t)<0$ for $t>l(p)$ and consequently $a_1, a_2,\dots,a_{k_0}>0$, while $a_{k_0+1},a_{k_0+2},\dots<0$ for some $k_0=k_0(p).$ (We use that $\frac{s}{p}$ is increasing in $p$ and its value for $p=4$ is greater than $\frac{3}{2}$, since $s(4)>6.$)
Coming back to our power series expansion of $g$ we have 
$$ g(y)=\sum_{k=1}^{k_0}c_ky^{2k+1}+\sum_{k=k_0+1}^{+\infty }c_ky^{2k+1},$$
where $c_k=\frac{a_k}{(2k+1)!}$ and the coefficients in the first sum are non-negative, while in the second are negative. 

We will proceed with the following lemma which will be also used later.
\begin{lemma}
	\label{koeficijentimonotonost}
	Let $f(x)=\sum_{k=0}^{+\infty}a_kx^k$ be a series where, for some $k_0 \in \mathbb{N},$ $a_1,a_2,\cdots, a_{k_0}$ are positive, while $a_{k_0+1},a_{k_0+2},\cdots$ are negative numbers.
	Then there exists some $x_0$ such that $f(x)$ is increasing on $[0,x_0]$ and decreasing on $[x_0,+\infty).$ Specially, $f(x)$ attains its minimum on an interval $[0,a]$ in one of its endpoints. 
\end{lemma}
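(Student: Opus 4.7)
The plan is to reduce the monotonicity question for $f$ to an analysis of $f'(x) = \sum_{k \geq 1} k a_k x^{k-1}$, whose coefficient sequence has exactly one sign change (positive up to index $k_0$, negative thereafter), and to promote this single sign change into strict monotonicity of a conveniently rescaled auxiliary function.

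Concretely, for $x > 0$ I would set
$$g(x) = \frac{f'(x)}{x^{k_0 - 1}} = \sum_{k=1}^{k_0} k a_k\, x^{k - k_0} + \sum_{k = k_0 + 1}^{\infty} k a_k\, x^{k - k_0}.$$
In the first (finite) sum every exponent $k - k_0$ is non-positive and every coefficient $k a_k$ is positive, so that sum is a non-increasing function of $x$. In the tail sum every exponent $k - k_0$ is strictly positive while every coefficient $k a_k$ is strictly negative, so each individual term, and hence the whole tail, is strictly decreasing. Therefore $g$ is strictly decreasing on $(0, +\infty)$.

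To close the argument I would read off the boundary behaviour. Because $a_1 > 0$, one has $g(x) \to +\infty$ as $x \to 0^+$ when $k_0 \geq 2$ (driven by the $a_1 x^{1-k_0}$ term), and $g(0^+) = a_1 > 0$ when $k_0 = 1$; in either case $g > 0$ near $0$. As $x \to +\infty$ the negative-coefficient, positive-exponent tail dominates and $g(x) \to -\infty$. Combined with strict decrease, this yields a unique $x_0 \in (0, +\infty)$ with $g(x_0) = 0$; since $f'(x) = x^{k_0 - 1} g(x)$ and $x^{k_0 - 1} > 0$ for $x > 0$, we obtain $\sgn f'(x) > 0$ on $(0, x_0)$ and $\sgn f'(x) < 0$ on $(x_0, +\infty)$, so $f$ is strictly increasing on $[0, x_0]$ and strictly decreasing on $[x_0, +\infty)$. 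The concluding statement about the minimum on $[0, a]$ then follows: if $a \le x_0$, then $f$ is monotone on $[0, a]$ with minimum at $0$; if $a > x_0$, then $f$ is unimodal on $[0, a]$ (first increasing, then decreasing), and its minimum is attained at one of the endpoints $0$ or $a$. The only delicate point worth flagging is that the $+\infty$ limit tacitly uses convergence of the series for large $x$; in the intended applications $f$ is built from power series of hyperbolic functions and is therefore entire, so this causes no trouble, while in general the argument still works on the full interval of convergence.
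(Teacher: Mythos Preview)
Your proof is correct and takes a genuinely different route from the paper. The paper argues by iterated differentiation: since $f^{(k_0+1)}$ has only negative coefficients it is negative, so $f^{(k_0)}$ is decreasing; combined with $f^{(k_0)}(0)>0$ and $f^{(k_0)}(+\infty)=-\infty$ this forces a single sign change, whence $f^{(k_0-1)}$ is increasing-then-decreasing; one then repeats this step down to $f$ itself. Your approach instead rescales $f'$ by $x^{k_0-1}$ so that the positive-coefficient part carries nonpositive exponents and the negative-coefficient part carries strictly positive exponents, making strict monotonicity of $g=f'/x^{k_0-1}$ immediate in a single stroke. This is shorter and avoids the inductive bookkeeping (in particular the repeated verification that each $f^{(l)}(+\infty)=-\infty$), while the paper's method has the mild advantage of simultaneously yielding the same increasing-then-decreasing structure for every intermediate derivative. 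Your caveat about convergence is appropriate and applies equally to the paper's argument; in the applications here the relevant series are entire.
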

\begin{proof}
	From $f^{(k_0+1)}(x)=\sum_{k=k_0+1}^{+\infty}a_kx^k<0,$ we conclude that $f^{(k_0+1)}(x)$ is monotone decreasing function and, since $f^{(k_0)}(0)=a_{k_0}>0$ and $f^{(k_0)}(+\infty)=-\infty,$ we infer that $f^{(k_0-1)}$ is increasing till some point and then decreasing. If we continue with this argument, by using that $f^{(l)}(0)>0$ and  $f^{(l)}(+\infty)=-\infty$ for $l\leq k_0$, it is easily seen that the same holds for $f(x)$ and we are done. 
\end{proof}

Applying this lemma to $g(y)$ we see that it is positive and then eventually negative on $[0,y']$, where $y'$ is the largest solution 
of 
 $$\varphi(y):=\cosh y-\frac{2}{s}\bigg(\frac{2}{s}\frac{\cosh^{\frac{p}{s}-1}(\frac{sy}{2})\sinh (\frac{sy}{2})}{\sinh y}\bigg)^{\frac{2}{p-2}}=1.$$
Let us note first that $y', $ by the Lemma \ref{ocenaipsilona4b} is well defined. Also, for $y>y'$ it is not possible that $\varphi(y)\leq 1$, since continuity of $\varphi(y)$ implies the existence of some $y_0>y'$ with $\varphi(y_0)=1,$ contrary to the definition of the $y'.$ This excuses why it is enough to prove (\ref{hiperbolickeosnovna}) on $[0,y'].$

By the Lemma \ref{ocenaipsilona4b}, we have $y'<\cosh^{-1}\big(\frac{1}{\cos\frac{\pi}{p}}\big).$ Since $G(y)$ is monotone decreasing and then increasing, it means that for showing that $G(y)\leq\frac{s}{2}$ for $y \in [0,y']$ it is enough to prove it for $y=y'.$ (Note that $G(0):=\lim_{y \rightarrow 0}G(y)=\frac{s}{2}.$) For $y=y',$ from $\varphi(y')=1$ we get $1-(\frac{2}{s})^{\frac{p}{p-2}}G(y')^{\frac{2}{p-2}}=\frac{1}{\cosh y'}$, while $1-(\frac{2}{s})^{\frac{p}{p-2}}G(0)^{\frac{2}{p-2}}=\cos\frac{\pi}{p}.$ Thus, $G(y')\leq G(0)$ is equivalent to
$$\frac{1}{\cosh y'}\geq \cos\frac{\pi}{p},$$
which easily follows from already deduced $y'<\cosh^{-1}\big(\frac{1}{\cos\frac{\pi}{p}}\big).$\\
 \textbf{Remark}. We could work with $y$ from the whole interval $[0,\cosh^{-1}\big(\frac{1}{\cos\frac{\pi}{p}}\big)].$

\section{\textbf{The function $y_p(t)$ is well defined, differentiable and monotone decreasing}}

This section contains detailed explanation on the definition of the function $y_p(t)$ and its basic properties that we used here. In order to do this, we will 
consider the sign of the $\frac{\partial \Phi}{\partial y}$ for $y>0.$ It is the same as the sign of
$$\varphi_t(y):=\varphi(y)-\cos t=\cosh y-\cos t-\frac{2}{s}\bigg(\frac{2}{s}\frac{\cosh^{\frac{p}{s}-1}(\frac{sy}{2})\sinh (\frac{sy}{2})}{\sinh y}\bigg)^{\frac{2}{p-2}}.$$

By Lemma \ref{ocenaipsilona4b} for $p>4$ we always have $\varphi_t(y)>0$ for $y\geq \cosh^{-1}\big(\frac{1}{\cos\frac{\pi}{p}}\big),$ hence
$\frac{\partial \Phi}{\partial y}>0,$ $\Phi$ increases in $y$ and its minimum in $y$ for fixed $t$ is achieved for some $y_p(t) \in (0,\cosh^{-1}\big(\frac{1}{\cos\frac{\pi}{p}}\big)).$ But, we need to explore further the function
$$\varphi(y)=\cosh y-\frac{2}{s}f(y)^{\frac{2}{p-2}}$$
to conclude that such $y_p(t)$ exists and that it is unique. 

We proceed with the following lemma.
\begin{lemma}
	\label{monot}
	The function $\varphi(y)$ is monotone increasing for $y \in (0,\cosh^{-1}\big(\frac{1}{\cos\frac{\pi}{p}}\big))$ for $p>4$, while for $2\leq p \leq 4$ it is increasing on the whole $(0,+\infty).$	
\end{lemma}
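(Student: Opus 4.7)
The plan is to factor $\varphi(y)=\cosh(y)\cdot h(y)$, where
\[ h(y)=1-\left(\frac{2}{s}\right)^{p/(p-2)}G(y)^{2/(p-2)}, \]
with $G$ the function whose monotonicity was already analysed in Section 6. At $y=0$ one has $G(0)=s/2$ as a limit, and combined with $s=\csc^{2}\frac{\pi}{2p}$ this gives $h(0)=1-2/s=\cos\frac{\pi}{p}$. The bound $G(y)\leq s/2$ on $[0,y']$ proved in Section 6 then yields $h(y)\geq \cos\frac{\pi}{p}>0$ on the relevant range, so both factors in $\varphi$ are positive and it suffices to track their monotonicity.

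For $2\leq p\leq 4$, Section 6 established that $G$ is strictly decreasing on all of $(0,+\infty)$. Hence $h$ is strictly increasing, and since both $\cosh(y)$ and $h(y)$ are positive and strictly increasing, so is their product $\varphi$ on $(0,+\infty)$. This settles the easy case.

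For $p>4$, Section 6 shows that $G$ is strictly decreasing on $[0,y_G]$ and strictly increasing on $[y_G,+\infty)$ for some interior minimum $y_G>0$. On $[0,y_G]$ the above argument still yields $\varphi'(y)>0$. The remaining regime is $y\in [y_G,\cosh^{-1}(1/\cos\frac{\pi}{p}))$, where $h'(y)<0$ and the positive contribution of $\sinh(y)h(y)$ must dominate. There I would differentiate directly,
\[ \varphi'(y)=\sinh(y)h(y)-\frac{2}{p-2}\left(\frac{2}{s}\right)^{p/(p-2)}\cosh(y)\,G(y)^{2/(p-2)-1}G'(y), \]
clear positive factors, and reduce $\varphi'(y)\geq 0$ to positivity of an explicit combination of hyperbolic sines and cosines in $y$. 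The idea is then to mimic the Taylor-coefficient sign-change technique used on $g(y)$ in Section 6 and invoke Lemma \ref{koeficijentimonotonost} on the resulting series, closing the endpoint estimate at $y=\cosh^{-1}(1/\cos\frac{\pi}{p})$ via the bound $\varphi(\cosh^{-1}(1/\cos\frac{\pi}{p}))>1$ from Lemma \ref{ocenaipsilona4b}.

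The hard part will be the $p>4$ step: one has to choose the auxiliary function so that its Taylor coefficients exhibit the single sign-change pattern required by Lemma \ref{koeficijentimonotonost}, which involves the same kind of delicate cancellations between $(s-2)^{2k+1}$, $s^{2k+1}$ and $2^{2k+1}$ that arose in the analysis of $g(y)$. The endpoint verification at $\cosh^{-1}(1/\cos\frac{\pi}{p})$ should then reduce to the relation $G(y')^{2/(p-2)}(2/s)^{p/(p-2)}=1-1/\cosh(y')$ obtained by substituting $\varphi(y')=1$.
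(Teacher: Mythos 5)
Your factorization $\varphi(y)=\cosh(y)\,h(y)$ with $h(y)=1-\left(\tfrac{2}{s}\right)^{p/(p-2)}G(y)^{2/(p-2)}$ is correct, and your argument for $2<p\leq 4$ is both valid and cleaner than the paper's: since $G$ is decreasing on $(0,\infty)$ (Section 5), $h$ is increasing with $h(y)\geq h(0)=\cos\tfrac{\pi}{p}>0$, so $\varphi$ is the product of two positive increasing functions and hence increasing. The same reasoning covers $p>4$ on the initial interval $[0,y_G]$ where $G$ still decreases. All of this uses the same two inputs as the paper (monotonicity of $G$ and the bound $G\leq s/2$), repackaged in a nicer way.

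The gap is the case $p>4$, $y>y_G$, which you only sketch. Your plan is to differentiate $\varphi=\cosh\cdot h$ directly and feed the result into the Taylor-coefficient sign-change machinery (Lemma~\ref{koeficijentimonotonost}). But after differentiation you are left with
$\varphi'(y)=\sinh(y)h(y)-\tfrac{2}{p-2}\left(\tfrac{2}{s}\right)^{p/(p-2)}\cosh(y)\,G(y)^{2/(p-2)-1}G'(y)$,
which contains the factor $G(y)^{2/(p-2)-1}$ with a non-integer, $p$-dependent exponent. That factor does not admit a clean hyperbolic power-series expansion, so "clearing positive factors" will not reduce $\varphi'$ to the kind of finite combination of $\sinh((s-2)y)$, $\sinh(sy)$, $\sinh(2y)$ that Lemma~\ref{koeficijentimonotonost} was designed for. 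Invoking $\varphi(\cosh^{-1}(1/\cos\tfrac{\pi}{p}))>1$ as an "endpoint estimate" also does not help here: a lower bound on the value of $\varphi$ at the right endpoint gives no information about monotonicity on the interval.

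The paper closes this exact gap differently: instead of differentiating the factored form, it starts from $\varphi'(y)=\sinh y - \tfrac{4}{s(p-2)}\tfrac{f(y)^{2/(p-2)}}{\sinh y\,\sinh(sy)}\big[p\sinh^2(\tfrac{sy}{2})\sinh y + s\sinh y - \cosh y\,\sinh(sy)\big]$ and, precisely in the dangerous regime where the bracket is positive, it \emph{substitutes} the same bound you proved, $f(y)^{2/(p-2)}\leq\cosh y$ (equivalently $h(y)\geq 0$), into the formula. Since the second term enters with a minus sign, this replacement only decreases $\varphi'$, so it suffices to show the resulting lower bound is nonnegative. That lower bound is free of fractional powers and, after clearing denominators and using that $\tfrac{\cosh y\,\sinh(sy/2)}{\sinh y\,\cosh(sy/2)}$ is decreasing, reduces to the elementary inequality $(s-1)\sinh y\leq\sinh((s-1)y)$. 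The argument is uniform in $p$ (the only $p$-dependence is the range of $y$ where $G\leq s/2$ is available) and avoids any further Taylor-coefficient analysis. So the missing idea in your proposal is to use $G\leq s/2$ not merely to make $h$ positive, but as a \emph{substitution inside} $\varphi'(y)$ that eliminates the problematic fractional power.
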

\begin{proof}
From
$$\varphi'(y)=\sinh y-\frac{2}{s}\frac{2}{p-2}f(y)^{\frac{2}{p-2}-1}f'(y)$$
and 
$$f'(y)=\frac{\cosh^{\frac{p}{s}-2}\big(\frac{sy}{2}\big)}{s \sinh^2 y}\bigg[ p\sinh^2\big(\frac{sy}{2}\big)\sinh y+ s \sinh y-\cosh y\sinh\big(sy\big)\bigg],  $$
we have:
$$\varphi'(y)=\sinh y-\frac{4}{s(p-2)}\frac{f(y)^{\frac{2}{p-2}}}{\sinh y \sinh\big(sy\big) }\bigg[ p\sinh^2\big(\frac{sy}{2}\big)\sinh y+ s \sinh y-\cosh y\sinh\big(sy\big)\bigg].$$
It is obvious that if $p\sinh^2\big(\frac{sy}{2}\big)\sinh y+ s \sinh y-\cosh y\sinh\big(sy\big)\leq0,$ we have $\varphi'(y)\geq 0,$ while in the other case we use 
the estimate $f(y)^{\frac{2}{p-2}}\leq \cosh y$ to get:
$$\varphi'(y)=\sinh y-\frac{4}{s(p-2)}\frac{\cosh y}{\sinh y \sinh\big(sy\big) }\bigg[ p\sinh^2\big(\frac{sy}{2}\big)\sinh y+ s \sinh y-\cosh y\sinh\big(sy\big)\bigg].$$
We see that a proof of our lemma depends on the main estimate for hyperbolic functions and that is why we have the restriction for the domain in case $p>4.$

Therefore,  $\varphi'(y)\geq 0$ follows from
$$p \sinh(2y)\frac{\cosh(s y)-1}{2}+s \sinh(2y)-\sinh(sy)\cosh(2y)-\sinh(sy)$$
$$\leq \frac{s}{2}\big(\frac{p}{2}-1\big)\big(\cosh(2y)-1\big)\sinh(sy).$$
This is equivalent with 
$$-1+\frac{p}{2}\frac{\cosh y\sinh(\frac{s y}{2})}{\sinh y\cosh(\frac{s y}{2})}+\frac{s\sinh(2y)-2\sinh(s y)}{\sinh(s y)(\cosh(2y)-1)}\leq \frac{s}{2}\big(\frac{p}{2}-1\big).$$
Since $\frac{\cosh y\sinh(\frac{s y}{2})}{\sinh y\cosh(\frac{s y}{2})}$
is monotone decreasing (its derivative is equal to  $\\\frac{s\sinh(2y)-2\sinh(sy)}{4\sinh^2y\cosh^2(\frac{s y}{2})}\leq 0$), we conclude
\begin{equation}
\label{monotonost1}
\big(\frac{p}{2}-1\big)\frac{\cosh y\sinh(\frac{s y}{2})}{\sinh y\cosh(\frac{s y}{2})}\leq \frac{s}{2}\big(\frac{p}{2}-1\big).
\end{equation}
Therefore, it is enough to prove 
\begin{equation}
\label{monotonost2}
-1+\frac{\cosh y\sinh(\frac{s y}{2})}{\sinh y\cosh(\frac{s y}{2})}+\frac{s\sinh(2y)-2\sinh(s y)}{\sinh(s y)(\cosh(2y)-1)}\leq 0.
\end{equation}
After multiplying both sides by $4\sinh^2y\sinh(\frac{sy}{2})\cosh(\frac{sy}{2}),$ we get
\begin{align*}
&-\sinh(sy)-\sinh((s-2)y)+(s-1)\sinh(2y)\\
&=2\cosh y\bigg((s-1)\sinh(y)-\sinh((s-1)y)\bigg)\leq 0.
\end{align*}

In any case, we can conclude the main result of this section using Lemma \ref{ocenaipsilona4b} and Lemma \ref{monot}. For $2\leq p \leq 4,$ $\varphi(y)$ is increasing and consequently the same holds for $\varphi_t(y)$ with fixed $t \in (0,\frac{\pi}{p}),$ therefore $\lim_{y \rightarrow 0}\varphi_t(y)=\frac{s-2}{s}-\cos t=\cos\frac{\pi}{p}-\cos t<0$	and $\lim_{y \rightarrow +\infty}\varphi_t(y)=+\infty$ gives the existence of the unique zero $y_p(t)$  of the considered function. The Implicit function theorem gives also that $y_p(t)$ is differentiable. 
	
For $p>4,$ Lemma \ref{ocenaipsilona4b} gives $\varphi_t(y)=\varphi(y)-\cos t>1-\cos t>0,$ for $y \geq y'$ while for $y \in (0,y')),$ we have $\lim_{y \rightarrow 0}\varphi_t(y)=\frac{s-2}{s}-\cos t=\cos\frac{\pi}{p}-\cos t<0$ and $\varphi_t(y')=\varphi(y')-\cos t>1-\cos t>0$ which imply, by the Lemma \ref{ocenaipsilona4b}, that there exists the unique zero $y_p(t) \in (0,y')$ of $\varphi_t(y)$ for each $t \in (0,\frac{\pi}{p}).$ Again, by the Implicit function theorem, we conclude the differentiability of $y_p(t)$ on $(0,\frac{\pi}{p}).$ Observe that our investigations approve that 
for fixed $t$ the minimum of $\Phi$ as a function of $y$ is achieved at the $y_p(t),$ i.e. $\Phi(y,t)\geq\Phi(y_p(t),t).$

Monotonicity of $y_p(t)$ follows from the monotonicity properties of $\varphi(y)$ and cosine function. This concludes the proof of our lemma and give an excuse for using the function $y_p(t).$ 
\end{proof}
\textbf{Remark}. Without the conclusion that $y_p(t)$ is well defined and differentiable, it is still possible to prove our Theorem. In fact, by inspecting our inequalities we see that we have exactly one non-trivial stationary point (trivial ones are $(0,0),$ $(y',0)$). However, then we have to check the positivity of $\Phi(y,t)$ on boundary point of rectangle $\Pi:=\{(y,t) \in [0,y']\times [0,\frac{\pi}{p}]\}.$ We will use this observation for $2\leq p\leq 4$ and the appropriate "small" rectangle at the end of the next section.
	
\section{\textbf{Proof of the Lemma 1 for $s>\csc^2\frac{\pi}{2p}$ and $p\geq 2$}}

Now, we will consider the case $s>\csc^2\frac{\pi}{2p}$. Our problem was reduced in Section 3 to proving the inequality:
\begin{equation}
\label{glavnaopsta}
\Phi(y,t)=-\cosh^{\frac{p}{s}}\big(\frac{s y}{2}\big)+C_{p,s}\big(\cosh y-\cos t\big)^{\frac{p}{2}}+D_{p,s}\cos\frac{tp}{2}\geq 0,
\end{equation} 
for $y \geq 0$ and $0 \leq t \leq \frac{2\pi}{p}.$

The constants $C_{p,s}$ and $D_{p,s}$ are chosen in such way that the local minimum is attained for $t=\frac{\pi}{p}.$ This exactly means that $C_{p,s}$ is the maximum of the function $K(y)$ defined in (\ref{Kfunkcija}), which is attained for some $\tilde{y}>0,$ while $D_{p,s}$ we find from the condition that $\frac{\partial \Phi}{\partial t}(\tilde{y},\frac{\pi}{p})=0.$ Note that $C_{p,s}>K(0)=\frac{1}{\big(1-\cos \frac{\pi}{p}\big)^{\frac{p}{2}}}=\big(\frac{s}{2}\big)^{\frac{p}{2}}.$
Also, $C_{p,s},$ by its definition, satisfies  $\frac{\partial \Phi}{\partial y}(\tilde{y},\frac{\pi}{p})=0.$ (In fact, this is how we find the value of $\tilde{y}.$)

\subsection{\textit{Main idea of our proof and reductions to inequalities for hyperbolic functions}}
We will follow the approach from the critical case. For fixed $t$ we will look for $y>0$ such that $\Phi(y,t)$ attains its minimal value. As we will see, such $y=y_p(t)$ exists and it is unique. More precisely,
$$\frac{2}{p}\frac{\partial \Phi}{\partial y}(y(t),t)=-\cosh^{\frac{p}{s}-1}\big(\frac{s y}{2}\big)\sinh\big(\frac{s y}{2}\big)+C_{p,s}\big(\cosh y-\cos t\big)^{\frac{p}{2}-1}\sinh y=0.$$
To prove that for fixed $t$ such $y_p(t)$ exists it is enough to show that the function 
$$\varphi(y)=\cosh y-\bigg(\frac{\cosh^{\frac{p}{s}-1}\big(\frac{s y}{2}\big)\sinh\big(\frac{s y}{2}\big)}{C_{p,s}\sinh y}\bigg)^{\frac{2}{p-2}}$$
is monotone increasing. We will prove this fact later.

%\begin{align*}
%\frac{2}{p}\frac{\partial \Phi}{\partial y}(y,t)&=C_{p,s}\big(\cosh y-\cos t\big)^{\frac{p}{2}-1}\sinh y-\sinh\big(\frac{s y}{2}\big)\cosh^{\frac{p}{s}-1}\big(\frac{s y}{2}\big)\\
%&\leq C_{p,s} \big(\cosh y-\cos\frac{\pi}{p}\big)^{\frac{p}{2}-1}\sinh y-\sinh\big(\frac{s y}{2}\big)\cosh^{\frac{p}{s}-1}\big(\frac{s y}{2}\big)\\
%&=\big(\cosh y-\cos\frac{\pi}{p}\big)^{\frac{p}{2}}\bigg(\frac{C_{p,s}\sinh y}{\cosh y-\cos\frac{\pi}{p}}-\frac{\sinh\frac{s y}{2}}{\cosh\frac{s y}{2}}K(y)\bigg)\\
%&\leq C_{p,s}\big(\cosh y-\cos\frac{\pi}{p}\big)^{\frac{p}{2}}\bigg(\frac{\sinh y}{\cosh y-\cos\frac{\pi}{p}}-\frac{\sinh\frac{s y}{2}}{\cosh\frac{s y}{2}}\bigg)\\
%&=  \frac{C_{p,s}\sinh\frac{s y}{2}\big(\cosh y-\cos\frac{\pi}{p}\big)^{\frac{p}{2}}}{\cosh\frac{s y}{2}\big(\cosh y-\cos\frac{\pi}{p}\big)} \bigg(\cos\frac{\pi}{p}-\frac{\sinh\big(\frac{(s-2)y}{2}\big)}{\sinh\big(\frac{s y}{2}\big)}\bigg).
%\end{align*}
For $t \in (0,\frac{\pi}{p})$ and $y \in (0,\tilde{y})$, we have 
\begin{align*}
\frac{2}{p}\frac{\partial \Phi}{\partial t}(y,t)&=\sin t\bigg(C_{p,s}\big(\cosh y-\cos t\big)^{\frac{p}{2}-1}-D_{p,s}\frac{\sin\frac{tp}{2}}{\sin t}\bigg)\\
&\leq \sin t\bigg(C_{p,s}\big(\cosh \tilde{y}-\cos\frac{\pi}{p}\big)^{\frac{p}{2}-1}-D_{p,s}\frac{1}{\sin\frac{\pi}{p}}\bigg)=0,
\end{align*}
since, by Lemma \ref{odnossinusa}, $\frac{1}{\sin t}\frac{\partial \Phi}{\partial t}(y,t)$ increases both in $y$ and $t$ and by the definition of $\tilde{y},$ the last expression is equal to zero. Similarly, we prove $\frac{\partial \Phi}{\partial t}(y,t)\geq 0,$ for $t\geq \frac{\pi}{p}$ and $y\geq \tilde{y}.$ In both cases, we have $\Phi(y,t)\geq \Phi(y,\frac{\pi}{p}),$ which is $\geq 0,$ by Lemma \ref{lemaoKfunkciji}. 
Now, $\varphi(\tilde{y})=\cos\frac{\pi}{p}<1$ and Lemma \ref{ocenaipsilona4b} (for $p\geq 4$) give the existence of the largest solution of $\varphi(y)=1$ with $y>\tilde{y},$ which we will denote by $y'.$ (Later we will again conclude that such $y'$ is unique! For $2\leq p< 4$ we will not consider such quantity.) Also, as in the Section 5, we see that $\varphi(y)=\cos t$ cannot have any solution for $y>y'$ (for any $t$) and $\varphi(y)=\cos t$ with $t \in (0,\frac{\pi}{p})$ makes sense only for $y \in (\tilde{y},y').$ 
%By the proof of Lemma \ref{lemaoKfunkciji}, the expression in the last brackets increases in $y$ and is equal to $0$ for $y=\tilde{y}$; hence $\frac{\partial \Phi}{\partial y}(y,t)< 0$ for 
%$t \in [0,\frac{\pi}{p})$ and $y \in (0,\tilde{y})$. 
%, then this means that $y'$ cannot be $\leq \tilde{y}.$ Therefore, $y'>\tilde{y}.$ 
After we establish the existence of the function $y_p(t),$ next step is considering $\Phi(y_p(t),t).$ We want to prove that its derivative $\frac{d \Phi}{d t} (y_p(t),t)$ is $\leq 0,$ for $0\leq t\leq\frac{\pi}{p},$ which is equivalent to
$$C_{p,s}\big(\cosh y_p(t)-\cos t\big)^{\frac{p}{2}-1}\leq D_{p,s}\frac{\sin \frac{tp}{2}}{\sin t},$$
for $0 \leq t \leq \frac{\pi}{p}.$  
Using Lemma \ref{odnossinusa} we stand at
$$ \frac{\sin \frac{tp}{2}}{\sin t} \geq E_p \cos^{\frac{p}{2}-1}t $$
for all $t \in [0,\frac{\pi}{p}]$ and $E_p=\frac{1}{\sin\frac{\pi}{p}\cos^{\frac{p}{2}-1}\frac{\pi}{p}}.$
Hence, it is enough to prove
$$D_{p,s}E_p\bigg[\cosh y-\bigg(\frac{\cosh^{\frac{p}{s}-1}\big(\frac{s y}{2}\big)\sinh\big(\frac{s y}{2}\big)}{C_{p,s}\sinh y}\bigg)^{\frac{2}{p-2}}\bigg]^{\frac{p}{2}-1}\geq \frac{\cosh^{\frac{p}{s}-1}\big(\frac{s y}{2}\big)\sinh\big(\frac{s y}{2}\big)}{\sinh y}. $$
Again, our problem of monotonicity on $t$ is equivalent to some inequality with $y$! 

From the conditions that define both $C_{p,s}$ and $D_{p,s},$ we have the identities:
\begin{equation}
\label{Cp}
C_{p,s}=\frac{\cosh^{\frac{p}{s}}\big(\frac{s \tilde{y}}{2}\big)}{\big(\cosh \tilde{y}-\cos\frac{\pi}{p}\big)^{\frac{p}{2}}}=\frac{\cosh^{\frac{p}{s}-1}\big(\frac{s \tilde{y}}{2}\big)\sinh \big(\frac{s \tilde{y}}{2}\big)}{\sinh \tilde{y}\big(\cosh\tilde{y}-\cos\frac{\pi}{p}\big)^{\frac{p}{2}-1}}
\end{equation}
and 
\begin{equation}
\label{Dp}
D_{p,s}=C_{p,s}\big(\cosh \tilde{y}-\cos\frac{\pi}{p}\big)^{\frac{p}{2}-1}\sin\frac{\pi}{p}.
\end{equation}
From these two identities, we conclude $\cosh\big(\frac{s\tilde{y}}{2}\big)=\frac{\sinh\big(\frac{s\tilde{y}}{2}\big)}{\sinh\tilde{y}}\big(\cosh\tilde{y}-\cos\frac{\pi}{p}\big)$ and $D_{p,s}E_p=C_{p,s}\big(\frac{\cosh\tilde{y}-\cos\frac{\pi}{p}}{\cos\frac{\pi}{p}}\big)^{\frac{p}{2}-1}.$

What we have to prove is now equivalent with
\begin{align*}
\cosh y &\geq \bigg(\frac{\cosh^{\frac{p}{s}-1}\big(\frac{s y}{2}\big)\sinh\big(\frac{s y}{2}\big)}{C_{p,s}\sinh y}\bigg)^{\frac{2}{p-2}}\bigg(1+\frac{\cos\frac{\pi}{p}}{\cosh\tilde{y}-\cos\frac{\pi}{p}}\bigg)\\
&\geq \cosh\tilde{y} \bigg(\frac{\cosh^{\frac{p}{s}-1}\big(\frac{s y}{2}\big)\sinh\big(\frac{s y}{2}\big)\sinh\tilde{y}}{\cosh^{\frac{p}{s}-1}\big(\frac{s\tilde{y}}{2}\big)\sinh\big(\frac{s\tilde{y}}{2}\big)\sinh y}\bigg)^{\frac{2}{p-2}}
\end{align*}
or
\begin{equation}
\label{prvaG}
G(y)\leq G(\tilde{y}),\quad \text{for}\quad \tilde{y} \leq y\leq y',
\end{equation}
where
\begin{equation}
\label{G}
G(y)=\frac{\cosh^{\frac{p}{s}-1}\big(\frac{s y}{2}\big)\sinh\big(\frac{s y}{2}\big)}{\cosh^{\frac{p}{2}-1} y\sinh y};
\end{equation}
This restriction of values for $y$ holds for $p\geq 4,$ while for $2\leq p\leq 4$ this will not be important.

In the case when $s>\csc^2\frac{\pi}{2p},$ a reduction of the main inequality to the case $0\leq t\leq \frac{\pi}{p}$ in the same way as for $s=\csc^2\frac{\pi}{2p}$ at the beginning of the Section 4 is not possible. This is caused by the fact that the maximum of the function $K(y)$ is attained for some $\tilde{y}>0!$ Thus, we need to prove that the function $y_p(t)$ is still well defined, decreasing and differentiable for some values of $t \geq \frac{\pi}{p}.$ Let us denote by $\alpha_p$ the largest number $t\leq \frac{2\pi}{p}$ such that $\varphi(y)=\cos t$ has a solution and $y''$ the smallest positive zero of $\varphi(y)=\cos\alpha_p$.
For $\frac{\pi}{p}\leq t \leq \alpha_p,$ the condition
$$C_{p,s}\big(\cosh y_p(t)-\cos t\big)^{\frac{p}{2}-1}\geq D_{p,s} \frac{\sin \frac{tp}{2}}{\sin t},$$
in sufficient for $\frac{d \Phi}{d t} (y_p(t),t)\geq 0$ (from which we conclude that $\Phi(t,y_p(t))$ has its minimum in $t=\frac{\pi}{p}$). However, the reader can easily see from the proof of Lemma \ref{odnossinusa} that for $t \in [\frac{\pi}{p},\frac{2\pi}{p}]$ the corresponding reverse estimate holds, i.e. 
$$ \frac{\sin \frac{tp}{2}}{\sin t} \leq E_p \cos^{\frac{p}{2}-1}t. $$
Proceeding similarly as in the case of $0\leq t\leq\frac{\pi}{p}$, we arrive at 
\begin{equation}
\label{drugaG}
G(y)\geq G(\tilde{y}),\quad\text{for some range of $y'$s}.
\end{equation}
In the next subsection we will see that we have to prove (\ref{drugaG}) for $y''\leq y\leq \tilde{y}$ if $p\geq 4.$

\subsection{\textit{Main inequalities for hyperbolic functions and existence of $y_p(t)$}}
Now, we will briefly analyse $G(y)$ using the conclusions from Section 5. We have proved that
$$G'(y) \leq 0 \iff g(y)=\sum_{k=1}^{+\infty}\bigg[\frac{p}{4}(s-2)^{2k+1}+\big(1-\frac{p}{4}\big)s^{2k+1}+\big(\frac{p}{4}-\frac{s}{2}\big)2^{2k+1}\bigg]\frac{y^{2k+1}}{(2k+1)!}\geq 0.
$$
Since $s > \csc^2\frac{\pi}{2p},$ we see that $p \geq 3$ implies $s> 4.$ The case of $2 \leq p \leq 3,\quad 2 \leq s \leq 4,$ we handle as $2 \leq p \leq 3$ in the critical case. For $s\geq 4$  and $2 \leq p \leq 4,$ we proceed as in the case $3 \leq p \leq 4$ in Section 5 again. Therefore, we conclude that all coefficients are positive and $g(y)>0,$ $G(y)$ is monotone decreasing and (\ref{prvaG}) and (\ref{drugaG}) for $y\geq \tilde{y}$ and $y\leq \tilde{y}$, respectively, easily follows for $2\leq p\leq 4.$ For $p>4,$ repeating the ideas from the earlier proof (in Section 5), we infer that $G(y)$ decreases till some point and then increases. 

The same argument as in the critical case confirms that, for our needs, it is enough to prove (\ref{prvaG}) for $\tilde{y}\leq y\leq y'.$ From monotonicity of $G(y),$ it follows from its proof for $y=y'.$ From $\frac{1}{\cosh y'}=1-C_{p,s}^{\frac{p}{2-p}}G(y')^{\frac{2}{p-2}}$ and $\frac{\cos\frac{\pi}{p}}{\cosh \tilde{y}}=1-C_{p,s}^{\frac{p}{2-p}}G(\tilde{y})^{\frac{2}{p-2}}$ and therefore, $G(y')\leq G(\tilde{y})$ is equivalent with 
$$\frac{1}{\cosh y'}\geq \frac{\cos\frac{\pi}{p}}{\cosh \tilde{y}},$$
which easily follows from $\tilde{y}>0$ and Lemma \ref{ocenaipsilona4b}, i.e. $y'<\cosh^{-1}\big(\frac{1}{\cos\frac{\pi}{p}}\big).$ 

However, since $G(y)$ is decreasing and then increasing, $G(y')\leq G(\tilde{y})$ implies that $\tilde{y}$ is inside the interval where $G(y)$ is decreasing (otherwise we will have $G(y') > G(\tilde{y})!$), i.e. it decreases on $[0,\tilde{y}]$.

Now, we consider monotonicity of the function 
$$
\varphi(y)=\cosh y-C_{p,s}^{-\frac{2}{p-2}}\bigg(\frac{\cosh^{\frac{p}{s}-1}\big(\frac{s y}{2}\big)\sinh\big(\frac{s y}{2}\big)}{\sinh y}\bigg)^{\frac{2}{p-2}}
=\cosh y-C_{p,s}^{-\frac{2}{p-2}}f(y)^{\frac{2}{p-2}},
$$
for $p\geq 4$ and $t \in [0,\frac{\pi}{p}].$ Since $$f'(y)=\frac{\cosh^{\frac{p}{s}-2}\big(\frac{s y}{2}\big)}{2\sinh^2 y}\bigg(p\sinh^2\big(\frac{s y}{2}\big)\sinh y+s \sinh y-\cosh y\sinh \big(s y\big)\bigg),$$
we find
$$\varphi'(y)= \sinh y-\frac{2f(y)^{\frac{2}{p-2}}}{(p-2)C_{p,s}^{\frac{2}{p-2}}\sinh y\sinh(s y)}\bigg(p\sinh^2\big(\frac{s y}{2}\big)\sinh y+s \sinh y-\cosh y\sinh \big(s y\big)\bigg).$$
Similarly as in the case of critical $s$, we infer $\varphi'(y)\geq 0,$ when the expression in the brackets is non-positive. When it is positive, from $\big(\frac{f(y)}{C_{p,s}}\big)^{\frac{2}{p-2}}\leq \frac{\cosh y}{\cosh \tilde{y}}\big(\cosh \tilde{y}-\cos\frac{\pi}{p}\big),$ for $y \in [\tilde{y},y'],$ we conclude that $\varphi'(y)\geq 0$ follows from
\begin{multline*}p \sinh(2y)\frac{\cosh(s y)-1}{2}+s \sinh(2y)-\sinh(s y)\cosh(2y)-\sinh(s y)\\
\leq \big(\frac{p}{2}-1\big)\frac{\cosh\tilde{y}}{\cosh \tilde{y}-\cos\frac{\pi}{p}}\big(\cosh(2y)-1\big)\sinh(s y),
\end{multline*}
which is equivalent to
$$-1+\frac{p}{2}\frac{\cosh y\sinh(\frac{s y}{2})}{\sinh y\cosh(\frac{s y}{2})}+\frac{s\sinh(2y)-2\sinh(s y)}{\sinh(s y)(\cosh(2y)-1)}\leq \big(\frac{p}{2}-1\big)\frac{\cosh\tilde{y}}{\cosh \tilde{y}-\cos\frac{\pi}{p}}.$$

Function $\frac{\cosh y\sinh(\frac{s y}{2})}{\sinh y\cosh(\frac{s y}{2})}$
is monotone decreasing (its derivative is equal to \\$\frac{s\sinh(2y)-2\sinh(s y)}{4\sinh^2y\cosh^2(\frac{s y}{2})}\leq 0$), hence:
$$\big(\frac{p}{2}-1\big)\frac{\cosh y\sinh(\frac{s y}{2})}{\sinh y\cosh(\frac{s y}{2})}\leq \big(\frac{p}{2}-1\big)\frac{\cosh \tilde{y}\sinh(\frac{s \tilde{y}}{2})}{\sinh \tilde{y}\cosh(\frac{s \tilde{y}}{2})}=\big(\frac{p}{2}-1\big)\frac{\cosh\tilde{y}}{\cosh \tilde{y}-\cos\frac{\pi}{p}}.$$

The last identity follows from (\ref{Cp}) and (\ref{Dp}). Note that this is an analog of (\ref{monotonost1}). Finally, the (\ref{monotonost2}) conclude the monotonicity of $\varphi(y).$
This implies that $y_p(t)$ is well defined for $t \in [0,\frac{\pi}{p}]$, monotone decreasing and differentiable, by Implicit function theorem. 

For $t \in [\frac{\pi}{p},\alpha_p]$ and $p\geq 4,$ we see that in
\begin{equation}
\label{phidrugizapis}
\varphi(y)=\cosh y\bigg[1-\bigg(\frac{\cosh^{\frac{p}{s}-1}\big(\frac{s y}{2}\big)\sinh\big(\frac{s y}{2}\big)}{C_{p,s}\cosh^{\frac{p}{2}-1} y\sinh y}\bigg)^{\frac{2}{p-2}}\bigg]=\cosh y\bigg(1-C_{p,s}^{\frac{2}{2-p}}G(y)^{\frac{2}{p-2}}\bigg)
\end{equation}
both factors are increasing and positive. Positivity of both factors can be seen from the fact that $\varphi(y)=\cos t\geq 0$ for $t \in [\frac{\pi}{p},\alpha_p].$ Also, we easily conclude that $y''$ is the only solution of $\varphi(y)=\cos\alpha_p,$ $\varphi(y)$ is increasing on $[y'',\tilde{y}]$, the existence of $y_p(t)$ for $t \in [\frac{\pi}{p},\alpha_p]$ and  $y_p([\frac{\pi}{p},\alpha_p])=[y'',\tilde{y}].$ This proves that it is enough to prove (\ref{drugaG}) for $y''\leq y\leq \tilde{y},$ although we have proved it for all $0\leq y\leq\tilde{y}.$

However, for $2\leq p\leq 4,$ we are not able to prove that $\varphi(y)$ is increasing for all $y.$ We can use the identity (\ref{phidrugizapis}) for each $y\geq\tilde{y}$ (by the same argument), while for $0\leq y\leq \tilde{y}$ we proceed in a slightly different manner. Namely, finding $y$ such that, for fixed $t$ we have $\frac{\partial \Phi}{\partial y}(y,t)=0$ we are, in fact, searching for the potential stationary points. Since (\ref{drugaG}) holds with equality just for $y=\tilde{y}$ (or $t=\frac{\pi}{p}$), we see that there is no such points in the rectangle $R:=\{(y,t) \in [0,\tilde{y}]\times[\frac{\pi}{p},\frac{2\pi}{p}]\}$ except $(y,t)=(\tilde{y},\frac{\pi}{p}).$ Hence, to prove main inequality (\ref{glavnaopsta}) we need to investigate its behaviour on the boundary of rectangle $R.$ 

For $y=\tilde{y}$ we see that $\frac{\partial \Phi}{\partial y}(\tilde{y},t)\geq \frac{\partial \Phi}{\partial y}(\tilde{y},\frac{\pi}{p})=0$ and $\Phi(\tilde{y},t)$ increases in $t;$ thus, $\Phi(y,t)$ is non-negative. 
For $y=0$ we have:
$$\Phi(0,t)=-1+C_{p,s}(1-\cos t)^{\frac{p}{2}}+D_{p,s}\cos\frac{t p}{2}.$$
Minimum in $t$ is achieved at the point where $C_{p,s}(1-\cos t)^{\frac{p}{2}-1}=\frac{D_{p,s}\sin\frac{tp}{2}}{\sin t}$, thus, we get:
\begin{align*}
\Phi(0,t)&=-1+\frac{D_{p,s}\sin\frac{tp}{2}(1-\cos t)}{\sin t}+D_{p,s}\cos\frac{tp}{2}\\
&=-1+\frac{D_{p,s}}{\sin t}\bigg(\sin\frac{tp}{2}-\sin(1-\frac{p}{2})t\bigg)\\
&=-1+D_{p,s}\frac{\sin\frac{(p-1)t}{2}}{\sin\frac{t}{2}}\geq -1+\frac{\sin\frac{\pi}{p}}{1-\cos\frac{\pi}{p}}\geq 0,
\end{align*}
since $\frac{\sin\frac{(p-1)t}{2}}{\sin\frac{t}{2}}$ is decreasing in $t$ (we consider this function for $t \in [\frac{\pi}{p},\frac{2\pi}{p}]$) and $D_{p,s}=C_{p,s}(\cosh\tilde{y}-\cos\frac{\pi}{p})^{\frac{p}{2}-1}\sin\frac{\pi}{p}\geq \big(\frac{s}{2}\big)^{\frac{p}{2}}\big(1-\cos\frac{\pi}{p}\big)^{\frac{p}{2}-1}\sin\frac{\pi}{p}=\frac{\sin\frac{\pi}{p}}{1-\cos\frac{\pi}{p}}.$ Note that the same also holds for $t \in [0,\frac{2\pi}{p}].$ 
This also excuses considering $y>0$ in our approach here for all $p\geq 2.$

For $t=\frac{\pi}{p}$, (\ref{glavnaopsta}) holds by the definition of $C_{p,s}$ and $D_{p,s}.$ Finally, if $t=\frac{2\pi}{p},$ from $\frac{\partial \Phi}{\partial t}(y,\frac{2\pi}{p})>0$ we infer that $\Phi(y,t)$ cannot have the absolute minimum in $R$ at the point of the form $(y,\frac{2\pi}{p})$; therefore, $\Phi(y,\frac{2\pi}{p})\geq 0$ for $0\leq y\leq \tilde{y}.$
This concludes the proof of (\ref{glavnaopsta}) and Theorem 1 for $p\geq 2.$.

\section{\textbf{Proof of the Lemma 1 for $s=\sec^2\frac{\pi}{2p}$ and $1<p\leq\frac{4}{3}$}}

Now, we prove (\ref{eq:NFR12}) for $1<p\leq \frac{4}{3}$ and $s=\sec^2\frac{\pi}{2p}.$ Our proof is very similar to that of $p\geq 2,$ but technically more delicate at some places. Our method works only in this range as can be seen from the proof, but the inequality that we intend to prove also does not hold for $p>\frac{4}{3}!$
Our main inequality is
\begin{equation}
\label{Phitreca}
\Phi(y,t)=-\cosh^{\frac{p}{s}}\frac{s y}{2}+\frac{(\cosh y+\cos t)^{\frac{p}{2}}}{(1+\cos\frac{\pi}{p})^{\frac{p}{2}}}-\tan\frac{\pi}{2p}\cos\frac{t p}{2} \geq 0,
\end{equation} 
for $y\geq 0$ and $t \in [0,\pi],$ as we already mention in Section 3.
Since
\begin{align*}
\frac{2}{p}\frac{\partial \Phi}{\partial t}(y,t)&=-\frac{(\cosh y+\cos t)^{\frac{p}{2}-1}\sin t}{(1+\cos\frac{\pi}{p})^{\frac{p}{2}}}+\tan\frac{\pi}{2p}\sin\frac{t p}{2}\\
 &\geq -\frac{(1+\cos t)^{\frac{p}{2}-1}\sin t}{(1+\cos\frac{\pi}{p})^{\frac{p}{2}}}+\tan\frac{\pi}{2p}\sin\frac{t p}{2},
\end{align*}
non-negativity of the last expression for $t \in [\frac{\pi}{p},\pi]$ follows from the next statement.

\begin{claim}
	The function 
$$\phi(t)=\frac{\sin\frac{t p}{2}}{\sin\frac{t}{2}\cos^{p-1}\frac{t}{2}}$$
is increasing and $\phi(t)\geq\phi(\frac{\pi}{p})$ for $t \in [\frac{\pi}{p},\pi].$
\end{claim}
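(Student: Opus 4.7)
The plan is to establish $\phi'(t)\geq 0$ on $[\pi/p,\pi]$ by a logarithmic-derivative calculation that reduces, after trigonometric simplification, to a monotonicity statement about a ratio of two sines — the very Claim proved inside Lemma \ref{odnossinusa}. First I would compute
$$\frac{\phi'(t)}{\phi(t)} = \frac{p}{2}\cot\frac{tp}{2} - \frac{1}{2}\cot\frac{t}{2} + \frac{p-1}{2}\tan\frac{t}{2},$$
then multiply by $4\sin\frac{tp}{2}\sin\frac{t}{2}\cos\frac{t}{2}$ (positive on the relevant interval since $1<p\leq\frac{4}{3}$ forces $\frac{tp}{2}\in[\frac{\pi}{2},\frac{p\pi}{2}]\subset(0,\pi)$), and simplify using $\cos^2\frac{t}{2}=\frac{1+\cos t}{2}$, $\sin^2\frac{t}{2}=\frac{1-\cos t}{2}$, together with $\cos A\sin B-\sin A\cos B=\sin(B-A)$. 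After cancellation, the sign condition $\phi'(t)\geq 0$ becomes
$$\frac{p}{2}\sin\frac{(2-p)t}{2} + \frac{p-2}{2}\sin\frac{tp}{2} \geq 0,$$
equivalently (flipping sign, since $p-2<0$) $\dfrac{\sin\frac{(2-p)t}{2}}{\sin\frac{tp}{2}} \geq \dfrac{2-p}{p}$.

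Next, I would apply the Claim from the proof of Lemma \ref{odnossinusa} with $\alpha=\frac{2-p}{2}$ and $\beta=\frac{p}{2}$: for $1<p\leq\frac{4}{3}$ we have $0<\alpha<\beta$, and $[\pi/p,\pi]\subset(0,2\pi/p)=(0,\pi/\beta)$, so the ratio $\sin(\alpha t)/\sin(\beta t)$ is strictly increasing on $[\pi/p,\pi]$. Consequently it suffices to verify the inequality at the left endpoint $t=\pi/p$, where it reads
$$\frac{\sin(\pi/p-\pi/2)}{\sin(\pi/2)} = -\cos\frac{\pi}{p} \geq \frac{2-p}{p},$$
i.e.\ $p\bigl(1-\cos\frac{\pi}{p}\bigr)\geq 2$.

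Finally, I would verify this elementary inequality on $1<p\leq\frac{4}{3}$. Substituting $u=\pi/p\in[\frac{3\pi}{4},\pi)$ turns it into $h(u)\geq \frac{2}{\pi}$ for $h(u):=\frac{1-\cos u}{u}$. Direct computation gives $h(\pi)=\frac{2}{\pi}$, while the numerator of $h'(u)$, namely $u\sin u-1+\cos u$, has its own derivative $u\cos u$, negative for $u>\pi/2$; since this numerator is already negative at $u=\frac{3\pi}{4}$, $h$ is strictly decreasing on $[\frac{3\pi}{4},\pi]$, so $h(u)\geq h(\pi)=\frac{2}{\pi}$ throughout. This gives $\phi'(t)\geq 0$, so $\phi$ is monotone increasing on $[\pi/p,\pi]$ and the pointwise bound $\phi(t)\geq\phi(\pi/p)$ is then automatic. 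I expect the main obstacle to be careful sign bookkeeping during the reduction (the $p-2<0$ flip) together with the verification of the elementary bound $p(1-\cos\frac{\pi}{p})\geq 2$, which saturates at $p=1$ — this saturation is precisely what explains why a method of this kind cannot be pushed below that threshold.
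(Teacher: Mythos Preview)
Your proof is correct and follows essentially the same route as the paper: both take the logarithmic derivative, reduce the sign condition to $\dfrac{\sin\frac{(2-p)t}{2}}{\sin\frac{tp}{2}}\geq\dfrac{2-p}{p}$, and invoke Claim~1 (with $\alpha=\tfrac{2-p}{2}$, $\beta=\tfrac{p}{2}$) to get monotonicity of that ratio. The paper's final step is slightly slicker: since Claim~1 gives monotonicity on all of $(0,2\pi/p)\supset[\pi/p,\pi]$ and the limit of the ratio at $t\to 0^+$ is exactly $\alpha/\beta=(2-p)/p$, the inequality is immediate---so your separate verification of $p(1-\cos\tfrac{\pi}{p})\geq 2$ at the left endpoint, while correct, is unnecessary.
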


\begin{proof}
We easily calculate 
$$\big(\log\phi(t)\big)'=\frac{p}{2\sin t\sin\frac{t p}{2}}\bigg[\frac{\sin(1-\frac{p}{2})t}{\sin\frac{t p}{2}}-\frac{2-p}{p}\bigg].$$
By the Claim 1 we see that $\frac{\sin(1-\frac{p}{2})t}{\sin\frac{t p}{2}}$ increases on $[0,\pi]$ and hence $\geq \frac{2-p}{p}.$ So, $\phi(t)$ increases and we are done.
\end{proof}
Now, we are reduced to the case of $t \in [0,\frac{\pi}{p}].$ For a fixed $t,$ we have
$$\frac{2}{p}\frac{\partial \Phi}{\partial y}(y,t)=\frac{(\cosh y+\cos t)^{\frac{p}{2}-1}\sinh y}{(1+\cos\frac{\pi}{p})^{\frac{p}{2}}}-\cosh^{\frac{p}{s}-1}\frac{s y}{2}\sinh \frac{s y}{2}$$
and by Lemma \ref{dobradef}, we see that there exists a unique $y_p(t)$ such that  	
\begin{equation}
\label{eq:st12}
\varphi(y)=\bigg[ \frac{\cosh^{\frac{p}{s}-1}\frac{s y}{2}\sinh \frac{s y}{2}}{\sinh y}(1+\cos\frac{\pi}{p})^{\frac{p}{2}}\bigg]^{\frac{2}{p-2}}-\cosh y=\cos t.
\end{equation}

This means that the function $\Phi(y,t)$ satisfies $\Phi(y,t)\geq\Phi(y_p(t),t);$ therefore, it is enough to prove that $\Phi(y_p(t),t)$ has its minimal value for $t=\frac{\pi}{p}.$ In fact, we will show that it is decreasing on $[0,\frac{\pi}{p}]$. From
$$\frac{2}{p}\frac{d}{d t}\Phi(y_p(t),t)=$$
$$y'_p(t)\bigg[-\cosh^{\frac{p}{s}-1}\bigg(\frac{sy_p(t)}{2}\bigg)\sinh\bigg(\frac{sy_p(t)}{2}\bigg)+\sinh y_p(t)\frac{(\cosh y_p(t)+\cos t)^{\frac{p}{2}-1}}{(1+\cos\frac{\pi}{p})^{\frac{p}{2}}}\bigg]-$$
$$\frac{(\cosh y_p(t)+\cos t)^{\frac{p}{2}-1}\sin t}{(1+\cos\frac{\pi}{p})^{\frac{p}{2}}}  +\tan\frac{\pi}{2p}\sin\frac{tp}{2}=-\frac{(\cosh y_p(t)+\cos t)^{\frac{p}{2}-1}\sin t}{(1+\cos\frac{\pi}{p})^{\frac{p}{2}}}+\tan\frac{\pi}{2p}\sin\frac{tp}{2},$$
we infer that it is a consequence of the inequality:
\begin{equation}
\label{monotonost12}
\tan\frac{\pi}{2p}\frac{\sin\frac{t p}{2}}{\sin t}  \leq \frac{(\cosh y_p(t)+\cos t)^{\frac{p}{2}-1}}{(1+\cos\frac{\pi}{p})^{\frac{p}{2}}}.
\end{equation}	
As in the case $p\geq 2,$ we will estimate the left-hand side in such a way that we can work with the appropriate range of $y'$s. The first step is given by the next
\begin{lemma}
	\label{sinus12}
	For $1<p\leq 2$ and $0 \leq t \leq \frac{\pi}{p},$ there holds the following inequality 
	$$\frac{\sin\frac{t p}{2}}{\sin t}\leq \frac{1}{\sin\frac{\pi}{p}}\bigg(\frac{c_p+\cos t}{c_p+\cos\frac{\pi}{p}}\bigg)^{\frac{p}{2}-1},$$
	where $c_p=\frac{\frac{p}{2}\sin^2\frac{\pi}{p}-1}{\cos\frac{\pi}{p}}.$
\end{lemma}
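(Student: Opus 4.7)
My approach is to pass to logarithms. Setting
$$F(t) := \log\sin\frac{tp}{2} - \log\sin t - \Bigl(\frac{p}{2}-1\Bigr)\log(c_p+\cos t),$$
the claim is equivalent to $F(t)\leq F(\pi/p)$ on $[0,\pi/p]$, since evaluating at $t=\pi/p$ reproduces exactly the right-hand side of the inequality in the lemma. The first bookkeeping step is the identity $c_p+\cos(\pi/p) = \frac{(p/2-1)\sin^2(\pi/p)}{\cos(\pi/p)}$, whose positivity follows from $p/2-1<0$ and $\cos(\pi/p)<0$. Plugged into
$$F'(t) = \frac{p}{2}\cot\frac{tp}{2} - \cot t + \Bigl(\frac{p}{2}-1\Bigr)\frac{\sin t}{c_p+\cos t},$$
this identity gives $F'(\pi/p) = -\cot(\pi/p)+\cot(\pi/p)=0$; in other words, the particular value of $c_p$ is chosen precisely so that the claim becomes tangent at the right endpoint.

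Next I would record that $F'(0^+)=0$ with leading behaviour $F'(t)\sim t(1-p/2)\frac{(p+2)(c_p+1)-6}{6(c_p+1)}$ as $t\to 0^+$, obtained by Taylor expanding $\cot$ to third order; a quick check shows that $(p+2)(c_p+1)-6$ equals zero at $p=1$ and is positive for $p\in(1,4/3]$, so $F'(t)>0$ just to the right of the origin. The plan is then to prove $F'(t)\geq 0$ on all of $(0,\pi/p)$, which gives $F(t)\leq F(\pi/p)$ immediately.

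To establish this sign condition I would multiply $F'(t)$ by the positive quantity $\sin t\,\sin\frac{tp}{2}\,(c_p+\cos t)$ and apply product-to-sum formulas: the first two terms of $F'$ combine into $\frac{p-2}{4}\sin\frac{t(p+2)}{2}+\frac{p+2}{4}\sin\frac{t(2-p)}{2}$, and substituting the explicit form of $c_p$ in the third term should produce a trigonometric polynomial whose non-negativity can be checked using the monotonicity principle of the Claim from the proof of Lemma \ref{odnossinusa}, together with elementary manipulations of ratios of the form $\sin(\alpha t)/\sin(\beta t)$.

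The main obstacle will be precisely this last reduction: unlike in Lemma \ref{odnossinusa}, where the post-multiplication expression factors immediately, the presence of $c_p$ inserts a cross-term $\sin^2 t\,\sin\frac{tp}{2}$ that does not combine neatly with the others. I expect the restriction $p\leq 4/3$ to enter exactly here, bounding the size of the negative third summand in $F'$ (which carries the sign $p/2-1<0$). Should a purely algebraic sign analysis prove too fragile, a robust fallback is to argue as in Section 6: since $F'(0^+)=F'(\pi/p)=0$, show that $F'$ has no other zeros on $(0,\pi/p)$ by an implicit function theorem argument, and then check positivity of $F'$ at a single convenient interior test point.
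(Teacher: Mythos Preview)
Your setup---pass to logarithms, reduce to $f'(t)\ge 0$, multiply by $\sin t\,\sin\tfrac{tp}{2}\,(c_p+\cos t)$---is exactly the paper's route. What you are missing is the next move: the multiplication produces the trigonometric polynomial
\[
\mathcal{F}(t)=\Bigl(\tfrac{p}{4}-1\Bigr)\sin\tfrac{tp}{2}+\tfrac{p}{4}\sin\!\Bigl(2-\tfrac{p}{2}\Bigr)t
+c_p\!\left[\tfrac{p-2}{4}\sin\!\Bigl(1+\tfrac{p}{2}\Bigr)t+\tfrac{p+2}{4}\sin\!\Bigl(1-\tfrac{p}{2}\Bigr)t\right],
\]
i.e.\ only four sines; the ``$c_p$ cross-term'' you worry about does not appear, because the $(c_p+\cos t)$ factor multiplies \emph{all} of $f'$, and the non-$c_p$ contributions of the first two cotangents combine with the $\sin^2 t$ piece to give the first two sines above. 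Differentiating once more yields
\[
\mathcal{F}'(t)=\frac{\sin t}{4}\Bigl[c_p(4-p^2)\sin\tfrac{tp}{2}-p(4-p)\sin\!\Bigl(1-\tfrac{p}{2}\Bigr)t\Bigr],
\]
whose sign is governed by the ratio $\sin(tp/2)/\sin((1-\tfrac p2)t)$. By Claim~1 this ratio is strictly decreasing on $(0,\pi/p)$, so $\mathcal{F}'$ changes sign at most once; together with $\mathcal{F}(0)=\mathcal{F}(\pi/p)=0$ this forces $\mathcal{F}\ge 0$ on the whole interval. No implicit-function fallback is needed.

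Two corrections to your expectations. First, the restriction $p\le 4/3$ plays \emph{no role} in this lemma: it is stated and proved for all $1<p\le 2$, and nothing in the argument above singles out $4/3$. Second, the paper's printed proof asserts $\mathcal{F}'(t)\le 0$ throughout, based on the identity $\cos(1-\tfrac p2)t-\cos(1+\tfrac p2)t=-2\sin t\sin\tfrac{tp}{2}$; that identity carries a sign error (the correct value is $+2\sin t\sin\tfrac{tp}{2}$), and in fact $\mathcal{F}'$ is positive near $0$ and negative near $\pi/p$, consistent with your Taylor computation. The unimodality argument via Claim~1 repairs this at once.
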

\begin{proof}
	Let us consider the function
	$$f(t):=\log\frac{\sin\frac{t p}{2}\sin\frac{\pi}{p}}{\sin t}-\big(\frac{p}{2}-1\big)\log\bigg(\frac{c_p+\cos t}{c_p+\cos\frac{\pi}{p}}\bigg).$$
	Since $f(\frac{\pi}{p})=0,$ it is enough to prove $f'(t)=\frac{p\cos\frac{t p}{2}}{2\sin\frac{t p}{2}}-\frac{\cos t}{\sin t}+(\frac{p}{2}-1)\frac{\sin t}{c_p+\cos t} \geq 0.$	
	
	From
	\begin{align*}
	F(t)&=f'(t)\sin\frac{t p}{2}\sin t(c_p+\cos t)\\
	&=\frac{p}{2}\cos\frac{t p}{2}\sin t\big(c_p+\cos t\big)-\sin\frac{t p}{2}\cos t\big(c_p+\cos t\big)+(\frac{p}{2}-1)\sin^2 t\sin\frac{t p}{2}\\
	&=\big(\frac{p}{4}-1\big)\sin\frac{t p}{2}+\frac{p}{4}\sin\big(2-\frac{p}{2}\big)t+c_p\bigg(\frac{p-2}{4}\sin\big(1+\frac{p}{2}\big)t+\frac{p+2}{4}\sin\big(1-\frac{p}{2}\big)t\bigg)
	\end{align*}
	we infer 
	$$F'(t)=\frac{p}{2}\big(1-\frac{p}{4}\big)\bigg(\cos\big(2-\frac{p}{2}\big)t-\cos\frac{t p}{2}\bigg)+c_p\frac{4-p^2}{8}\bigg(\cos\big(1-\frac{p}{2}\big)t-\cos\big(1+\frac{p}{2}\big)t\bigg)\leq 0, $$
	since 
	$$ \cos\big(2-\frac{p}{2}\big)t-\cos\frac{t p}{2}= -2 \sin t\sin(1-\frac{p}{2})t\leq 0,$$
	$$ \cos\big(1-\frac{p}{2}\big)t-\cos\big(1+\frac{p}{2}\big)t= -2 \sin t\sin\frac{t p}{2} \leq 0$$
	and $F(\frac{\pi}{p})=0$. Hence, $F(t)\geq 0$ and the Lemma \ref{sinus12} follows. 	
\end{proof}
The inequality (\ref{monotonost12}) follows from the slightly stronger
$$\frac{\tan\frac{\pi}{2p}}{\sin\frac{\pi}{p}}\bigg(\frac{c_p+\cos t}{c_p+\cos\frac{\pi}{p}}\bigg)^{\frac{p}{2}-1} \leq \frac{(\cosh y_p(t)+\cos t)^{\frac{p}{2}-1}}{(1+\cos\frac{\pi}{p})^{\frac{p}{2}}}$$
equivalent to 
$$\frac{c_p+\cos t}{c_p+\cos\frac{\pi}{p}}\geq \frac{\cosh y_p(t)+\cos t}{1+\cos\frac{\pi}{p}},$$
which, after setting $\frac{(\cosh y+\cos t)^{\frac{p}{2}-1}\sinh y}{(1+\cos\frac{\pi}{p})^{\frac{p}{2}}}-\cosh^{\frac{p}{s}-1}\frac{s y}{2}\sinh \frac{s y}{2}=0,$
reduces to
\begin{equation}
\label{eq:varphi12}
\psi(y)=c_p-\cosh y+(1-c_p)\bigg(\frac{2\cosh^{\frac{p}{s}-1}\frac{s y}{2}\sinh \frac{s y}{2}}{s \sinh y}\bigg)^{\frac{2}{p-2}} \geq 0.
\end{equation} 
It is obvious that $\psi(0)=0.$ We will prove (\ref{eq:varphi12}) for some range of $y's$ corresponding to $t \in [0,\frac{\pi}{p}]$. Our proof will be given through the following four steps:
\begin{itemize}
	\item Proving that $\psi'(y)$ is increasing till some point and then decreases,
	\item Then $\psi'(0)\geq 0,$
	\item Estimation of the $y'$ such that $\frac{(\cosh y'+1)^{\frac{p}{2}-1}\sinh y'}{(1+\cos\frac{\pi}{p})^{\frac{p}{2}}}-\cosh^{\frac{p}{s}-1}\frac{s y'}{2}\sinh \frac{s y'}{2},$
	\item Checking $\psi(y')\geq 0.$
\end{itemize}

\subsection{\textit{Step 1}}

By direct calculation we find
$$\psi'(y)=-\sinh y+\frac{2(1-c_p)}{2-p}\frac{h(y)}{\sinh y\sinh(sy)}\bigg(\frac{2\cosh^{\frac{p}{s}-1}\frac{s y}{2}\sinh \frac{s y}{2}}{s \sinh y}\bigg)^{\frac{2}{p-2}}.$$
where $h(y)=-p \sinh^2\frac{s y}{2}\sinh y-s \sinh y+\cosh y\sinh(s y).$ Let us consider 
$$ \Psi(y)= \frac{\cosh^{\alpha-1}\frac{s y}{2}\sinh^{\beta-1}\frac{s y}{2}h(y)}{\sinh^{\beta+2}y},\quad\text{with}\quad \alpha=\frac{2}{p-2}\big(\frac{p}{s}-1\big), \beta= \frac{2}{p-2}.$$
From
\begin{align*}
\Psi'(y)&=\frac{\cosh^{\alpha-2}\frac{s y}{2}\sinh^{\beta-2}\frac{s y}{2}}{2\sin^{\beta+3}y}\bigg[\sinh y\bigg(s(\alpha-1)h(y)\sinh^2\frac{s y}{2}\\
&+s(\beta-1)h(y)\cosh^2\frac{s y}{2}
+h'(y)\sinh(s y)\bigg)-(\beta+2)h(y)\cosh y\sinh(s y)\bigg]
\end{align*}
we infer that 
\begin{align*}
H(y):&=\sinh y\big(s(\alpha-1)\sinh^2\frac{s y}{2}h(y)+s(\beta-1)\cosh^2\frac{s y}{2}h(y)\\
&+h'(y)\sinh(s y)\big)-(\beta+2)\cosh y\sinh(s y)h(y)
\end{align*}
has the same sign as $\Psi'(y)$. Inserting
$h(y)=\cosh y\sinh(s y)+(\frac{p}{2}-s)\sinh y
\\-\frac{p}{2}\sinh y\cosh (s y)$ and
$h'(y)=(1-\frac{p s}{2})\sinh(s y)\sinh y+(s-\frac{p}{2})\cosh(s y)\cosh y\\
+(\frac{p}{2}-s)\cosh y$
in the formula for $H(y)$, after long, but straightforward calculation we get:
\begin{align*}
H(y)&=\frac{p(p+2)}{8(2-p)}\cosh(2s-2)y+\frac{(s-1)(2s-p)}{8}\cosh(s+2)y+\frac{p-4}{8}\cosh(2s y)\\
&+\frac{(2s-p)(3p+2s+2-ps)}{8(2-p)}\cosh(s-2)y
+\frac{(2s-p)(2p+2s-ps)}{4(p-2)}\cosh(sy)\\
&+\frac{8s^2+3p^2-2p^2 s-4ps-2p}{8(2-p)}\cosh(2y)+\frac{-8s^2-3p^2+2p^2s+4ps-6p+8}{8(2-p)}\\
&=\sum_{k=3}^{+\infty} a_k \frac{y^{2k}}{(2k)!},\quad \text{where}
\end{align*}
\begin{align*}
&a_k=\frac{(2s-p)(3p+2s+2-ps)}{8(2-p)}(s-2)^{2k}+\frac{p-4}{8}(2s)^{2k}+\frac{(s-1)(2s-p)}{8}(s+2)^{2k}+\\
&\frac{p(p+2)}{8(2-p)}(2s-2)^{2k}+\frac{(2s-p)(2p+2s-ps)}{4(p-2)}s^{2k}+\frac{8s^2+3p^2-2p^2 s-4ps-2p}{8(2-p)}2^{2k}.
\end{align*}

We will prove that several first non-zero coefficients are positive, while the other are negative. For $k=0,1,2$ coefficient is equal to zero. Third coefficient is equal to $-16s^2(s-1)(s-2)\bigg(s^2(8-14p)+s(15p^2+24-12p)+16-28p\bigg).$ To see that it is always positive, it is enough to prove that $s^2+\frac{15p^2-12p+24}{8-14p}s+2>0.$ But, $\frac{15p^2-12p+24}{8-14p}$ as a function of $p$ has the derivative $\frac{15(8+8p-7p^2)}{2(4-7p)^2}>0,$ therefore $s^2+\frac{15p^2-12p+24}{8-14p}s+2\geq s^2-\frac{9}{2}s+2>0$ for $s>4.$ (This holds for $1<p\leq \frac{4}{3}$!)
Note that we can rewrite $(2s)^{-2k}a_k$ as
\begin{equation}
\label{eq:coeff}
\frac{p(p+2)}{8(2-p)}\bigg(\frac{2s-2}{2s}\bigg)^{2k}+\frac{p-4}{8}
+\frac{8s^2+3p^2-2p^2 s-4ps-2p}{8(2-p)}\bigg(\frac{2}{2s}\bigg)^{2k}+\frac{2s-p}{16-8p}b_k,
\end{equation}
where
 $$b_k=(s-1)(2-p)\bigg(\frac{s+2}{2s}\bigg)^{2k}+(3p+2s+2-ps)\bigg(\frac{s-2}{2s}\bigg)^{2k}-2(2p+2s-ps)\bigg(\frac{1}{2}\bigg)^{2k}$$
for $k \geq 4.$ 
First three terms in (\ref{eq:coeff}) evidently decreases as a function of $k.$ If we prove that the same holds for $b_k,$ then from $\lim_{k\rightarrow +\infty}(2s)^{-2k}a_k=\frac{p-4}{8}<0$ we infer the desired conclusion. Let us, now, compare $b_k$ and $b_{k+1}.$ From 
\begin{align*}
&2^{2k}\big(b_k-b_{k+1}\big)=(s-1)(2-p)\bigg(\frac{s+2}{s}\bigg)^{2k}\frac{(s-2)(3s+2)}{4s^2}\\
&+(3p+2s+2-ps)\bigg(\frac{s-2}{s}\bigg)^{2k}\frac{(s+2)(3s-2)}{4s^2}-\frac{3}{2}\big(2p+2s-ps\big),
\end{align*}                             
we see that $b_k$ is decreasing if and only if
\begin{equation}
\label{eq:L}
f(p,s)\frac{(s-2)(3s+2)}{3s^2}\bigg(\frac{s+2}{s}\bigg)^{2k}+(1-f(p,s))\frac{(s+2)(3s-2)}{3s^2}\bigg(\frac{s-2}{s}\bigg)^{2k} \geq 1,
\end{equation}
where 
$$f(p,s)=\frac{(s-1)(2-p)}{2(2p+2s-ps)}.$$
Since $\frac{\partial }{\partial p}f(p,s)=\frac{-2(s-1)}{(2p+2s-ps)^2}<0,$ we have $f(p,s)\geq f(\frac{4}{3},s)=\frac{s-1}{2s+8}$ and 
\begin{align*}
&f(p,s)\bigg[\frac{(s-2)(3s+2)}{3s^2}\bigg(\frac{s+2}{s}\bigg)^{2k}-\frac{(s+2)(3s-2)}{3s^2}\bigg(\frac{s-2}{s}\bigg)^{2k}\bigg]\\
&\geq \frac{s-1}{2s+8}\bigg[\frac{(s-2)(3s+2)}{3s^2}\bigg(\frac{s+2}{s}\bigg)^{2k}-\frac{(s+2)(3s-2)}{3s^2}\bigg(\frac{s-2}{s}\bigg)^{2k}\bigg].
\end{align*}
Therefore, (\ref{eq:L}) follows from 
$$\frac{s-1}{2s+8}\frac{(s-2)(3s+2)}{3s^2}\bigg(\frac{s+2}{s}\bigg)^{2k}+\frac{s+9}{2s+8}\frac{(s+2)(3s-2)}{3s^2}\bigg(\frac{s-2}{s}\bigg)^{2k}\geq 1.$$
After a change of variable $x=\frac{1}{s}\leq \frac{1}{6},$ we arrive at inequality:
$$ \frac{P(x)}{24x+6}(1+2x)^{2k}+\frac{Q(x)}{24x+6}(1-2x)^{2k} \geq 1,$$
where $P(x)= (1-x)(1-2x)(3+2x)$ and $Q(x)=(1+9x)(1+2x)(3-2x).$ 

From $$P(x)(1+2x)^8+Q(x)(1-2x)^8-24x-6$$
$$=32x^2(1-x)\big(256x^8-1280x^7+1344x^6-1520x^5+48x^4+160x^3-120x^2+70x+3\big)\geq 0,$$ 
and $$P(x)(1+2x)^8+Q(x)(1-2x)^8-P(x)-Q(x)$$
$$=64x^2(1-x)\big(128x^8-640x^7+672x^6-760x^5+24x^4+80x^3-60x^2+35x+1\big)\geq 0,$$
using Jensen's inequality, we get:
\begin{align*}
&P(x)(1+2x)^{2k}+Q(x)(1-2x)^{2k}\\
&\geq (P(x)+Q(x))\bigg(\frac{P(x)}{P(x)+Q(x)}(1+2x)^8+\frac{Q(x)}{P(x)+Q(x)}(1-2x)^8\bigg)^{\frac{k}{4}}\\
&\geq P(x)(1+2x)^8+Q(x)(1-2x)^8\geq 24x+6.
\end{align*}

From Lemma \ref{koeficijentimonotonost} we conclude that $\Psi$ increases and then decreases and therefore, $\psi $ has positive and then eventually negative derivative, which concludes the statement of Step 1.

\subsection{\textit{Step 2}}

We easily see that $h(y)=\big(\frac{s}{3}+\frac{s^3}{6}-\frac{ps^2}{4}\big)y^3+ o(y^3)$ and, since $g(0+)=1,$ we have:
$$\lim_{y \rightarrow 0+}\frac{h(y)g(y)^{\frac{2}{p-2}}}{\sinh^2 y\sinh(sy)}=\frac{1}{3}-\frac{p s}{4}+\frac{s^2}{6}.$$
Now, $\psi'(0)\geq 0$ is reduced to
$$\frac{2(1-c_p)}{2-p}\bigg(\frac{1}{3}+\frac{s^2}{6}-\frac{p s}{4}\bigg)\geq 1.$$
Identity $1-c_p=\frac{2}{2-s}(1-p+\frac{p}{s})$ transforms it into  
$$\frac{2}{2-s}\bigg(1-p+\frac{p}{s}\bigg)\bigg(\frac{1}{3}+\frac{s^2}{6}-\frac{p s}{4}\bigg)+\frac{p}{2}-1\geq 0.$$
After multiplying by $s(\frac{s}{2}-1)$ we get the it is equivalent with
$$ (s-1)\bigg[\frac{p-1}{6}s^2+\bigg(-\frac{p^2}{4}+\frac{p}{2}-\frac{2}{3}\bigg)s+\frac{p}{3}\bigg] \geq 0.$$
From $-\frac{1}{4}p^2+\frac{1}{2}p-\frac{2}{3}=-\frac{1}{12}\big(3(p-1)^2+5\big)\geq -\frac{4}{9}$
we see that it is enough to prove $\frac{p-1}{6}s^2-\frac{4s}{9}+\frac{p}{3}\geq 0$, i.e.
$$p=\frac{\pi}{2\arccos\frac{1}{\sqrt{s}}} \geq \frac{3s^2+8s}{3s^2+6}.$$
We easily transform this inequality into 
$$\frac{\pi(4s-3)}{3s^2+8s} \leq \arctan \frac{1}{\sqrt{s-1}},\quad s\geq 4+2\sqrt{2}.$$
By convexity of arctangent, we get:
$$\arctan\frac{1}{\sqrt{s-1}} \geq  \frac{\sqrt{2}+1}{\sqrt{s-1}}\arctan(\sqrt{2}-1)=\frac{\pi}{8}\frac{\sqrt{2}+1}{\sqrt{s-1}}.     $$
It remains to prove
$$\frac{\sqrt{2}+1}{8}\geq \frac{(4s-3)\sqrt{s-1}}{3s^2+8s}.  $$
After substitution $x=\sqrt{s-1}\geq \sqrt{2}+1,$ we get
$$\frac{\sqrt{2}+1}{8}\geq \frac{x(4x^2+1)}{3x^4+14x^2+11}:=\kappa(x).$$
Since $\kappa(\sqrt{2}+1)=\frac{\sqrt{2}+1}{8}$ and $\kappa'(x)=\frac{-12x^6+47x^4+118x^2+11}{(3x^4+14x^2+11)^2},$ it is enough to prove that $p(t)=-12t^3+47t^2+118t+11$ is non-positive for $t\geq 3+2\sqrt{2}.$ From $p'(t)=-36t^2+94t+118$ 
and $p''(t)=2(-36t+47)<0,$ we have $p'(t)\leq p'(3+2\sqrt{2})=-212-244\sqrt{2}<0,$ hence $p(t)$ decreases and $p(t)\leq p(3+2\sqrt{2})=-8(3+5\sqrt{2})<0.$

\textbf{Remark}. This is the place where we see that our method works exactly for $1<p\leq \frac{4}{3}.$ In fact, this implies that $\psi(y)$ is decreasing in some neighbourhood of zero for $p>\frac{4}{3}$ and hence (\ref{eq:varphi12}) does not hold in this case!

\subsection{\textit{Step 3}} We will now estimate the values of $y$ such that (\ref{eq:st12}) hold for $t \in [0,\frac{\pi}{p}].$ More precisely, we show
$$\frac{2}{s}\bigg(\frac{2\cosh^{\frac{p}{s}-1}\frac{s y}{2}\sinh \frac{s y}{2}}{s \sinh y}\bigg)^{\frac{2}{p-2}}>\cosh y+1$$
for $y \geq \pi(p-1)$ for $1<p\leq \frac{5}{4}$ and $y \geq \frac{13(p-1)}{5}$
for $\frac{5}{4}\leq p \leq \frac{4}{3},$ or equivalently
$$\frac{\cosh^{\frac{p}{s}-1}\frac{s y}{2}\sinh \frac{s y}{2}}{ \sinh y}<\bigg(\frac{s}{2}\bigg)^{\frac{p}{2}}\big(\cosh y+1\big)^{\frac{p}{2}-1}.$$

We proceed as in the case $p\geq 2.$ By dividing by $\cosh^{\frac{p}{2}-1}y$ and reasoning as in Section 6 (in fact, the case of $p\leq 4$ and $s>4$ can be completely transferred to this case) we conclude that the appropriate $G(y)^{\frac{2}{p-2}}$ is increasing, while $1+\frac{1}{\cosh y}$ decreases. Hence, we are reduced to the case of $y=c(p-1),$ for corresponding $c.$ Using $\sinh\frac{s y}{2}<\cosh\frac{s y}{2},$ rephrasing inequality in terms of exponential functions, and dividing both sides by $e^{\frac{p y}{2}},$ we arrive at
$$\big(2\cos\frac{\pi}{2p}\big)^p\bigg(\frac{1+e^{-s y}}{2}\bigg)^{\frac{p}{s}}<(1+e^{-y})^{p-1}(1-e^{-y}).$$
We will prove it in the Lemma \ref{lokalizacija}. However, we first concentrate on the following auxiliary lemma.  
\begin{lemma}
	\label{pomocnalema}
	For $y=\frac{13(p-1)}{5}$ and $s=\sec^2\frac{\pi}{2p},$ the function
	$$\bigg(\frac{1+e^{-s y}}{2}\bigg)^{\frac{p}{s}}$$ is monotone decreasing on $p$
	and 
	$$\bigg(\frac{1+e^{-s y}}{2}\bigg)^{\frac{p}{s}}\leq
	\begin{cases} \frac{37}{40},\quad \frac{5}{4}\leq p\leq \frac{13}{10},\\
	\frac{9}{10}, \quad \frac{13}{10}\leq p\leq \frac{4}{3}.
	\end{cases}
	$$
\end{lemma}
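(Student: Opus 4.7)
The plan is to introduce $u(p) := sy = \frac{13(p-1)}{5}\sec^2\frac{\pi}{2p}$ and $v(p) := \frac{p}{s} = p\cos^2\frac{\pi}{2p}$, so that the quantity to estimate becomes $F(p) = 2^{-v(p)}\bigl(1+e^{-u(p)}\bigr)^{v(p)}$. Once monotone decrease of $F$ is established, the two numerical bounds follow simply by evaluating $F$ at the left endpoints $p=\frac{5}{4}$ and $p=\frac{13}{10}$: using $\cos^2\frac{2\pi}{5}=\frac{3-\sqrt 5}{8}$ and $\cos^2\frac{3\pi}{8}=\frac{2-\sqrt 2}{4}$, together with the crude tail estimate $e^{-x}<10^{-2}$ for $x>5$, each evaluation becomes a numerical check with ample slack to beat $\frac{37}{40}$ and $\frac{9}{10}$ respectively.

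For the monotonicity I first would verify the signs of $v'$ and $u'$. Differentiation gives $v'(p)=\cos^2\frac{\pi}{2p}+\frac{\pi\sin(\pi/p)}{2p}>0$ as a sum of two positive terms. A direct computation yields
\[
u'(p)=\frac{13}{5}\sec^2\frac{\pi}{2p}\left[1-\frac{\pi(p-1)}{p^2}\tan\frac{\pi}{2p}\right],
\]
so $u'<0$ reduces to showing $\Psi(p):=\frac{\pi(p-1)}{p^2}\tan\frac{\pi}{2p}>1$ on $[\tfrac{5}{4},\tfrac{4}{3}]$. At the endpoints one has $\Psi(\tfrac{5}{4})=\tfrac{4\pi}{25}\tan\tfrac{2\pi}{5}\approx 1.55$ and $\Psi(\tfrac{4}{3})=\tfrac{3\pi}{16}(1+\sqrt 2)\approx 1.42$; a short elementary argument (e.g.\ using $\tan\frac{\pi}{2p}\geq 1+\sqrt 2$ and $(p-1)/p^{2}\geq 3/16$ on this interval) shows $\Psi>1$ throughout.

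With $v'>0$ and $u'<0$ in hand,
\[
(\log F)'(p) = v'(p)\log\frac{1+e^{-u(p)}}{2} - v(p)\,\frac{e^{-u(p)}u'(p)}{1+e^{-u(p)}},
\]
the first term being strictly negative and the second strictly positive. Applying the elementary estimates $\log\frac{2}{1+e^{-u}}\geq \log 2-e^{-u}$ and $\frac{e^{-u}}{1+e^{-u}}\leq e^{-u}$, the desired inequality $(\log F)'\leq 0$ reduces to
\[
e^{-u(p)} \leq \frac{v'(p)\log 2}{v'(p)+v(p)\lvert u'(p)\rvert}.
\]
Since $u$ is decreasing, $u(p)\geq u(\tfrac{4}{3})=\tfrac{26}{15}(2+\sqrt 2)>5.9$, so the left side is smaller than $3\cdot 10^{-3}$, while the right side is of order $10^{-1}$ because $v$, $v'$, and $|u'|$ are all $O(1)$ on the interval. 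The inequality therefore holds with a wide safety margin.

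The hard part is purely organizational: every estimate carries enormous slack, so the real work lies in expressing the bounds with explicit algebraic constants rather than decimal approximations and in verifying the uniform lower bound $\Psi(p)>1$ on the whole interval and not merely at the endpoints.
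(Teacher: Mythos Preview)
Your approach is sound and reaches the same conclusion, but by a different route than the paper. Where you compute $(\log F)'$ directly and exploit the smallness of $e^{-u}$ to force it negative, the paper instead writes the quantity as $\kappa(p)^{p(p-1)}$ with
\[
\kappa(p)=\Bigl(\tfrac{1+e^{-13s(p-1)/5}}{2}\Bigr)^{1/(s(p-1))},
\]
recognizes $\kappa(p)$ as the power mean of order $s(p-1)$ of $1$ and $e^{-13/5}$, and deduces $\kappa'<0$ from the monotonicity of power means once $s(p-1)$ is shown to be decreasing in $p$ --- which is exactly your condition $\Psi(p)>1$. The paper's argument then needs no numerical safety-margin estimate for the monotonicity step: both summands in the logarithmic derivative of $\kappa(p)^{p(p-1)}$ are manifestly negative. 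Your route is more hands-on but perfectly viable, since $e^{-u}$ really is tiny on the whole interval.

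Two slips to fix. First, the bound $(p-1)/p^2\geq 3/16$ is false on $[\tfrac54,\tfrac43]$: the function $(p-1)/p^2$ is increasing there with minimum $4/25$ at $p=\tfrac54$. Using $4/25$ instead still gives $\Psi(p)\geq \tfrac{4\pi}{25}(1+\sqrt2)>1.2$, so the conclusion $\Psi>1$ survives. Second, $\cos^2\tfrac{3\pi}{8}$ corresponds to $p=\tfrac43$, not to the left endpoint $p=\tfrac{13}{10}$ of the second interval; that endpoint involves $\cos^2\tfrac{5\pi}{13}$, which has no tidy closed form. You will need an explicit bound there (the paper uses $s(\tfrac{13}{10})<8$, i.e.\ $\cos^2\tfrac{5\pi}{13}>\tfrac18$), and the margin against $\tfrac{9}{10}$ is genuinely thin --- $F(\tfrac{13}{10})\approx 0.893$ --- so the tail estimate must be applied with a little care. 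Finally, $|u'|$ is not really $O(1)$ (it is about $15$ near $p=\tfrac54$); what stays bounded is $v|u'|$, which is all your inequality actually needs.
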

\begin{proof}
	From
	$$\bigg(\frac{1+e^{-s y}}{2}\bigg)^{\frac{p}{s}}=\bigg(\frac{1+e^{-\frac{13(p-1)s}{5}}}{2}\bigg)^{\frac{1}{s(p-1)}\cdot p(p-1)}=\kappa(p)^{p(p-1)}$$
	and $\frac{d }{d p}\kappa(p)^{p(p-1)}=\kappa(p)^{p(p-1)}\bigg((2p-1)\log \kappa(p)+\frac{\kappa'(p)}{\kappa(p)}p(p-1)\bigg)$
	we conclude the result if we prove that $\kappa'(p)<0.$ But, $\kappa(p)$ is $s(p-1)-$ mean of $1$ and $e^{-\frac{13}{5}}$ and $\kappa'(p)<0$ follows from the fact that $\frac{1}{s(p-1)}$ is monotone increasing:
	$$\frac{d}{d p}\frac{\cos^2\frac{\pi}{2p}}{p-1}=\frac{\cos^2\frac{\pi}{2p}\big(\frac{\pi}{p^2}(p-1)\tan\frac{\pi}{2p}-1\big)}{(p-1)^2}>0$$
	Hence, we have 
	$$\bigg(\frac{1+e^{-s y}}{2}\bigg)^{\frac{p}{s}}\leq \bigg(\frac{1+e^{-\frac{13}{20(\sqrt{5}-1)^2}}}{2}\bigg)^{\frac{5(\sqrt{5}-1)^2}{64}}<\frac{37}{40}$$
	for $\frac{5}{4}\leq p\leq\frac{4}{3}$ and 
	\begin{align*}
	&\bigg(\frac{1+e^{-s y}}{2}\bigg)^{\frac{p}{s}}\leq  \bigg(\frac{1+e^{-\frac{39}{50}\tilde{s} }}{2}\bigg)^{\frac{13}{10\tilde{s}}}\leq\\
	& \bigg(\frac{1+e^{-\frac{156}{25}}}{2}\bigg)^{\frac{13}{80}}<\bigg(\frac{1+e^{-6}}{2}\bigg)^{\frac{13}{80}}<\frac{9}{10},
	\end{align*}
	for $\frac{13}{10}\leq p\leq \frac{4}{3}$, since $\tilde{s}=\frac{1}{\cos^2\frac{5\pi}{13}}<8.$
\end{proof}
\begin{lemma}
	\label{lokalizacija}
	There holds the inequality 
	$$\big(2\cos\frac{\pi}{2p}\big)^p\bigg(\frac{1+e^{-s y}}{2}\bigg)^{\frac{p}{s}}<(1+e^{-y})^{p-1}(1-e^{-y})$$
	with $y=\pi(p-1)$ for $1<p\leq \frac{5}{4}$ and $y=\frac{13(p-1)}{5}$ for $\frac{5}{4}<p\leq \frac{4}{3}.$
\end{lemma}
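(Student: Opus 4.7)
The plan is to split the proof according to the two subranges indicated in the statement, since the cutoff $p=\tfrac{5}{4}$ reflects both a change of choice of $y$ and a change of strategy.

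For the range $\tfrac{5}{4} < p \leq \tfrac{4}{3}$ with $y = \tfrac{13(p-1)}{5}$, I would invoke Lemma \ref{pomocnalema} to replace the left-hand factor $\bigl(\tfrac{1+e^{-sy}}{2}\bigr)^{p/s}$ by its explicit upper bound: $\tfrac{37}{40}$ on $[\tfrac{5}{4},\tfrac{13}{10}]$ and $\tfrac{9}{10}$ on $[\tfrac{13}{10},\tfrac{4}{3}]$. The remaining inequality is elementary: on each subinterval it asks that $(2\cos\tfrac{\pi}{2p})^p\cdot c < (1+e^{-y})^{p-1}(1-e^{-y})$ for the explicit constant $c$. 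After introducing $u = p-1$ and taking logarithms, this becomes a one-variable inequality on a compact interval; one exploits $\cos\tfrac{\pi}{2p} = \sin\tfrac{\pi u}{2(1+u)}$ to rewrite the left side and then verifies the inequality by monotonicity of each side in $u$ together with a check at the endpoints $p=\tfrac{5}{4},\,\tfrac{13}{10},\,\tfrac{4}{3}$.

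For the range $1 < p \leq \tfrac{5}{4}$ with $y = \pi(p-1)$, the situation is more delicate because both sides vanish as $p \to 1^+$ at the same order. Here $s = \sec^2\tfrac{\pi}{2p} \to \infty$ while $y \to 0^+$, so $sy \to \infty$ and the factor $\bigl(\tfrac{1+e^{-sy}}{2}\bigr)^{p/s}$ tends to $1$; simultaneously $2\cos\tfrac{\pi}{2p} \sim \pi(p-1)$ and $1 - e^{-\pi(p-1)} \sim \pi(p-1)$, so both sides are asymptotic to $\pi(p-1)$. I would set $u = p-1 \in (0,\tfrac{1}{4}]$, take logarithms, and use sharp two-sided estimates $\sin x \leq x - \tfrac{x^3}{6} + \tfrac{x^5}{120}$ and $1-e^{-x} \geq x - \tfrac{x^2}{2} + \tfrac{x^3}{6} - \tfrac{x^4}{24}$ (with corresponding estimates for the $p$-th power on the left and $(1+e^{-y})^{p-1}$ on the right) to produce a concrete polynomial lower bound for $\log(\text{RHS}) - \log(\text{LHS})$. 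Its positivity on $(0,\tfrac{1}{4}]$ can then be verified by elementary analysis, with the leading cancellations at orders $u$ and $u\log u$ matching on both sides and the first non-trivial term having a definite sign.

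The main obstacle is this second case: because the two sides agree to leading order as $u \to 0^+$, any crude bound (for example, simply $\sin x \leq x$) is insufficient, and one has to retain higher-order Taylor terms and carefully track cancellations involving $u\log u$ and $u^2$. Once this control is achieved, the problem reduces to checking a polynomial-type inequality on $(0,\tfrac{1}{4}]$. Case 2 is essentially a routine numerical analysis once Lemma \ref{pomocnalema} has been applied.
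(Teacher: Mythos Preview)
Your plan for $\tfrac{5}{4}<p\le\tfrac{4}{3}$ is close to the paper's: both invoke Lemma~\ref{pomocnalema} to bound $\bigl(\tfrac{1+e^{-sy}}{2}\bigr)^{p/s}$ by $\tfrac{37}{40}$ or $\tfrac{9}{10}$. The paper adds one more ingredient you omit: from the monotonicity of $\tfrac{2p}{p-1}\cos\tfrac{\pi}{2p}$ it obtains $2\cos\tfrac{\pi}{2p}\le C\,\tfrac{p-1}{p}$ with explicit $C$ on each subinterval, and then shows that the log-difference
\[
f(x)=\log K+(x+1)\log\tfrac{Cx}{x+1}-x\log(1+e^{-cx})-\log(1-e^{-cx}),\qquad x=p-1,
\]
is \emph{convex}; hence only the endpoint values need to be checked. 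Your appeal to ``monotonicity of each side in $u$'' is a different reduction that you have not justified.

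For $1<p\le\tfrac{5}{4}$ the two routes diverge substantially, and your account of the asymptotics is not right. The paper does not Taylor-expand at all. It uses the trivial bound $\bigl(\tfrac{1+e^{-sy}}{2}\bigr)^{p/s}\le1$ together with the sharp estimate $2\cos\tfrac{\pi}{2p}\le\pi\,\tfrac{p-1}{p}$, where the constant $\pi$ is exactly $\lim_{p\to1^+}\tfrac{2p}{p-1}\cos\tfrac{\pi}{2p}$. This choice is the whole point: it forces the reduced inequality to become an \emph{equality} in the limit $p\to1^+$, so the same convexity-plus-endpoints argument handles this subrange uniformly with the other two, with no asymptotic bookkeeping whatsoever. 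Your Taylor route is workable in principle, but the $u\log u$ terms do \emph{not} match as you claim: with $u=p-1$ one has
\[
\log(\text{LHS})=\log(\pi u)+u\log(\pi u)-u+O(u^2),\qquad
\log(\text{RHS})=\log(\pi u)+u\bigl(\log 2-\tfrac{\pi}{2}\bigr)+O(u^2),
\]
so $\log(\text{RHS})-\log(\text{LHS})=-u\log u+O(u)$, not $O(u)$ after a cancellation. The sign of $-u\log u$ is favorable, so the inequality does hold near $p=1$, but the mechanism is not the one you describe, and turning this into a proof valid on all of $(0,\tfrac14]$ would require redoing the expansion with explicit remainder control --- considerably more work than the paper's one-line estimate $2\cos\tfrac{\pi}{2p}\le\pi\,\tfrac{p-1}{p}$ followed by convexity.
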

\begin{proof} We will first estimate the left-hand side of the desired inequality. The function $\frac{2p}{p-1}\cos\frac{\pi}{2p}$ is monotone decreasing on $p$ and we have
	$$2\cos\frac{\pi}{2p} \leq C \frac{p-1}{p},$$
	where
	$$	C=
	\begin{cases} \pi,\quad 1<p\leq \frac{5}{4},\\
	10\cos\frac{2\pi}{5}<\frac{31}{10}, \quad \frac{5}{4}\leq p\leq \frac{13}{10},\\
	\frac{26}{3}\cos\frac{5\pi}{13}<\frac{77}{25}, \quad \frac{13}{10}<p\leq \frac{4}{3}.
	\end{cases}
	$$	
	Using this estimate and Lemma \ref{pomocnalema} with the appropriate value of $y,$ we arrive at:
	$$ K\bigg(\frac{C(p-1)}{p}\bigg)^p-\big(1+e^{-c(p-1)}\big)^{p-1}\big(1-e^{-c(p-1)}\big)\leq 0$$
	equivalent with 
	$$f(x):=\log K+(x+1)\log\frac{C x}{x+1}-x\log(1+e^{-cx})-\log(1-e^{-cx})\leq 0.$$	
	We will prove that for the above mentioned values of the constants $K, C, c$ and a variable $x=p-1,$ it is a convex function of $x$ for $x\leq \frac{1}{2}.$ To prove
	$$f''(x)=-\frac{1}{x^2(x+1)}+\frac{c^2e^{cx}}{(e^{cx}-1)^2}+\frac{2c+ce^{cx}(2-cx)}{(e^{cx}+1)^2}>0$$
	note that the last term is evidently positive, while the positivity of the sum of the first two terms follows from:
	$$\frac{c^2x^2(x+1)}{4}\geq \bigg(\frac{1-e^{cx}}{2e^{\frac{cx}{2}}}\bigg)^2=\sinh^2\frac{cx}{2}.$$
	Now, from the elementary inequality $\sinh t\leq t\cosh t,$ we infer 
	$\frac{4}{c^2x^2}\sinh^2\frac{cx}{2} \leq \cosh^2\frac{cx}{2};$ hence, it is enough to prove
	$\cosh(cx)-2x-1\leq 0.$
	From its second derivative we easily find that this function is convex and it is enough to prove it at the ends of intervals. For $x \in [0,\frac{1}{4}]$ this follows from $\cosh\frac{\pi}{4}<\frac{3}{2},$ while on $[\frac{1}{4},\frac{1}{3}]$ we infer the conclusion from $\cosh\frac{13}{20}<\frac{3}{2}$ and $\cosh\frac{13}{15}<\frac{5}{3}$.
	
	We conclude that our function has a part-by-part convex majorant (for $p$ on the intervals $[1,\frac{5}{4}],$ $[\frac{5}{4},\frac{13}{10}]$ and $[\frac{13}{10},\frac{4}{3}]$) and since it has the maximum in one of the endpoints, it is enough to check that the values of the majorant at the endpoints are negative. Indeed, we have
	\begin{align*}
	&\lim_{p \rightarrow 1+}\bigg(\frac{\pi(p-1)}{p}\bigg)^p-\big(1+e^{-\pi(p-1)}\big)^{p-1}\big(1-e^{-\pi(p-1)}\big)=0,\\
	&\bigg(\frac{\pi}{5}\bigg)^{\frac{5}{4}}-\big(1+e^{-\frac{\pi}{4}}\big)^{\frac{1}{4}}\big(1+e^{-\frac{\pi}{4}}\big)<0,\\ &\frac{37}{40}\bigg(\frac{31}{50}\bigg)^{\frac{5}{4}}-\big(1+e^{-\frac{13}{20}}\big)^{\frac{1}{4}}\big(1+e^{-\frac{13}{20}}\big)<0,\\
	&\frac{37}{40}\bigg(\frac{93}{130}\bigg)^{\frac{13}{10}}-\big(1+e^{-\frac{39}{50}}\big)^{\frac{3}{10}}\big(1+e^{-\frac{39}{50}}\big)<0,\\ &\frac{9}{10}\bigg(\frac{231}{325}\bigg)^{\frac{13}{10}}-\big(1+e^{-\frac{39}{50}}\big)^{\frac{3}{10}}\big(1+e^{-\frac{39}{50}}\big)<0,\\
	&\frac{9}{10}\bigg(\frac{77}{100}\bigg)^{\frac{4}{3}}-\big(1+e^{-\frac{13}{15}}\big)^{\frac{1}{3}}\big(1+e^{-\frac{13}{15}}\big)<0,
	\end{align*}
 and therefore, the desired inequality follows. 	
\end{proof}	
In the following lemma we show that $y_p(t)$ is well defined and monotone increasing.
\begin{lemma}
\label{dobradef}
 The function $\varphi(y)=\frac{2}{s}\bigg[ \frac{2\cosh^{\frac{p}{s}-1}\frac{s y}{2}\sinh \frac{s y}{2}}{s\sinh y}\bigg]^{\frac{2}{p-2}}-\cosh y$ is monotone increasing on $(0,+\infty)$ and $y'\leq c(p-1)$ and $y_p(t)$ is well defined, monotone decreasing and differentiable.
\end{lemma}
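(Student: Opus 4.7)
The lemma asserts three facts: (a) $\varphi$ is monotone increasing on $(0,+\infty)$; (b) the estimate $y'\leq c(p-1)$; and (c) the function $y_p(t)$ is well-defined, monotone decreasing, and differentiable on $[0,\pi/p]$. I would establish (a) first, since (b) and (c) follow routinely from it. A Taylor expansion at $y=0$ (using $\sinh(sy/2)/\sinh y\to s/2$ and $\cosh^{p/s-1}(sy/2)\to 1$) gives $\varphi(0^+)=2/s-1=\cos(\pi/p)$, and an asymptotic analysis of the bracket in the definition of $\varphi$ yields $\varphi(y)\to+\infty$ as $y\to+\infty$. Once (a) is in hand, the equation $\varphi(y)=\cos t$ has a unique solution $y_p(t)=\varphi^{-1}(\cos t)$ for each $t\in[0,\pi/p]$; it is monotone decreasing because $\cos t$ is decreasing and $\varphi^{-1}$ is increasing, and differentiable by the Implicit Function Theorem applied to $\varphi(y)-\cos t=0$ with $\varphi'(y_p(t))>0$. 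The bound $y'\leq c(p-1)$ is then immediate from Lemma \ref{lokalizacija}, which provides $\varphi(c(p-1))>1$, combined with monotonicity.

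For the core step (a), I would start from the identity
\[
\varphi(y)+\cosh y=(2/s)^{p/(p-2)}u(y)^{2/(p-2)},\qquad u(y)=\frac{\cosh^{p/s-1}(sy/2)\sinh(sy/2)}{\sinh y}.
\]
Logarithmic differentiation, together with the computation $u'(y)/u(y)=-h(y)/(\sinh y\,\sinh(sy))$ where $h$ is exactly the hyperbolic polynomial from Section 8, gives
\[
\varphi'(y)=-\sinh y+\frac{2\,h(y)\,(\varphi(y)+\cosh y)}{(2-p)\,\sinh y\,\sinh(sy)}.
\]
Multiplying through by $\sinh y\,\sinh(sy)>0$, the condition $\varphi'(y)\geq 0$ reduces to the explicit inequality
\[
2\,h(y)\,(\varphi(y)+\cosh y)\geq(2-p)\sinh^2 y\,\sinh(sy).
\]

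I would establish this in two stages. First, $h(y)\geq 0$ on $(0,+\infty)$ is a byproduct of Step 1 of Section 8: the function $\Psi(y)$ there factors as $h(y)$ times a strictly positive quantity, and Step 1 shows $\Psi$ is unimodal (increasing then decreasing); a direct check yields $\Psi(0^+)>0$ (using the leading Taylor coefficient $h(y)\sim\frac{s(4+2s^2-3ps)}{12}y^3$, positive because $4+2s^2-3ps\geq 2(s-1)(s-2)\geq 0$ for $s\geq 2$) and a strictly positive finite limit at infinity, so unimodality forces $h>0$ throughout. Second, granted $h>0$, the remaining pointwise inequality must be verified. My plan is to mimic the split argument of Lemma \ref{monot} in Section 6: after substituting $\varphi+\cosh y=(2/s)^{p/(p-2)}u^{2/(p-2)}$, reduce to a pure hyperbolic identity whose core is $(s-1)\sinh y\leq\sinh((s-1)y)$, which holds throughout our range since $s\geq 4+2\sqrt{2}>2$ for $p\leq 4/3$. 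The main obstacle is the sign change in the exponent $2/(p-2)$ compared with Section 6: there one bounds $f^{2/(p-2)}\leq\cosh y$ to conclude, but in the present regime the analogous bound runs the wrong way, and the estimate must instead propagate through a log-convexity or power-mean inequality. If the direct manipulation proves intractable, the fallback is the coefficient-sign analysis already developed in Step 1, invoking Lemma \ref{koeficijentimonotonost} for the difference of the two sides after a suitable normalization to kill the non-polynomial $u^{2/(p-2)}$ factor.
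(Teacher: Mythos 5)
Your setup is correct: the logarithmic-differentiation identity $u'/u=-h(y)/(\sinh y\,\sinh(sy))$ and the resulting formula
\[
\varphi'(y)=-\sinh y+\frac{2\,h(y)\,(\varphi(y)+\cosh y)}{(2-p)\sinh y\,\sinh(sy)}
\]
are both right, as are the endpoint values $\varphi(0^+)=\cos(\pi/p)$ and the routine deductions of (b) and (c) from (a). But the core claim (a) — that $\varphi'\geq 0$ — is not actually proved. Your ``Stage 2'' is a plan, not an argument: you yourself note that the Section 6 trick $f^{2/(p-2)}\leq\cosh y$ runs in the wrong direction here because $2/(p-2)<0$, and the fallback (``a suitable normalization to kill the non-polynomial $u^{2/(p-2)}$ factor'') is left unidentified. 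Moreover, Stage 1 ($h>0$ on $(0,\infty)$) is a detour that is neither needed nor sufficient on its own: knowing both factors in your target inequality are positive does not establish the inequality.

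The missing observation — which is precisely what the paper uses — is that the ``suitable normalization'' has already been built in Step 1: substituting $\varphi+\cosh y=(2/s)^{p/(p-2)}u^{2/(p-2)}$ and collecting the powers of $\cosh(sy/2)$, $\sinh(sy/2)$, $\sinh y$ gives exactly $\varphi'(y)/\sinh y=-1+c\,\Psi(y)$ with $c>0$ a constant and $\Psi$ the function defined in Step 1. Step 1 shows $\Psi$ is unimodal (increasing then decreasing), so $\varphi'/\sinh y$ is unimodal, and its non-negativity reduces to checking the two limits. At $y\to 0^+$ one uses $2/s\geq 1-c_p$ to compare with $\psi'/\sinh y$, whose non-negativity at $0^+$ is Step 2; at $y\to+\infty$ the limit computes to $-1+(s\cdot 2^{2/s-2})^{p/(2-p)}$, which is $\geq 0$ because $2^{x-1}\geq x$ for $x=2/s\in(0,1)$. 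Without this affine link to $\Psi$ and the endpoint check, your monotonicity claim, and with it the whole lemma, remains open.
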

\begin{proof} Since $\varphi(0)=\cos\frac{\pi}{p}$ and $\varphi(y)>1$ for $y\geq c(p-1)$ we easily conclude the existence of an $y$ such that $\varphi(y)=1.$ Finding the derivative of $\varphi,$ we have
$$\frac{\varphi'(y)}{\sinh y}=-1+\frac{4}{s(2-p)}\bigg(\frac{2}{s}\bigg)^{\frac{2}{p-2}}\Psi(y).$$	
By Step 1, $\Psi(y)$ is increasing and then decreasing, so it is enough to consider limits od $\varphi'(y)$ as $y$ tends to zero and infinity. From $\frac{2}{s}\geq 1-c_p$ we see that $\frac{\varphi'(y)}{\sinh y}\geq \frac{\psi'(y)}{\sinh y}$ and since its limit at zero is, by Step 2, non-negative, it remains to check the behaviour at $+\infty.$ Using $\sinh (ay)\sim \cosh(ay)\sim \frac{e^{a y}}{2},$ $y\rightarrow +\infty,$ we find that 
$$ \lim_{y \rightarrow +\infty}\frac{\varphi'(y)}{\sinh y}=-1+\big(s\cdot 2^{\frac{2}{s}-2}\big)^{\frac{p}{2-p}}.$$
Last expression is $\geq 0,$ since $2^{x-1}\geq x,$ for $x=\frac{2}{s} \in (0,1).$  Hence, $\varphi(y)$ is increasing and $\varphi(y)=\cos t$ has exactly one solution for $t \in [0,\frac{\pi}{p}]$. Implicit function theorem now implies the desired conclusions for $y_p(t).$  
\end{proof}

\subsection{\textit{Step 4}} We concluded that $\psi(y)$ is increasing and then decreasing and $\psi(0)\geq 0,$ for $1<p\leq \frac{4}{3}.$Therefore, it remains to prove (\ref{eq:varphi12}) for $y=c(p-1).$ 

Estimating $\psi(y)$ using Lemma 9, we get:
$$c_p-\cosh y+ (1-c_p)g(y)^{\frac{2}{p-2}}\geq c_p-\cosh y+ \frac{s(1-c_p)}{2}(\cosh y+1).$$
Hence, it is enough to prove
$$\frac{c_p+\frac{s}{2}(1-c_p)}{1-\frac{s}{2}(1-c_p)}\geq \cosh y.$$
We easily see that 
$$  \frac{c_p+\frac{s}{2}(1-c_p)}{1-\frac{s}{2}(1-c_p)}=\frac{p}{p-2}\cos\frac{\pi}{p}$$
and we are reduced to:
\begin{lemma}
	For $1<p\leq \frac{5}{4},$ there holds the inequality
	$$\frac{p}{p-2}\cos\frac{\pi}{p}\geq \cosh(\pi(p-1)), $$
	while for $\frac{5}{4}\leq p\leq \frac{4}{3}$ we have
	$$\frac{p}{p-2}\cos\frac{\pi}{p}\geq \cosh\bigg(\frac{13(p-1)}{5}\bigg). $$
\end{lemma}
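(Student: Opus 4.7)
My plan is to substitute $u = p - 1$ and use the identities $\cos(\pi/(1+u)) = -\cos(\pi u/(1+u))$ together with $p/(p-2) = -(1+u)/(1-u)$ to rewrite both inequalities in the common form $F(u) \geq \cosh(cu)$, where
\[
F(u) := \frac{1+u}{1-u}\cos\frac{\pi u}{1+u},
\]
with $c = \pi$ on $u \in [0, 1/4]$ for (a) and $c = 13/5$ on $u \in [1/4, 1/3]$ for (b). Since $F(0) = 1 = \cosh 0$ and a direct computation gives $F'(0) = 2 > 0 = (\cosh(cu))'(0)$, the inequality is strict in a right-neighborhood of $u = 0$.

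I would then verify the right endpoints directly: for (a), $F(1/4) - \cosh(\pi/4) = \tfrac{5}{3}\cos\tfrac{\pi}{5} - \cosh\tfrac{\pi}{4} > 0$ (numerical margin $\approx 0.024$); for (b), $F(1/3) - \cosh(13/15) = \sqrt{2} - \cosh\tfrac{13}{15} > 0$ (margin $\approx 0.008$). The splitting of the range $1 < p \leq 4/3$ at $p = 5/4$ and the switch to the smaller constant $c = 13/5 < \pi$ on the upper sub-interval are forced by the fact that $\cosh(\pi/3) > \sqrt{2}$, so $c = \pi$ would fail on $[1/4, 1/3]$; the junction value $u = 1/4$ for (b) is already covered by (a), since $\cosh(13/20) < \cosh(\pi/4)$.

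For the interior, set $\Delta(u) := F(u) - \cosh(cu)$. My main plan is to show $\Delta$ is concave on the relevant interval, i.e., $\Delta''(u) \leq 0$. At the origin $\Delta''(0) = 4 - \pi^2 - c^2 < 0$ for both values of $c$, and the global check amounts to differentiating $\tfrac{1+u}{1-u}\cos\tfrac{\pi u}{1+u}$ twice and comparing with $c^2\cosh(cu) \geq c^2$. Once concavity is established, $\Delta(0) = 0$ and $\Delta(u_0) > 0$ at the right endpoint $u_0$ yield $\Delta(u) \geq (u/u_0)\Delta(u_0) \geq 0$ by the chord bound (the graph of a concave function lies above its chords).

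The hard part will be establishing concavity of $\Delta$ rigorously, because the expression for $F''(u)$ mixes rational and trigonometric factors through $\cos\tfrac{\pi u}{1+u}$ and $\sin\tfrac{\pi u}{1+u}$, and the endpoint margins (on the order of $10^{-2}$) leave little room for loose estimates. A robust fallback is to subdivide each interval and apply one of: (i) a truncated Taylor expansion $F(u) - \cosh(\pi u) = 2u + (2-\pi^2)u^2 + 2u^3 + O(u^4)$ with controlled remainder, using for instance $2u - \pi^2 u^2$ as a lower bound for small $u$ (note $u \leq 1/4 < 2/\pi^2$); (ii) local monotonicity of $\Delta$ on pieces where $\Delta'$ has a fixed sign; or (iii) direct numerical verification at finitely many points together with tight Lipschitz bounds on $\Delta$. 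The tightness of the inequalities at $p = 5/4$ and $p = 4/3$ is precisely what dictates the specific constants $\pi$ and $13/5$, and the argument would collapse if either constant were enlarged.
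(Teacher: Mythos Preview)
Your approach is genuinely different from the paper's. The paper avoids second derivatives altogether: for $1<p\le \tfrac{5}{4}$ it uses only the quadratic bounds $\cos t \ge 1 - t^2/2$ (applied to $t=\pi-\pi/p$) and $\cosh t \le 1 + \tfrac{27}{50}t^2$ on $[0,\tfrac{4}{5}]$, which reduce the claim to the single-variable inequality $\tfrac{2}{\pi^2} \ge (p-1)\bigl(\tfrac{27}{50}(2-p)+\tfrac{1}{2p}\bigr)$, dispatched by one derivative and an endpoint check. For $\tfrac{5}{4}\le p\le \tfrac{4}{3}$ the paper first shows $\tfrac{p}{p-2}\cos\tfrac{\pi}{p}$ is increasing in $p$ (via the substitution $x=p/(p-2)$), then splits at $p=\tfrac{13}{10}$: the lower piece follows from the endpoint value $\tfrac{5}{3}\cos\tfrac{\pi}{5}$ exceeding $\cosh\tfrac{4}{5}$, and the upper piece from showing the difference $\sigma(p)$ is decreasing with $\sigma(\tfrac{4}{3})=\sqrt{2}-\cosh\tfrac{13}{15}>0$.

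Your concavity-plus-chord strategy is structurally cleaner, and numerics suggest that $F''(u)$ actually stays negative on $[0,\tfrac{1}{3}]$ (so $\Delta''\le 0$ would follow a fortiori). But you have correctly identified, and not closed, the gap: concavity of $\Delta$ is checked only at $u=0$, and the mixed rational--trigonometric form of $F''$ makes a clean analytic bound nontrivial. Your fallbacks (Taylor with controlled remainder, subdivision with Lipschitz bounds) would work in principle but are not executed, and in practice they amount to the same kind of piecewise elementary verification the paper already performs more directly via quadratic surrogates for $\cos$ and $\cosh$. As it stands, the proposal is a reasonable outline whose central step remains unproven, whereas the paper's route, though less elegant, is complete and requires only first-derivative and endpoint arguments.
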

\begin{proof}
	We start from well-known inequality $\cos t \geq 1-\frac{1}{2}t^2$ thus obtaining:
	$$\cos\frac{\pi}{p}=-\cos(\pi-\frac{\pi}{p})\leq -1+\frac{\pi^2(p-1)^2}{2p^2},$$
	and $\cosh \pi(p-1) \leq 1+c'\pi^2(p-1)^2,$ (by $\cosh t\leq 1+c't^2,$ with $c'=\frac{27}{50}$ and $t \in [0,\frac{4}{5}]$);
	therefore, it is enough to prove:
	$$1+c'\pi^2(p-1)^2\leq \frac{p}{p-2}\bigg(\frac{\pi^2(p-1)^2}{2p^2}-1\bigg) $$
	or, equivalently:
	$$\frac{2}{\pi^2}\geq(p-1)\big(c'(2-p)+\frac{1}{2p}\big):=\lambda(p).$$
	Since $\lambda'(p)=\frac{(5-3p)(18p^2+3p+5)}{50p^2}$, from $\lambda(\frac{5}{4})=\frac{161}{800}<\frac{2}{\pi^2}$ our inequality follows. 
	
	We used here $\mu(t)=\cosh t-1-c't^2\leq 0,$ with $c'=\frac{27}{50}$ and $t \in [0,\frac{4}{5}].$ This inequality follows from the following: $\mu'(t)=\sinh t-2c't$ and $\mu''(t)=\cosh t-2c',$ and $\mu'(t)$ decreases and then increases, hence $\mu'(\frac{4}{5})<0$ implies that $\mu'(t)>0;$ $\mu(t)$ decreases and then increases and finally $\mu(\frac{4}{5})<0$ concludes the proof. 
	
	Changing $x=\frac{p}{p-2}$ we have $\frac{p}{p-2}\cos\frac{\pi}{p}=x\sin\frac{\pi}{2x}$ and since $\frac{d}{d x}(x\sin\frac{\pi}{2x})=\\
	\cos\frac{\pi}{2x}\big(\tan\frac{\pi}{2x}-\frac{\pi}{2x}\big)>0,$ we infer that $\frac{p}{p-2}\cos\frac{\pi}{p}$ increases in $p.$ Therefore, for $\frac{5}{4}\leq p\leq \frac{13}{10}$ we have:
	$$\frac{p}{p-2}\cos\frac{\pi}{p}-\cosh \frac{13(p-1)}{5}\geq \frac{5}{3}\cos\frac{\pi}{5}-\cosh\frac{39}{50}>\frac{5(1+\sqrt{5})}{12}-\cosh\frac{4}{5}>0.$$
	Considering the function $\sigma(p)=\frac{p}{p-2}\cos\frac{\pi}{p}-\cosh \frac{13(p-1)}{5}$
	we find:
	\begin{align*}\sigma'(p)&=-\frac{2\cos\frac{\pi}{p}}{(2-p)^2}-\frac{\pi\sin\frac{\pi}{p}}{p(2-p)}-\frac{13}{3}\sinh\frac{13(p-1)}{5}\\
	&<\frac{2\cos\frac{3\pi}{13}}{(\frac{2}{3})^2}-\frac{\frac{10\pi}{13}\sin\frac{10\pi}{13}}{2-\frac{13}{10}}-\frac{13}{5}\sinh\frac{39}{50}=\frac{9\cos\frac{3\pi}{13}}{2}-\frac{100\pi\sin\frac{3\pi}{13}}{91}- \frac{13}{5}\sinh\frac{39}{50}\\
	&<\frac{27}{8}-\frac{942}{455}-\frac{13\cdot 39}{250}<0
	\end{align*}
	which gives
	$$\sigma(p)\geq \sigma(\frac{4}{3})=\sqrt{2}-\cosh\frac{13}{15}>0.$$
\end{proof}
\section{\textbf{A remark on an isoperimetric inequality for harmonic functions}}

In this section we will make some remarks on an isoperimetric inequality for harmonic functions in the unit disk. Namely, in \cite{KALAJMESTROVIC} it is proved that there is a constant $K_p$ such that
\begin{equation}
\label{isoperimetric}
\int_{\mathbb{D}} |f(z)|^{2p} d A(z) \leq K_p \bigg(\frac{1}{2\pi}\int_{0}^{2\pi}|f(e^{\imath t})|^p dt\bigg)^2
\end{equation}

for every function $f$ harmonic in the unit disk $\mathbb{D}$. Here, $f(e^{\imath t})$ denotes the value of radial limit of a function $f.$

We will prove this inequality with constant $C_p$ better than that in \cite{KALAJ.TAMS}, following the approach from \cite{MELENTIJEVICBOZIN}, where the problem earlier considered in \cite{KAYUMOVPONNUSAMYKALIRAJ} is completely solved. In \cite{HANGWANGYAN}, the authors considered similar problem in half-space setting using variational approach. 

By the basics of harmonic function theory, see \cite{PAVLOVIC.BOOK}, it can be easily seen that (\ref{isoperimetric}) is equivalent to 
$$\int_{\mathbb{D}} |P_zf(e^{\imath t})|^{2p} d A(z) \leq K_p \bigg(\frac{1}{2\pi}\int_{0}^{2\pi}|f(e^{\imath t})|^p dt\bigg)^2,$$
where $f \in L^p(\mathbb{T})$ and $P_zf(e^{\imath t})=\frac{1}{2\pi}\int_{0}^{2\pi}\frac{1-r^2}{1-2r\cos(\theta-t)+r^2}f(e^{\imath t}) dt$, $z=re^{\imath \theta}.$

Since the kernel of the integral operator $P_zf$ is positive, we infer that the constant $K_p$ has the same value for real and complex valued functions $f.$ In next lemma we give an easy observation on $K_p.$ 
\begin{lemma}
	\label{izoperimetrija}
	The best constant in the inequality (\ref{isoperimetric})
	$$K_p=C_p^p$$
	decreases as a function on $p.$
\end{lemma}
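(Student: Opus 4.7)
The plan is to show the monotonicity $K_q\le K_p$ for $1<p\le q$, which is exactly the assertion that $K_p=C_p^p$ decreases in $p$; the argument is a single application of Jensen's inequality to the Poisson kernel, reducing the $L^q$-version of (\ref{isoperimetric}) to the already established $L^p$-version. Recall that every harmonic $h\in h^q(\mathbb{D})$ is the Poisson integral $P[\phi]$ of its boundary values $\phi\in L^q(\mathbb{T})$, and from $|P[\phi]|\le P[|\phi|]$ on $\mathbb{D}$ together with $\bigl\||\phi|\bigr\|_{L^q}=\|\phi\|_{L^q}$ one may assume the boundary data is non-negative.

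So let $g\ge 0$, $g\in L^q(\mathbb{T})$, and set $f:=g^{q/p}\ge 0$. Since $q/p\ge 1$, $f\in L^p(\mathbb{T})$ with
\[\|f\|_{L^p(\mu)}^{\,p}=\|g\|_{L^q(\mu)}^{\,q},\qquad d\mu=\frac{dt}{2\pi},\]
so in particular $\|f\|_{L^p(\mu)}^{\,2p}=\|g\|_{L^q(\mu)}^{\,2q}$. Write $G:=P[g]$ and $F:=P[f]$. For each $z\in\mathbb{D}$ the operator $P_z[\cdot]$ is a probability average on $\mathbb{T}$ (the Poisson kernel integrates to one), and $x\mapsto x^{q/p}$ is convex on $[0,\infty)$, so Jensen's inequality gives
\[G(z)^{q/p}=\bigl(P_z[g]\bigr)^{q/p}\le P_z\bigl[g^{q/p}\bigr]=F(z),\qquad z\in\mathbb{D}.\]
Both sides are non-negative, hence raising to the $(2q)$-th power yields $G(z)^{2q}\le F(z)^{2p}$ pointwise in $\mathbb{D}$.

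Integrating over the disk and invoking the defining $L^p$-inequality for $f$ one obtains
\[\int_{\mathbb{D}}G^{2q}\,dA\le\int_{\mathbb{D}}F^{2p}\,dA\le K_p\,\|f\|_{L^p(\mu)}^{\,2p}=K_p\,\|g\|_{L^q(\mu)}^{\,2q},\]
which, combined with the reduction to non-negative data, says that (\ref{isoperimetric}) holds with exponent $q$ and constant $K_p$; by minimality of the best constant this forces $K_q\le K_p$, as required. There is no serious obstacle here: the whole argument rests only on the convexity of $x\mapsto x^{q/p}$ for $q\ge p\ge 1$ and on the fact that the Poisson kernel is a probability kernel. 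The reduction to $g\ge 0$ at the beginning is the only subtlety, needed in order to make sense of the substitution $f=g^{q/p}$ and to apply Jensen in the form stated; the identity $K_p=C_p^p$ in the statement plays no role in the proof and is a purely notational normalization.
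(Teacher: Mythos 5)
Your argument is correct and is essentially the paper's own: both proofs bound $|h(z)|$ by the Poisson integral of $|h|$, apply Jensen's inequality (with the convex map $x\mapsto x^{q/p}$, using that the Poisson kernel is a probability density) to pass from $P_z[|h|]^{q/p}$ to $P_z[|h|^{q/p}]$, and then invoke the defining inequality at exponent $p$ for the harmonic function $P[|h|^{q/p}]$; you merely phrase the reduction to non-negative boundary data explicitly. One small slip: after $G^{q/p}\le F$, you should raise to the $(2p)$-th power (not $(2q)$-th) to obtain $G^{2q}\le F^{2p}$, which is the inequality you actually use.
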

\begin{proof}
Let $q>p>1.$ From Jensen's inequality and the definition of $K_p$ we have
\begin{align*}\int_{\mathbb{D}} |f(z)|^{2q} d A(z)&\leq\int_{\mathbb{D}}\bigg(\frac{1}{2\pi}\int_{0}^{2\pi}\frac{1-r^2}{1-2r\cos(\theta-t)+r^2}|f(e^{\imath t})| dt\bigg)^{2p\cdot\frac{q}{p}}d A(z)\\
&\leq\int_{\mathbb{D}}\bigg(\frac{1}{2\pi}\int_{0}^{2\pi}\frac{1-r^2}{1-2r\cos(\theta-t)+r^2}|f(e^{\imath t})|^{\frac{q}{p}} dt\bigg)^{2p}d A(z) \\                                                           
&\leq K_p\bigg(\frac{1}{2\pi}\int_{0}^{2\pi}|f(e^{\imath t})^{\frac{q}{p}}|^p dt\bigg)^2=K_p\bigg(\frac{1}{2\pi}\int_{0}^{2\pi}|f(e^{\imath t})|^q dt\bigg)^2
\end{align*}
which gives $K_p\geq K_q.$
\end{proof}
In \cite{KALAJMESTROVIC} it is proved that  $C_2\leq \sqrt{\frac{\sqrt{2}+1}{\sqrt{2}}}$, while the maximum principle for harmonic functions gives $C_{\infty}=1$. Therefore, by our Lemma \ref{izoperimetrija} we get, for $p>2:$ 
$$C_p\leq \bigg(\frac{\sqrt{2}+1}{\sqrt{2}}\bigg)^{\frac{1}{p}}.$$
Note that the Riesz-Thorin interpolation theorem for the Poisson extension operator gives the same estimate!

For $1<p<2,$ in \cite{KALAJBAJRAMI} it is proved that
$$C_p\leq\bigg(\frac{\cos\frac{\pi}{4p}}{\cos\frac{\pi}{2p}}\bigg)^{2}. $$ We are not able to improve it further; however, we can see that the constant is the same in case of the real- and complex-valued harmonic functions.

I am grateful to David Kalaj who found several mistakes in an earlier version of the paper and to an anonymous refferee for useful remarks that improved the quality of the presentation. 

Remark. Data sharing not applicable to this article as no datasets were generated or analysed during the current study.

The (corresponding) author states that there is no conflict of interests.


\begin{thebibliography}{43}
	
\bibitem{BAERNSTEIN}
A. Baernstein, II, \text{Some sharp inequalities for conjugate functions}, Indiana Univ. Math. J. \textbf{27} (1978), 833-852.	
	
\bibitem{BANUELOSWANG}
R. Banuelos and G. Wang, \textit{Sharp inequalities for martingales with applications to the Beurling-Ahlfors transform and Riesz transforms}, Duke Math. J. \textbf{80} (1995), 575-600.	

\bibitem{BURKHOLDER}
D. L. Burkholder, \textit{Boundary value problems and sharp inequalities for martingale transforms}, Ann. Probab. \textbf{12} (1984), 647-702.

\bibitem{DAVIS}
B. Davis, \textit{On the weak type (1,1) inequality for conjugate functions}, Proc. Amer. Math. Soc. \textbf{44}, (1974), 307-311.

\bibitem{DINGGRAFAKOSZHU}
Y. Ding, L. Grafakos and K. Zhu, \textit{On the norm of the operator $aI+bH$ on $L^p(\mathbb{R})$}, Bull. Korean Math. Soc., \textbf{55}, Issue 4, (2018), 1209-1219. 
	
\bibitem{DUREN.BOOK}
P. Duren, \textit{Theory of $H^p$ spaces}, Academic Press,  New York and London, 1970.

\bibitem{ESSEN}
M. Essen, \textit{A superharmonic proof of M. Riesz conjugate function theorem}, Ark. Mat. \\ \textbf{22(1-2)} (1984), 241-249.

\bibitem{GARNETT.BOOK}
J. B. Garnett, \textit{Bounded Analytic Functions},  Springer, New York,  2007.

\bibitem{GOHBERGKRUPNIK}
I. Gohberg and N. Krupnik, \textit{Norm of the Hilbert transformation in the $L^p$ space}, Funct. Anal. Pril. \textbf{2} (1968), 91-92 [in Russian]; English transl. Funct. Anal. Appl. \textbf{2} (1968), 180-181. 

\bibitem{GK2}
I. Gohberg and N. Ya Krupnik, \textit{On the spectrum of singular integral operators in $L^p$ spaces}, Studia Math. \textbf{31} (1968), 347-362 (in Russian)

\bibitem{GK3}
I. Gohberg and N. Ya Krupnik, \textit{One-Dimensional Linear Singular Integral Equations}, Vol. II, Operator Theory: Advances and Appl., Vol.\textbf{54}, Birkh\"auser, Basel/Boston/Berlin, 1992.  

\bibitem{GRAFAKOS.MRL}
L. Grafakos,  \textit{Best bounds for the Hilbert transform on $L^p(R^1)$},  Math. Res. Lett. \textbf{4} (1997),  469-471.

\bibitem{HANGWANGYAN}
F. Hang, X. Wang and X. Yan, \textit{Sharp integral inequalities for harmonic functions}, Comm. Pure Appl. Math. \textbf{61(1)} (2008), 54-95. 

\bibitem{HORMANDER}
L. H\"ormander, \textit{Notions of convexity}, Modern Birkh\"auser Classics, Dordrecht: Springer, 2007. 

\bibitem{HKV}
B. Hollenbeck, N. J. Kalton and I. E. Verbitsky, \textit{Best constants for some opertaors associated with the Fourier and Hilbert transform}, Studia Math. \textbf{157} (2003), 237-278.

\bibitem{HV.JFA}
B. Hollenbeck and I.E. Verbitsky,  \textit{Best constants for the Riesz projection},  J. Funct. Anal. \textbf{175} (2000), 370--392.

\bibitem{HV.OTAA}
B. Hollenbeck and I.E. Verbitsky, \textit{Best constant inequalities involving the analytic and co-analytic projection},
Oper. Theory   Adv. Appl. \textbf{202} (2010), 285-295.

\bibitem{IWANIECMARTIN}
T. Iwaniec and G. Martin, \textit{Riesz transforms and related singular integrals}, J. Reine Angew. Math., \textbf{473} (1996), 25-57. 

\bibitem{JANAKIRAMAN}
P. Janakiraman, \textit{Best weak-type (p,p) constants, $1\leq p\leq 2$ for orthogonal harmonic functions and martingales}, Illinois J. Math. \textbf{48} (2004), 909-921.

\bibitem{KALAJBAJRAMI} 
D. Kalaj and E. Bajrami, \textit{On some Riesz and Carleman type inequalities for harmonic functions in the unit disk}, Comput. Methods Funct. Theory \textbf{18(2)} (2018), 295-305.

\bibitem{KALAJ.TAMS}
D. Kalaj, \textit{On Riesz type inequalities for harmonic mappings on the unit disk}, Trans. Amer. Math. Soc. \textbf{372} (2019), 4031--4051.

\bibitem{KALAJMESTROVIC}
D. Kalaj and R. Me\v{s}trovi\'{c}, \textit{An isoperimetric type inequality for harmonic functions}, J. Math. Anal. Appl. \textbf{373(2)} (2011), 439-448.

\bibitem{KAYUMOVPONNUSAMYKALIRAJ}
I. R. Kayumov, S. Ponnusamy and A. Sairam Kaliraj, \textit{Riesz-Fejer inequalities for harmonic functions}. Potential Anal. \textbf{52} (2020), 105-113. 

\bibitem{KOOSIS}
P. Koosis, \textit{"Introduction to $H^p$ spaces}, 2nd. ed. Cambridge Tracts in Math. Vol. \textbf{115}, Cambridge, University Press, Cambridge, UK, 1998. 

\bibitem{KRUPNIK}
N. Ya Krupnik, \textit{On the quotient norms of singular integral operators}, Mat, Issled. \textbf{10} (1975), 255-263 (in Russian)

\bibitem{KRVERB1}
N. Ya Krupnik and I. E. Verbitsky, \textit{The norm of the Riesz projections}, in "Linear and Complex Analysis Problem Book", Lecture Notes in Mathematics, Vol. \textbf{1043}, pp. 325-327, (1984), Springer-Verlag, Berlin/New York 

\bibitem{KRVERB2} 
N. Ya Krupnik and I. E. Verbitsky, \textit{Exact constants in theorems on the boundedness of singular operators with a weight and their applications}, Mat. Issled. \textbf{54} (1980), 21-35. (in Russian)

\bibitem{MELENTIJEVIC.PHD}
P. Melentijevi\'{c}, \textit{Estimates of gradients and operator norm estimates in harmonic function theory},
PhD thesis, Belgrade 2018.

\bibitem{MELENTIJEVICBOZIN}
P. Melentijevi\'{c} and V. Bo\v{z}in, \textit{Sharp Riesz-Fejer inequalities for harmonic Hardy spaces}, Potential Anal. \textbf{54(4)} (2021), 575-580.

\bibitem{MELENTIJEVICMARKOVIC}
P. Melentijevi\'{c} and M. Markovi\'{c}, \textit{Best constants in inequalities involving analytic and co-analytic projections and M. Riesz theorem for various function spaces}, Potential Anal (2022), https://doi.org/10.1007/s11118-022-10021-0

\bibitem{NEWMAN}
D. J. Newman, \textit{The nonexistence of projection of $L^1$ to $H^1$}, Proc. Amer. Math. Soc. \textbf{12}, (1961), 98-99.

\bibitem{OSEKOWSKI}
A. Osekowski, \textit{Sharp weak-type inequalities for Hilbert transform and Riesz projection},
Israel Journal of Mathematics \textbf{192} (2012), 429-448.

\bibitem{PAPADOPOULOS}
S. Papadopoulos, \textit{A note on the M. Riesz theorem for the conjugate functions},  Bul. Polish Acad. Sci. Math. \textbf{47} (1999), 283-288.

\bibitem{PAVLOVIC.BOOK}
M. Pavlovi\'{c}, \textit{Function theory in the unit disk}, DeGruyter, 2014.

\bibitem{PICHORIDES.STUDIA}
S.K. Pichorides, \textit{On the best values of the constants in the theorems of M. Riesz, Zygmund and Kolmogorov},
Stud.  Math. \textbf{44} (1972), 165--179.

\bibitem{RANGE}
R. M. Range, \textit{Holomorphic Functions and Integral Representations in Several Complex Variables}, Graduates Texts in Mathematics, Vol. \textbf{108}, Springer-Verlag, New York, 1986.

\bibitem{RIESZ}
M. Riesz, \textit{Sur les fonctions conjugees}, Math. Zeit. \textbf{27} (1927), 218-244.

\bibitem{RUDIN}
W. Rudin, \textit{Projections on invariant subspaces}, Proc. Amer. Math. Soc.  \textbf{13} (1962). no. 3, 429-432.

\bibitem{STEIN}
E. M. Stein, \text{Singular integrals and differentiability properties of functions}, Princeton Univ. Press, Princeton (1970)

\bibitem{TOMASZEWSKI}
B. Tomaszewski, \textit{Some sharp weak-type inequalities for holomorphic functions on the unit ball of $\mathbb{C}^n$}, Proc. Amer. Math. Soc \textbf{85} (1985), 271-274.

\bibitem{VASYUNINVOLBERG}
V. Vasyunin and A. Volberg, \textit{The Bellman function technique in harmonic analysis}, 
Cambridge studies in advanced mathematics, \textbf{186} (2020)

\bibitem{VERBITSKY.ISSLED}
I.E. Verbitsky,  \textit{Estimate of the norm of a function in a Hardy Space in terms of the norms of its real and imaginary parts},
Amer. Math. Soc. Transl.  \textbf{24} (1984),  11--15.

\bibitem{ZYGMUND}
A. Zygmund, \textit{Trigonometric series}, Vol. \textbf{2}, Cambridge University Press, London, (1968).

\end{thebibliography}
\end{document}